\documentclass[12pt,reqno]{amsart}

\usepackage[utf8]{inputenc}
\usepackage[T1]{fontenc}
\usepackage{amsmath,amssymb,amsthm,mathtools,verbatim}
\usepackage{enumitem}
\usepackage[colorlinks=true,linkcolor=blue,citecolor=blue,urlcolor=blue]{hyperref}

\theoremstyle{plain}
\newtheorem{theorem}{Theorem}[section]
\newtheorem{proposition}[theorem]{Proposition}
\newtheorem{lemma}[theorem]{Lemma}
\newtheorem{corollary}[theorem]{Corollary}

\theoremstyle{definition}
\newtheorem{definition}[theorem]{Definition}

\theoremstyle{remark}
\newtheorem{remark}[theorem]{Remark}

\newcommand{\N}{\mathbb N}
\newcommand{\R}{\mathbb R}

\newcommand{\K}{\mathcal K}
\newcommand{\A}{\mathcal A}
\newcommand{\Wiener}{\mathcal W}
\newcommand{\FBV}{\mathcal F^{\mathrm{BV}}}
\newcommand{\Finf}{\mathcal F^{\infty}}
\newcommand{\HS}{\mathcal S_2}
\newcommand{\Comm}{\mathfrak C}

\newcommand{\PV}{\operatorname{p.v.}}
\newcommand{\thetaM}{\widetilde\vartheta}
\newcommand{\eps}{\varepsilon}
\newcommand{\norm}[1]{\left\lVert #1\right\rVert}

\newcommand{\ol}[1]{\overline{#1}}
\DeclareMathOperator{\Var}{Var}

\DeclareMathOperator{\spec}{spec}

\newcommand{\SigmaC}{\Sigma_C}

\newcommand{\Toep}{\mathcal T}
\newcommand{\Comp}{\mathcal K}
\newcommand{\Kernels}{\mathcal A}

\numberwithin{equation}{section}
\allowdisplaybreaks

\title[A discrete Mellin calculus in the Toeplitz algebra]
      {A discrete Mellin calculus in the Toeplitz algebra}

\author{Carlo Bellavita}
\address{Dipartimento di Matematica ``F. Enriques'', Dipartimento di
Eccellenza MUR 2023--2027, Universit\`a degli Studi di Milano, Via
C.\ Saldini 50, 20133 Milano, Italy}
\email{carlo.bellavita@gmail.com}
\email{carlo.bellavita@unimi.it}

\author{Georgios Stylogiannis}
\address{Department of Mathematics, Aristotle University of
Thessaloniki, 54124 Thessaloniki, Greece}
\email{g.stylog@gmail.com}
\email{stylog@math.auth.gr}

\subjclass[2020]{Primary 47B35; Secondary 47B10, 44A35}
\keywords{Toeplitz algebra, Ces\`aro operator, Mellin transform,
Calkin algebra, Hilbert--Schmidt operator, multiplicative convolution}

\thanks{The first author is a member of the Gruppo Nazionale per
l'Analisi Matematica, la Probabilit\`a e le loro Applicazioni
(GNAMPA) of the Istituto Nazionale di Alta Matematica (INdAM)}

\begin{document}

\begin{abstract}
We prove that the classical Ces\`aro operator belongs to the Toeplitz
algebra, providing an independent solution to a question raised by
Barr\'ia and Halmos. Our approach is based on a discrete Mellin
calculus for the sampled-ratio matrices
\[
  W(\kappa)_{jk} = \frac{1}{j+1}\, \kappa\!\left(\frac{k+1}{j+1}\right).
\]
For a natural algebra of kernels $\Kernels$, we prove that this
quantization is multiplicative modulo Hilbert--Schmidt operators,
\[
  W(\kappa)W(\eta) - W(\kappa \star \eta) \in \mathcal S_2,
  \qquad \kappa,\eta \in \Kernels.
\]
We further show that every operator $W(\kappa)$, $\kappa \in
\Kernels$, belongs to the commutator ideal of the Toeplitz algebra.
Since the Ces\`aro operator corresponds to the kernel $\kappa =
\mathbf 1_{(0,1]}$, this resolves the Barr\'{\i}a--Halmos question as
a special case of the general framework.
\end{abstract}

\maketitle

\tableofcontents
\section{Introduction}
Let \(H^2\) denote the Hardy space of the unit disc, with standard
orthonormal basis \(e_n(z)=z^n\), \(n\ge0\).  For
\(b\in L^\infty(\mathbb T)\), the Toeplitz operator with symbol \(b\)
is defined by
\[
  T_b=PM_b|_{H^2},
\]
where \(M_b\) is multiplication by \(b\) on \(L^2(\mathbb T)\) and
\(
  P:L^2(\mathbb T)\longrightarrow H^2
\)
is the orthogonal projection.

We denote by
\[
  \Toep=C^*\{T_b:b\in L^\infty(\mathbb T)\}
\]
the Toeplitz algebra on \(H^2\), that is, the norm-closed \(C^*\)-algebra
generated by all Toeplitz operators.  The algebra \(\Toep\) contains
the ideal \(\Comp=\mathcal K(H^2)\) of compact operators
\cite{BH}.  We write
\[
  \pi:B(H^2)\longrightarrow B(H^2)/\Comp
\]
for the Calkin map and denote by \(\Comm\) the commutator ideal of
\(\Toep\).

Modulo \(\Comm\), every element of \(\Toep\) is represented by a
Toeplitz operator: more precisely, each \(T\in\Toep\) can be written
in the form
\[
  T=T_b+Q,
  \qquad b\in L^\infty(\mathbb T),\quad Q\in\Comm;
\]
see, for example, \cite{F}.

The starting point of this paper is a question of Barr\'{\i}a and
Halmos concerning the classical Ces\`aro operator
\begin{equation}\label{eq:cesaro-def}
  Ce_k = \sum_{j \ge k} \frac{1}{j+1}\, e_j, \qquad k \ge 0.
\end{equation}
In \cite{BH} they asked whether $C$ belongs to the Toeplitz algebra.
They also proved that $C$ is not a finite sum of finite products of
Toeplitz operators. Thus a positive solution cannot come from a finite
algebraic identity in Toeplitz operators: the norm closure in the
definition of $\Toep$ has to enter in an essential way. The problem
was still recorded as open in the recent monograph of Mashreghi and
Ross \cite{MR}. Our first result answers this question.

\begin{theorem}[Barr\'{\i}a--Halmos question]\label{thm:cesaro-main}
The classical Ces\`aro operator belongs to the commutator ideal of the
Toeplitz algebra. In particular,
\[
  C \in \Comm \subset \Toep.
\]
\end{theorem}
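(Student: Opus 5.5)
The plan is to realize $C$ as the sampled-ratio operator $W(\kappa)$ with $\kappa=\mathbf 1_{(0,1]}$, since $W(\kappa)_{jk}=\frac1{j+1}\mathbf 1_{\{k\le j\}}$, and then to show that $W(\kappa)$ lies in $\Comm$ for every $\kappa$ in a suitable kernel algebra $\Kernels$. I would run everything through the Mellin transform on the critical line,
\[
\widehat\kappa(\xi)=\int_0^\infty \kappa(t)\,t^{-1/2+i\xi}\,\frac{dt}{t},
\]
under which $\star$ becomes pointwise multiplication. For the Ces\`aro kernel the symbol is $\widehat\kappa(\xi)=1/(\tfrac12-i\xi)$, which is continuous on $\R$ and vanishes at $\pm\infty$.

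\emph{Step 1 (calculus modulo compacts).} First establish three facts about the map $\kappa\mapsto W(\kappa)$.
\begin{enumerate}[label=(\alph*)]
\item A Schur-test bound $\norm{W(\kappa)}\lesssim\int_0^\infty|\kappa(t)|\,t^{-1/2}\,\frac{dt}{t}$.
\item Multiplicativity $W(\kappa)W(\eta)-W(\kappa\star\eta)\in\HS$, by comparing the Riemann sum $\sum_m$ with the integral defining $\kappa\star\eta$ and estimating the error in Hilbert--Schmidt norm.
\item $W(\kappa)^*-W(\kappa^*)\in\HS$, where $\kappa^*(t)=\overline{\kappa(1/t)}/t$.
\end{enumerate}
Together these make $\kappa\mapsto\pi(W(\kappa))$ a $*$-homomorphism from the Mellin-symbol algebra into the Calkin algebra. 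The decisive refinement is the essential-norm estimate $\norm{\pi(W(\kappa))}\le\sup_\xi|\widehat\kappa(\xi)|$. I would prove it via the $C^*$-identity: apply the Schur bound to high convolution powers $(\kappa^*\star\kappa)^{\star n}$ and take $n$-th roots, in the spirit of a spectral radius argument.

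\emph{Step 2 (generators inside $\Comm$).} For $\varphi,\psi\in L^\infty(\mathbb T)$ with Hankel operators $H_\varphi,H_\psi$, we have
\[
T_{\bar\varphi\psi}-T_{\bar\varphi}T_\psi=H_{\varphi}^*H_{\psi}\in\Comm,
\]
and compact operators lie in $\Comm$. Choosing symbols whose Hankel matrices are, modulo $\HS$, of the form $W(\eta)$ with explicit $\eta$ gives a family of kernels in $\Kernels$ with $W(\eta)\in\Comm+\Comp=\Comm$. Examples are the Hilbert matrix $1/(j+k+1)$, whose kernel is $\eta(t)=1/(1+t)$, and its weighted variants $(j+1)^{a}(k+1)^{-a}/(j+k+1)$ with $|a|<1/2$, which still come from bounded symbols. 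The associated products then have Mellin symbols such as $\pi^2/\cosh^2(\pi\xi)$ and shifted versions $\pi^2/\bigl(\cosh(\pi(\xi-i a))\cosh(\pi(\xi+ia))\bigr)$, together with their mixed products.

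\emph{Step 3 (density and closure).} By Step 1, the set of $\xi$-symbols $\widehat\kappa$ for which $W(\kappa)\in\Comm$ is a $*$-subalgebra of $C_0(\R)$, and it is closed under uniform limits because of the essential-norm estimate and the fact that $\Comm$ is closed. I would then verify that the symbols from Step 2 separate points of $\R$ and vanish nowhere simultaneously. The even function $1/\cosh^2$ alone does not separate $\xi$ from $-\xi$, so a mixed product with $a\ne0$ is needed to supply an odd part. Stone--Weierstrass then gives all of $C_0(\R)$, and in particular $1/(\tfrac12-i\xi)$, so $C=W(\mathbf 1_{(0,1]})\in\Comm$ modulo a compact, hence $C\in\Comm$.

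The main obstacle I expect is the essential-norm estimate in Step 1. Without it, uniform approximation of Mellin symbols does not translate into norm approximation of operators, and this is exactly where the norm closure, unavoidable by the Barr\'{\i}a--Halmos obstruction, must enter. A secondary difficulty is that $\mathbf 1_{(0,1]}$ is discontinuous, so multiplicativity modulo $\HS$ must be checked for an algebra containing such kernels. I would handle this by approximating $\mathbf 1_{(0,1]}$ by smooth kernels, using the Schur bound together with the fact that the jump contributes only a Hilbert--Schmidt error along the diagonal.
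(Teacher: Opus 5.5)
\textbf{There is a genuine gap in Step~2.} Your Step~1 is essentially the paper's regular calculus: the Calkin--Schur estimate, the Hilbert--Schmidt product defect, and the spectral-radius bound $\|\pi(W(\kappa))\|\le\|\sigma_\kappa\|_\infty$. Step~2, however, fails exactly where you need the ``odd part.''

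The matrices $(j+1)^a(k+1)^{-a}/(j+k+1)$ with $a\ne0$ are not Hankel matrices, since their entries do not depend only on $j+k$. So they are not $H_\psi$ for any symbol, bounded or not. Moreover, no other choice of Hankel operators lying in the Mellin fibre can help:
\begin{itemize}
\item Every Hankel matrix is symmetric. On the other hand, $W(\eta)^T=W(\eta^T)$ with $\eta^T(u)=\eta(1/u)/u$ and $\sigma_{\eta^T}(\xi)=\sigma_\eta(-\xi)$.
\item So if a Hankel matrix agrees with $W(\eta)$ modulo compacts, then $\pi(W(\eta))=\pi(W(\eta^T))$. Injectivity of the Calkin Mellin calculus (it is in fact isometric) then forces $\sigma_\eta$ to be even.
\item Every semicommutator $H_\varphi^*H_\psi\equiv W(\eta_1)^*W(\eta_2)$ obtained this way has the even symbol $\overline{\sigma_{\eta_1}}\sigma_{\eta_2}$. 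The same holds for everything in the closed $*$-algebra these generate.
\end{itemize}
Even the ``shifted'' symbol you wrote, $\pi^2/(\cosh(\pi(\xi-ia))\cosh(\pi(\xi+ia)))$, is even in $\xi$. Since $\sigma_{\omega_C}(\xi)=1/(\tfrac12+i\xi)$ is not even, the Stone--Weierstrass argument in Step~3 cannot reach $C$.

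\textbf{What is missing is a mechanism that breaks the reflection $\xi\mapsto-\xi$.} In the paper this is the sawtooth Toeplitz operator itself, $T_b=W(\omega_b)+\tfrac12 I$ with $\omega_b(u)=1/(2\pi i(1-u))$. Its principal-value symbol $-\tfrac12\tanh(\pi\xi)$ is odd. Because $\omega_b\notin\Wiener$, your Step~1 does not cover it. The paper therefore proves a separate principal-value intertwining estimate, $W(\omega_b)W(\kappa)-W(\omega_b\star\kappa)\in\HS$ for smooth weighted $\kappa$. This shows that $\pi(T_b)$ acts on the Mellin fibre as multiplication by $\thetaM=\tfrac12-\tfrac12\tanh(\pi\xi)$.

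Combined with the polynomial defect $p(T_b)-T_{p\circ b}\equiv-\tfrac1{4\pi^2}q(T_b)\Gamma^2$, this places $\Lambda(p\circ\thetaM)$ in $\pi(\Toep)$ for every $p$ with $p(0)=p(1)=0$. These functions form a dense family, which gives $C\in\Toep$. The upgrade to $C\in\Comm$ then follows from compactness of $CT_{1-z}$ and the Toeplitz symbol map.

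If you want to stay with semicommutators, you need Hankel operators that are \emph{not} themselves in the Mellin fibre. One candidate is the oscillating symbol $\psi=(1-\bar z)^{i\gamma}$, $\gamma\neq0$:
\begin{itemize}
\item Up to $\HS$, its Hankel matrix is $c_\gamma(j+k+1)^{-1-i\gamma}$ with a nonzero constant $c_\gamma$. This is a $W$-matrix twisted on both sides by the diagonal unitary $\mathrm{diag}((j+1)^{-i\gamma/2})$.
\item Then $H_\psi^*H_\psi$ has symbol proportional to $\operatorname{sech}(\pi\xi)\operatorname{sech}(\pi(\xi-\gamma))$, which is not even.
\end{itemize}
That would be a genuinely different route, but it requires handling such twisted matrices, which your proposal does not do.

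\textbf{Two details in Step~1 also need fixing.}
\begin{itemize}
\item The Mellin weight should be $t^{-1/2}\,dt$, not $t^{-1/2}\,dt/t$. With your normalization the symbol of $\mathbf 1_{(0,1]}$ diverges.
\item The Schur bound by $\int|\kappa(t)|t^{-1/2}\,dt$ holds only for the Calkin norm, and only under a regularity condition. This is because $W(\kappa)_{00}=\kappa(1)$ is not controlled by that integral.
\end{itemize}
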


\subsection{The sampled-ratio family}
Rather than seeking a direct representation of the Ces\`aro operator
in terms of Toeplitz operators, we embed it into the larger family of
sampled-ratio matrices
\begin{equation}\label{eq:Wkappa-def}
  W(\kappa)_{jk} = \frac{1}{j+1}\, \kappa\!\left(\frac{k+1}{j+1}\right),
  \qquad j,k \ge 0,
\end{equation}
where $\kappa$ is a pointwise defined function.
At this stage $W(\kappa)$ is regarded only as a pointwise-defined
matrix; it need not define a bounded operator on $H^2$. The Ces\`aro
operator is precisely the member of this family corresponding to
\[
  \omega_C = \mathbf 1_{(0,1]}, \qquad C = W(\omega_C).
\]
Thus the original membership problem for $C$ becomes part of a more
general question: when does a sampled Mellin matrix $W(\kappa)$ belong
to the Toeplitz algebra?

For comparison, consider the continuous dilation operator on
$L^2(0,\infty)$,
\[
  (W_c(\kappa) f)(x) = \int_0^\infty \kappa(u)\, f(xu)\, du.
\]
For suitable kernels, operator composition is represented exactly by
Mellin convolution,
\[
  W_c(\kappa) W_c(\eta) = W_c(\kappa \star \eta), \qquad
  (\kappa \star \eta)(w) = \int_0^\infty \kappa(u)\, \eta\!\left(\frac{w}{u}\right) \frac{du}{u}.
\]
After passing to logarithmic coordinates
\begin{equation}\label{eq:log-coords}
  (Uf)(t) = e^{t/2} f(e^t), \qquad f_\kappa(t) = e^{t/2}\kappa(e^t),
\end{equation}
the operators $W_c(\kappa)$ become translation-convolution operators,
\[
  (UW_c(\kappa)U^{-1} g)(t) = \int_{\mathbb R} f_\kappa(s)\, g(t+s)\, ds.
\]
With the Fourier convention
$\widehat h(\xi) = \int_{\mathbb R} h(t) e^{i\xi t}\, dt$, this operator
is diagonalized by the multiplier $\widehat{f_\kappa}(-\xi)$;
equivalently, up to the harmless reflection $\xi \mapsto -\xi$, the
corresponding Mellin symbol is
\[
  \sigma_\kappa(\xi) = \int_0^\infty \kappa(u)\, u^{-1/2+i\xi}\, du.
\]
Thus the continuous calculus is exactly multiplicative and is
diagonalized by the Mellin transform. The situation changes after
discretization: for the sampled-ratio matrices $W(\kappa)$, exact
multiplicativity generally fails. One of the main points of this paper
is to construct an algebra of kernels $\Kernels$ for which
\[
  W(\kappa) W(\eta) - W(\kappa \star \eta) \in \mathcal S_2, \qquad
  \kappa, \eta \in \Kernels,
\]
so that the multiplicative structure is recovered modulo
Hilbert--Schmidt operators $\mathcal{S}_2$. For $m \ge 0$ we introduce weighted
bounded-variation classes $\FBV_m$, controlled by the weight
$\mu_m(t) = (1+|t|)^m e^{-|t|/2}$, and set $\Kernels = \bigcup_{m\ge0}
\FBV_m$; the precise definition is given in Section~\ref{sec:algebra}.
We also set $\Sigma_C = \{w \in \mathbb C : |w-1| = 1\}$.

\subsection{The discrete Mellin--Toeplitz calculus}
The next theorem summarizes our main result. In particular, it implies
that $\pi(C)$ is normal with spectrum $\Sigma_C$, so that the
continuous functional calculus for $\pi(C)$ is available.

\begin{theorem}[Discrete Mellin--Toeplitz calculus]\label{thm:intro-main}
The class $\Kernels$ is a commutative complex algebra for
multiplicative convolution and is closed under the involution
$\kappa^\vee(u) = \overline{\kappa(1/u)}/u$. Every $W(\kappa)$, $\kappa \in
\Kernels$, defines a bounded operator on $H^2$, and
\begin{equation}\label{eq:intro-HS}
  W(\kappa)W(\eta) - W(\kappa \star \eta) \in \mathcal S_2, \qquad
  \kappa,\eta \in \Kernels.
\end{equation}
Moreover every $W(\kappa)$ belongs to $\Comm \subset \Toep$. The
Mellin symbol extends to an isometric $*$-isomorphism
\[
  \Lambda : C_0(\mathbb R) \longrightarrow \mathcal I_C \subset \pi(\Toep),
  \qquad \Lambda(\sigma_\kappa) = \pi(W(\kappa)),\ \kappa \in \Kernels.
\]
After identifying $\spec(\pi(C)) = \Sigma_C$, the range is
\[
  \mathcal I_C = \{F(\pi(C)) : F \in C(\Sigma_C),\ F(0) = 0\}.
\]
Consequently,
\[
  \|W(\kappa)\|_{\mathrm{ess}} = \|\sigma_\kappa\|_\infty, \qquad
  \spec_{\mathrm{ess}}(W(\kappa)) = \overline{\sigma_\kappa(\mathbb R)},
\]
and $[W(\kappa),W(\eta)] \in \mathcal S_2$,
$[W(\kappa)^*,W(\kappa)] \in \mathcal S_2$.
\end{theorem}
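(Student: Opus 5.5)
The plan is to establish the statement in four stages: algebraic properties of $\Kernels$, boundedness and the Hilbert--Schmidt product formula, membership in $\Comm$, and finally the $C^*$-algebraic identification of the range. Throughout, the basic device is the logarithmic change of variables \eqref{eq:log-coords}. On the kernel side, $\kappa\mapsto f_\kappa(t)=e^{t/2}\kappa(e^t)$ turns $\star$ into ordinary convolution on $\R$ and the involution $\kappa^\vee$ into $f_\kappa\mapsto\overline{f_\kappa(-\cdot)}$. Membership in $\FBV_m$ means roughly that $f_\kappa$ has bounded variation with weight $(1+|t|)^m$ and decays like $\mu_m$.

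\emph{Step 1 (algebraic properties).} I will show that $\FBV_m\star\FBV_{m'}\subset\FBV_{m+m'+1}$. This follows from Young-type inequalities for variation: $\Var(f*g)\le \Var(f)\,\|g\|_{L^1}$, combined with the polynomial weights, using $(1+|t|)\le(1+|s|)(1+|t-s|)$. Commutativity is inherited from convolution on $\R$. Closure under $\vee$ is immediate, since $\mu_m$ is even.

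\emph{Step 2 (boundedness and \eqref{eq:intro-HS}).} Conjugate by the isometry $e_j\mapsto$ the normalized indicator of $[\log(j+1),\log(j+2))$, or more simply compare $W(\kappa)$ with a Schur-type integral operator. For boundedness I will use the Schur test with weights $(j+1)^{-1/2}$:
\[
\sum_k \frac{1}{j+1}\Bigl|\kappa\Bigl(\frac{k+1}{j+1}\Bigr)\Bigr|\Bigl(\frac{k+1}{j+1}\Bigr)^{-1/2}
\]
is a Riemann sum for $\int|\kappa(u)|u^{-1/2}\,du=\|f_\kappa\|_{L^1}$. Bounded variation controls the sampling error uniformly in $j$.

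For the product formula, compute
\[
(W(\kappa)W(\eta))_{jl}=\sum_k \frac{1}{(j+1)(k+1)}\,\kappa\Bigl(\frac{k+1}{j+1}\Bigr)\eta\Bigl(\frac{l+1}{k+1}\Bigr).
\]
This is a Riemann sum, in the variable $u=(k+1)/(j+1)$ with mesh $1/(j+1)$, for $\frac{1}{j+1}(\kappa\star\eta)\bigl(\frac{l+1}{j+1}\bigr)$. The BV hypothesis bounds the error by
\[
\frac{1}{(j+1)^2}\,\times\text{(variation of the integrand near the ratio } (l+1)/(j+1)\text{)}.
\]
The resulting error matrix $E_{jl}$ then satisfies $\sum_{j,l}|E_{jl}|^2<\infty$: for fixed $j$, the sum over $l$ gives $\lesssim (j+1)^{-2}\cdot (j+1)\cdot\|\text{weighted variation}\|^2$, and summing over $j$ gives $\sum_j (j+1)^{-1-\epsilon}$-type convergence. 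The weights $\mu_m$ supply the extra decay at $u\to0,\infty$. \textbf{This error estimate is the main obstacle.} A naive bound gives only $\sum_j 1/j$, which diverges logarithmically. To get around this I would exploit cancellation: the error in a midpoint/BV Riemann sum is of size $(\text{mesh})\cdot\Var$. I would sum the variation over a partition that is adapted dyadically in $l/j$, so that the $\ell^2$ norm in $l$ costs only $(j+1)^{-1/2}$ times a square-summable profile. If that fails, I would first prove \eqref{eq:intro-HS} for a dense subclass (smooth, compactly supported $f_\kappa$) and extend by continuity in the $\FBV_m$ norm, using the Step 2 bound in $\HS$-norm.

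\emph{Step 3 (membership in $\Comm$, and the remaining assertions).} Using Theorem~\ref{thm:cesaro-main}, $C=W(\mathbf 1_{(0,1]})\in\Comm$, and $\Comm$ is a closed ideal containing $\Comp\supset\HS$. The symbol of $\omega_C$ is $\sigma(\xi)=1/(1/2+i\xi)$, which separates points of $\R$ and vanishes at infinity. Polynomials in $\sigma,\bar\sigma$ without constant term are therefore dense in $C_0(\R)$ by Stone--Weierstrass. By Step 2, $\pi$ of the corresponding polynomials in $C,C^*$ equals $\pi(W(\kappa_p))$ for kernels $\kappa_p\in\Kernels$ built by $\star$ and $\vee$. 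To pass to a general $\kappa$ I need the norm identity $\|\pi(W(\kappa))\|=\|\sigma_\kappa\|_\infty$. The upper bound comes from approximating via a Fourier-type argument on the log scale, where the operator is asymptotically the convolution operator. The lower bound comes from testing on vectors $x_{N,\xi}=\sum_{N\le k\le N^2}(k+1)^{-1/2+i\xi}e_k/\sqrt{\log N}$, which converge weakly to $0$ and satisfy $\|W(\kappa)x_{N,\xi}-\sigma_\kappa(\xi)x_{N,\xi}\|\to0$. The identity makes $\Lambda$ isometric on a dense set; it therefore extends to the $*$-isomorphism onto the $C^*$-algebra generated by $\pi(C)$ without unit. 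Hence $W(\kappa)\in\pi^{-1}(\pi(\Comm))=\Comm$.

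The range description $\mathcal I_C$ follows because $\sigma(\R)\cup\{0\}=\Sigma_C$, so that $C_0(\R)\cong\{F\in C(\Sigma_C):F(0)=0\}$ via $F\mapsto F\circ\sigma$. The essential norm and spectrum formulas then follow from the isomorphism. The commutator statements follow from \eqref{eq:intro-HS}, commutativity of $\star$, and the identity $W(\kappa)^*=W(\kappa^\vee)$ modulo $\HS$; that identity is checked entrywise, since the two matrices differ only by the factor $(j+1)/(k+1)$ and its Riemann-sum error.
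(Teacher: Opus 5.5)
There are two genuine gaps. The first is the Hilbert--Schmidt product formula, which your Step~2 leaves unproved. The obstacle is also misdiagnosed: no cancellation or dyadic regrouping is needed. The crude bound (mesh)$\times$(variation) is already square-summable once you track how the variation depends on the ratio, instead of bounding it uniformly in $l$.

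Write $a=j+1$, $b=l+1$, $\beta=b/a$ and $H_\beta(s)=\kappa(s)\eta(\beta/s)/s$. Then $[W(\kappa)W(\eta)]_{jl}=a^{-2}\sum_{c\ge1}H_\beta(c/a)$ and $[W(\kappa\star\eta)]_{jl}=a^{-1}\int_0^\infty H_\beta(s)\,ds$. So the defect is at most $a^{-2}\Var_{[1/a,\infty)}H_\beta+a^{-1}\int_0^{1/a}|H_\beta(s)|\,ds$. The head term must be controlled too; it is absent from your Riemann-sum picture because no sample point $c/a$ lies in $(0,1/a)$.

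In logarithmic coordinates, $H_\beta(e^x)=e^{-y/2-x}f_\kappa(x)f_\eta(y-x)$ with $y=\log\beta$. The identity $|x|+|y-x|=|y|+2\operatorname{dist}(x,[y,0])$ shows that, for $b\le a$, the variation is at most $a/b$ and the head integral at most $1/b$, up to logarithmic factors. Hence $|E_{jl}|\lesssim(1+\log a)^q(1+\log b)^q/(ab)$. The case $b>a$ follows from $\Delta(\kappa,\eta)^*=\Delta(\eta^\vee,\kappa^\vee)$, where $\Delta(\kappa,\eta)=W(\kappa)W(\eta)-W(\kappa\star\eta)$. The row sums $\sum_l|E_{jl}|^2$ are then $O\bigl((1+\log a)^{2q}a^{-2}\bigr)$, not $O(a^{-1})$.

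Your fallback cannot replace this estimate. Step~2 yields no $\HS$-norm bound, only operator-norm control (at best $\|\pi(W(\kappa))\|\le\|\kappa\|_{\Wiener}$). Since $\HS$ is not closed in operator norm, approximation by smooth compactly supported profiles shows at most that the defect is compact. Without the quantitative entry bound, every $\HS$ assertion in the statement remains unproved, including the two commutator claims.

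The second gap is the membership $W(\kappa)\in\Comm$. You take $C\in\Comm$ from Theorem~\ref{thm:cesaro-main}, but that theorem is the case $\kappa=\omega_C$ of the assertion under proof. The paper derives it from exactly the Toeplitz-side machinery your proposal lacks; as written, nothing in your argument ever produces an element of $\Toep$.

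The missing link is the sawtooth symbol $b(e^{i\theta})=1-\theta/(2\pi)$, for which $T_b=W(\omega_b)+\frac12 I$ with the singular kernel $\omega_b(u)=1/(2\pi i(1-u))$. The paper's chain is as follows.
\begin{enumerate}
\item The semi-commutator identity $T_bT_c-T_{bc}=-H_{\bar b}^*H_c$, together with $H_{b^m}=-\frac1{2\pi i}\Gamma+R_m$, $R_m\in\HS$, gives $p(T_b)-T_{p\circ b}=-\frac1{4\pi^2}q(T_b)\Gamma^2+K_p$ with $K_p\in\HS$, whenever $p(v)=v(v-1)q(v)$.
\item A principal-value intertwining estimate $W(\omega_b)W(\kappa)-W(\omega_b\star\kappa)\in\HS$ along the orbit $\kappa_0=\omega_\Gamma^{\star2}$, $\kappa_{r+1}=\omega_b\star\kappa_r+\frac12\kappa_r$, yields $\pi(T_b)\Lambda(g)=\Lambda(\thetaM g)$.
\item Hence $\pi(p(T_b)-T_{p\circ b})=\Lambda(p\circ\thetaM)$, and density of these symbols in $C_0(\R)$ gives $\Lambda(C_0(\R))\subset\pi(\Toep)$.
\item Only then is $C\in\Comm$ extracted, from $CT_{1-z}=\operatorname{diag}(1/(n+1))$ and the symbol map.
\end{enumerate}

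If you import $C\in\Comm$ from an independent source such as Sang's identity, the rest of your Step~3 is sound. Your reduction $\pi(W(\kappa))\in C^*(\pi(C))\subset\pi(\Comm)$, hence $W(\kappa)\in\Comm$ because $\Comp\subset\Comm$, is correct. It is slightly more direct than the paper's passage through $\overline\varsigma$. Your Weyl-sequence lower bound is likewise a legitimate substitute for the appeal to Young's theorem on $\spec_{\mathrm{ess}}(C)$.

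Three smaller points. The matching upper bound $\|\pi(W(\kappa))\|\le\|\sigma_\kappa\|_\infty$ is left vague; the paper gets it from normality of $\pi(W(\kappa))$, the spectral radius formula, the Calkin--Schur bound applied to $\kappa^{\star n}$, and Gelfand theory of $L^1(\R)$. Also, $W(\kappa)^*=W(\kappa^\vee)$ is an exact entrywise identity, with no Riemann-sum error: the factor $1/u$ in $\kappa^\vee$ absorbs $(j+1)/(k+1)$. Finally, in Step~1 the inequality $(1+|t|)\le(1+|s|)(1+|t-s|)$ points the wrong way to produce the index $m+m'+1$. The extra power comes from the length of the segment between $0$ and $t$, on which $e^{-|s|/2}e^{-|t-s|/2}=e^{-|t|/2}$, as in Lemma~\ref{lem:weight-conv}.
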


Theorem~\ref{thm:cesaro-main} is the special case $\kappa = \omega_C$
of Theorem~\ref{thm:intro-main}. The discrete Mellin calculus is
therefore not an auxiliary reformulation of the Ces\`aro problem, but
the structural mechanism through which its solution is obtained.

\subsection{Overview of the argument}
The proof splits into two parts of a genuinely different nature, and
it is worth outlining both before entering the technical sections.

The \emph{regular} part of the argument (Sections~\ref{sec:mellin-model}--\ref{sec:Calkin})
constructs the algebra $\Kernels$ and proves~\eqref{eq:intro-HS}. The
difference $W(\kappa)W(\eta) - W(\kappa\star\eta)$ is rewritten
entry by entry as a lattice sum minus an integral, and a localized
Euler--Stieltjes estimate controls this difference by weighted
pointwise variation. The resulting bounds are square-summable, which
gives~\eqref{eq:intro-HS} and produces a $C_0(\mathbb R)$ Mellin
calculus in the Calkin algebra.

At the end of this first part one has only constructed an abstract
Calkin algebra of sampled Mellin operators: nothing yet places it
inside the Calkin image of the Toeplitz algebra. This is where the
\emph{singular} part of the argument (Sections~\ref{sec:L3}--\ref{sec:barria-halmos})
takes over. Its key object is the Toeplitz operator $T_b$ associated
with the sawtooth symbol $b$. A Toeplitz--Hankel semi-commutator
computation, combined with a principal-value intertwining relation for
the singular kernel underlying $T_b$, shows that
\[
  \Lambda(C_0(\mathbb R)) \subset \pi(\Toep).
\]
Since $\pi(C) \in \Lambda(C_0(\mathbb R))$, this gives $C \in \Toep$,
and a final application of the Toeplitz symbol map upgrades this to
the stronger conclusion $C \in \Comm$. Section~\ref{sec:full-calculus}
then collects the consequences of the full construction, stated
already as Theorem~\ref{thm:intro-main} above, and
Section~\ref{sec:concluding} closes with some natural questions left
open by the method.

The continuous background belongs to classical Mellin/Wiener--Hopf and
singular-integral theory; see, for example, Duduchava~\cite{Duduchava}.
On the Toeplitz side, the Gohberg--Krupnik local symbol calculus and
its later treatments involve a hyperbolic Mellin fibre for
piecewise-continuous symbols~\cite{GK,BSK}. There are also notions of
discrete Mellin convolution of a divisor-based type~\cite{Plaschinsky}.
These theories provide important background for the present
construction, but they do not directly yield the sampled-ratio
Hilbert--Schmidt defect estimates required here.

\subsection{Relation with Sang's result}\label{sec:sang}
A recent preprint of Sang~\cite{Sang} gives another affirmative
solution to the Barr\'{\i}a--Halmos question by a different method. The
argument developed in the present paper is independent of Sang's
approach. Sang proves an exact identity of the form
\[
  C = (I+S^*)\, g(T_{\chi_{T_-}}), \qquad
  g(u) = \left[1 + \frac{2i}{\pi}\operatorname{arctanh}(2u-1)\right]^{-1},
\]
with $g(0) = g(1) = 0$, $T_- = \{e^{i\theta} : \pi < \theta <
2\pi\}$, and $S^*$ the backward shift operator. Since $T_{\chi_{T_-}}$ is self-adjoint with spectrum $[0,1]$,
continuous functional calculus gives $C \in \Toep$; the endpoint
condition places $C$ in the commutator ideal as well.

The two approaches solve the same membership problem by genuinely
different mechanisms. Sang's formula is an exact and particularly
short representation of the single operator $C$. In the present paper,
by contrast, the Ces\`aro problem is used as the entry point to a
discrete quantization in which multiplication is no longer exact. The
central analytic statement is the family-level formula
\[
  W(\kappa)W(\eta) = W(\kappa\star\eta) \pmod{\mathcal S_2}, \qquad
  \kappa,\eta \in \Kernels.
\]
The following consequences do not follow  directly from Sang's single-operator identity:
\begin{itemize}
  \item the isometric $*$-isomorphism $\Lambda : C_0(\mathbb R) \to
    \mathcal I_C$, identifying the Calkin classes of \emph{all}
    $W(\kappa)$, $\kappa \in \Kernels$, at once;
  \item the essential norm and essential spectrum formulas
    $\|W(\kappa)\|_{\mathrm{ess}} = \|\sigma_\kappa\|_\infty$,
    $\spec_{\mathrm{ess}}(W(\kappa)) = \overline{\sigma_\kappa(\mathbb R)}$, valid
    for every $\kappa \in \Kernels$, not only for $\kappa = \omega_C$;
  \item the Hilbert--Schmidt commutator estimates $[W(\kappa),W(\eta)]$,\\
    $[W(\kappa)^*,W(\kappa)] \in \mathcal S_2$ for arbitrary pairs in
    $\Kernels$;
  \item the analogous essential norm and spectrum computation for the
    classical Hilbert matrix, obtained later from the same Mellin
    calculus (Corollary~\ref{cor:cesaro-hilbert}).
\end{itemize}
Thus the present argument leads not only to $C \in \Toep$, but to a
family-level Calkin calculus and to the membership of every
$W(\kappa)$, $\kappa \in \Kernels$, in the commutator ideal.

\part{Discrete Mellin calculus}
\section{From the Ces\`aro matrix to the discrete Mellin model}\label{sec:mellin-model}

We now begin the proof of Theorem~\ref{thm:cesaro-main}, following the
route outlined in the introduction. This section embeds the Ces\`aro
matrix into the sampled family~\eqref{eq:Wkappa-def}, records the
Mellin operations governing this family, and introduces the three
kernels --- $\omega_C$, $\omega_\Gamma$, and $\omega_b$ --- that will
later connect it with Toeplitz and Hankel operators.

For Borel functions $\kappa,\eta : (0,\infty) \to \mathbb C$ define
multiplicative convolution by
\begin{equation}\label{eq:mult-conv}
  (\kappa \star \eta)(w) = \int_0^\infty \kappa(u)\, \eta\!\left(\frac{w}{u}\right) \frac{du}{u},
\end{equation}
whenever the integral is absolutely convergent. The Mellin symbol is
\begin{equation}\label{eq:mellin-symbol}
  \sigma_\kappa(\xi) = \int_0^\infty \kappa(u)\, u^{-1/2+i\xi}\, du,
  \qquad \xi \in \mathbb R.
\end{equation}
We use the involution
\begin{equation}\label{eq:involution}
  \kappa^\vee(u) = \frac{\overline{\kappa(1/u)}}{u}.
\end{equation}
A direct entrywise calculation shows that the formal adjoint matrix of
$W(\kappa)$ is $W(\kappa^\vee)$: indeed,

   \[
   \overline{W(\kappa)_{kj}}
   =\frac1{k+1}\overline{\kappa\!\left(\frac{j+1}{k+1}\right)}
   =W(\kappa^\vee)_{jk}.
   \]

so that, whenever $W(\kappa)$ defines a bounded operator,
\begin{equation}\label{eq:adjoint}
  W(\kappa)^* = W(\kappa^\vee).
\end{equation}

Introduce the Wiener class
\begin{equation}\label{eq:wiener-class}
  \Wiener = \left\{ \kappa : \|\kappa\|_{\Wiener} = \int_0^\infty
  |\kappa(u)|\, u^{-1/2}\, du < \infty \right\}.
\end{equation}

\begin{proposition}[Logarithmic dictionary]\label{prop:dictionary}
The map $\kappa \mapsto f_\kappa$ from~\eqref{eq:log-coords} identifies
$\Wiener$, modulo equality almost everywhere, isometrically with
$L^1(\mathbb R)$. More precisely,
\[
  \|\kappa\|_{\Wiener} = \|f_\kappa\|_{L^1}, \qquad
  f_{\kappa \star \eta} = f_\kappa * f_\eta, \qquad
  \sigma_\kappa(\xi) = \widehat{f_\kappa}(\xi) = \int_{\mathbb R} f_\kappa(t) e^{i\xi t}\, dt.
\]
Moreover $f_{\kappa^\vee}(t) = \overline{f_\kappa(-t)}$ and
$\sigma_{\kappa^\vee} = \overline{\sigma_\kappa}$.
\end{proposition}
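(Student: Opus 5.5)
The whole statement follows from the substitution $u=e^{t}$, $du=e^{t}\,dt$, which carries $(0,\infty)$ with the measure $u^{-1/2}du$ onto $\mathbb R$ with $e^{t/2}dt$. I would check each identity after this change of variables, and only then deal with the "modulo a.e." identification.

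\emph{Isometry and bijectivity.} Substituting $u=e^t$ gives
\[
\int_0^\infty|\kappa(u)|u^{-1/2}\,du=\int_{\mathbb R}|\kappa(e^t)|e^{-t/2}e^{t}\,dt=\int_{\mathbb R}|f_\kappa(t)|\,dt .
\]
The map $\kappa\mapsto f_\kappa$ is linear. Its inverse is $\kappa(u)=u^{-1/2}f(\log u)$, so it is onto $L^1$. Since $t\mapsto e^t$ is a diffeomorphism, it sends null sets to null sets, and $\kappa=\eta$ a.e. if and only if $f_\kappa=f_\eta$ a.e.

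\emph{Convolution.} Take $\kappa,\eta\in\Wiener$. The function $(u,w)\mapsto|\kappa(u)\eta(w/u)|u^{-1}w^{-1/2}$ is integrable, because after the same substitution its integral equals $\|f_\kappa\|_1\|f_\eta\|_1$. Tonelli therefore shows that $\kappa\star\eta$ is defined a.e. and lies in $\Wiener$. Then set $w=e^t$ and $u=e^s$ (so $du/u=ds$):
\[
e^{t/2}(\kappa\star\eta)(e^t)=\int_{\mathbb R}e^{s/2}\kappa(e^s)\,e^{(t-s)/2}\eta(e^{t-s})\,ds=(f_\kappa*f_\eta)(t),
\]
using $e^{t/2}=e^{s/2}e^{(t-s)/2}$.

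\emph{Symbol.} With $u=e^t$ one has $u^{-1/2+i\xi}du=e^{t/2}e^{i\xi t}dt$, so $\sigma_\kappa(\xi)=\int f_\kappa(t)e^{i\xi t}\,dt=\widehat{f_\kappa}(\xi)$.

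\emph{Involution.} $f_{\kappa^\vee}(t)=e^{t/2}\,\overline{\kappa(e^{-t})}\,e^{-t}=\overline{e^{-t/2}\kappa(e^{-t})}=\overline{f_\kappa(-t)}$, and $\kappa^\vee\in\Wiener$ because $\|f_{\kappa^\vee}\|_1=\|f_\kappa\|_1$. Then
\[
\widehat{f_{\kappa^\vee}}(\xi)=\int\overline{f_\kappa(-t)}e^{i\xi t}\,dt=\overline{\int f_\kappa(s)e^{i\xi s}\,ds}=\overline{\sigma_\kappa(\xi)}.
\]

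None of these steps is hard. The one point needing care is the convolution identity: one must justify that the integral in~\eqref{eq:mult-conv} converges absolutely for a.e. $w$, which is exactly the Tonelli argument on $|f_\kappa|*|f_\eta|$ above, and keep track of where the weight factor $e^{t/2}$ splits between the two factors.
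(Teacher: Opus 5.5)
Your proposal is correct and follows the same route as the paper: every identity comes from the substitution $u=e^t$ (with $u=e^s$, $w=e^t$ for the convolution), and the involution uses the reflection $s=-t$. You also spell out two points the paper leaves implicit: the Tonelli argument showing that $\kappa\star\eta$ is defined almost everywhere and lies in $\Wiener$, and the explicit inverse $f\mapsto u^{-1/2}f(\log u)$, which gives surjectivity onto $L^1(\mathbb R)$.
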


\begin{proof}
Use $u = e^t$ in~\eqref{eq:wiener-class} and~\eqref{eq:mellin-symbol}.
The convolution identity follows from the same change of variables
in~\eqref{eq:mult-conv}. For the involution, $f_{\kappa^\vee}(t) =
e^{t/2}\kappa^\vee(e^t) = e^{-t/2}\overline{\kappa(e^{-t})} =
\overline{f_\kappa(-t)}$, which is immediate from~\eqref{eq:involution}
since $e^{-t/2}$ is real. Taking Fourier transforms,
\[
  \sigma_{\kappa^\vee}(\xi) = \int_{\mathbb R} \overline{f_\kappa(-t)}\, e^{i\xi t}\, dt
  = \overline{\int_{\mathbb R} f_\kappa(-t)\, e^{-i\xi t}\, dt}
  = \overline{\int_{\mathbb R} f_\kappa(s)\, e^{i\xi s}\, ds} = \overline{\sigma_\kappa(\xi)},
\]
using the substitution $s=-t$.
\end{proof}

We now record three kernels that occur naturally in the argument,
\begin{equation}\label{eq:three-kernels}
  \omega_C = \mathbf 1_{(0,1]}, \qquad
  \omega_\Gamma(u) = \frac{1}{1+u}, \qquad
  \omega_b(u) = \frac{1}{2\pi i(1-u)}\ (u \ne 1),
\end{equation}
with the prescribed point value $\omega_b(1) = 0$. The first kernel
gives exactly the Ces\`aro operator,
\[
  W(\omega_C)_{jk} = \begin{cases} \dfrac{1}{j+1}, & k \le j, \\[4pt] 0, & k > j, \end{cases}
\]
and the second gives a shifted Hilbert matrix,
\begin{equation}\label{eq:hilbert-shift}
  W(\omega_\Gamma)_{jk} = \frac{1}{j+k+2}.
\end{equation}
Thus, if $\Gamma_{jk} = (j+k+1)^{-1}$, then $\Gamma - W(\omega_\Gamma)
\in \mathcal S_2$. For the third kernel, consider the sawtooth
function $b(e^{i\theta}) = 1 - \theta/(2\pi)$, $0 < \theta < 2\pi$,
whose Fourier coefficients are $\widehat b(0) = 1/2$, $\widehat b(n) =
(2\pi i n)^{-1}$ for $n \ne 0$. Hence
\begin{equation}\label{eq:Tb-def}
  T_b = W(\omega_b) + \tfrac12 I.
\end{equation}

The first two kernels belong to $\Wiener$, and their Mellin symbols
are
\begin{equation}\label{eq:symbol-C-Gamma}
\begin{aligned}
  \sigma_{\omega_C}(\xi) &= \int_0^1 u^{-1/2+i\xi}\, du = \frac{1}{1/2+i\xi},\\
  \sigma_{\omega_\Gamma}(\xi) &= \int_0^\infty \frac{u^{-1/2+i\xi}}{1+u}\, du = \pi \operatorname{sech}(\pi\xi).
  \end{aligned}
\end{equation}
The kernel $\omega_b$ is singular at $u=1$ and does not belong to
$\Wiener$; its Mellin symbol is understood in the principal-value
sense,
\begin{equation}\label{eq:symbol-b}
  \PV \int_0^\infty \omega_b(u)\, u^{-1/2+i\xi}\, du = -\tfrac12 \tanh(\pi\xi).
\end{equation}
Indeed, with $s = 1/2+i\xi$,
\[
  \PV \int_0^\infty \frac{u^{s-1}}{1-u}\, du = \int_0^1 \frac{u^{s-1}-u^{-s}}{1-u}\, du
  = \psi(1-s) - \psi(s) = \pi \cot(\pi s),
\]
and~\eqref{eq:symbol-b} follows from the reflection formula for the
digamma function $\psi$. Finally, the Ces\`aro Mellin symbol has the
geometric interpretation
\begin{equation}\label{eq:cesaro-symbol-geom}
  \sigma_{\omega_C} : \mathbb R \longrightarrow \Sigma_C \setminus \{0\},
  \qquad \sigma_{\omega_C}(\xi) = \frac{1}{1/2+i\xi},
\end{equation}
which is a homeomorphism. This last fact will be the geometric anchor
of the Calkin calculus built in Section~\ref{sec:Calkin}.

\section{Boundedness and the discrete Mellin product defect}\label{sec:boundedness}

Having fixed the relevant kernels, we now turn to the two estimates on
which the whole discrete calculus rests: a sufficient condition for
boundedness of a sampled Mellin matrix, and a quantitative comparison
between discrete matrix multiplication and continuous Mellin
convolution. Both estimates will be applied, in
Section~\ref{sec:algebra}, to the specific kernel algebra $\Kernels$
introduced there.

The Wiener condition~\eqref{eq:wiener-class} alone does not imply
boundedness of the sampled matrix: kernels concentrated in a shrinking
interval around $1$ may have uniformly bounded Wiener norm while
$W(\kappa)_{00} = \kappa(1)$ becomes arbitrarily large. We therefore
supplement Wiener integrability with an asymptotic variation
condition.

For a pointwise-defined function $\phi$ on an interval $I$, we denote
by $\Var_I \phi$ its usual total variation
\[
  \Var_I\phi
  =
  \sup_N\left\{
    \sum_{j=1}^N
    |\phi(x_j)-\phi(x_{j-1})|,  \;
    x_0<\cdots<x_N,\;
    x_j\in I
  \right\}.
\]
In particular, isolated
point excursions are counted. For $\kappa \in \Wiener$, set
\[
  g_\kappa(u) = |\kappa(u)|\, u^{-1/2}, \qquad h_\kappa(u) = |\kappa(u)|\, u^{3/2}.
\]
We say that $\kappa$ satisfies condition~\textbf{(R)} if
\begin{equation}\label{eq:condition-R}
  \Var_{[1/M,\infty)} g_\kappa = o(M), \qquad
  \Var_{(0,M]} h_\kappa = o(M), \qquad M \to \infty.
\end{equation}

\begin{lemma}[Calkin--Schur estimate]\label{lem:schur}
If $\kappa \in \Wiener$ satisfies~\textbf{(R)}, then $W(\kappa)$
defines a bounded operator on $H^2$, and
\begin{equation}\label{eq:schur-bound}
  \|\pi(W(\kappa))\| \le \|\kappa\|_{\Wiener}.
\end{equation}
\end{lemma}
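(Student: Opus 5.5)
We will use the Schur test with weights $p_j=(j+1)^{-1/2}$, comparing the row and column sums of $|W(\kappa)|$ with Riemann sums for $\|\kappa\|_{\Wiener}$. The condition \textbf{(R)} then controls the discretization error, so that the error is $o(1)$ as the truncation index grows. The Schur bound is $\le\|\kappa\|_{\Wiener}+o(1)$ on the tail. The finite corner is finite rank, hence compact, and this gives~\eqref{eq:schur-bound}.

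\emph{Row sums.} Fix $j$ and set $x=j+1$. With the weight $(k+1)^{-1/2}$,
\[
 R_j:=\sum_{k\ge0}|W(\kappa)_{jk}|\Big(\frac{k+1}{j+1}\Big)^{-1/2}
 =\frac1x\sum_{n\ge1} g_\kappa\!\Big(\frac nx\Big).
\]
This is a Riemann sum with mesh $1/x$ for $\int_0^\infty g_\kappa=\|\kappa\|_{\Wiener}$. We cut at $1/M$, with $M$ to be chosen depending on $x$. On $[1/M,\infty)$, the standard Euler-type inequality for functions of bounded variation gives
\[
 \Big|\frac1x\sum_{n/x\ge1/M}g_\kappa(n/x)-\int_{1/M}^\infty g_\kappa\Big|\le \frac1x\Var_{[1/M,\infty)}g_\kappa+\frac1x\sup g_\kappa ,
\]
and the supremum is itself bounded by the variation plus a boundary term. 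The range $n/x<1/M$ is only nonempty when $x>M$, so we take $M=x$. Then $n\ge1$ forces $n/x\ge1/x=1/M$, the lower range is empty, and the whole sum is covered. The error is $\le C x^{-1}\Var_{[1/x,\infty)}g_\kappa$, which is $o(1)$ by \textbf{(R)}. The neglected integral $\int_0^{1/x}g_\kappa\to0$ by integrability. Hence
\[
 \sup_{j\ge J}R_j\le\|\kappa\|_{\Wiener}+o(1)\qquad (J\to\infty).
\]
\emph{Column sums.} Fix $k$ and set $y=k+1$. With the weight $(j+1)^{-1/2}$,
\[
 \sum_j |W(\kappa)_{jk}|\Big(\frac{j+1}{k+1}\Big)^{-1/2}
 =\sum_{m\ge1}\frac1m\Big|\kappa\Big(\frac ym\Big)\Big|\Big(\frac ym\Big)^{1/2}.
\]
Substituting $u=y/m$, each term equals $\frac1y\,h_\kappa(u)\,u^{-1}\cdot(\text{spacing factor})$. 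It is more natural to write it as $\frac1y\sum_m h_\kappa(y/m)(m/y)^{2}\cdot$ a mesh adapted to $v=m/y$, that is,
\[
 \frac1y\sum_{m\ge1}|\kappa(1/v_m)|v_m^{-3/2}, \qquad v_m=m/y .
\]
This is a Riemann sum for $\int_0^\infty|\kappa(1/v)|v^{-3/2}\,dv=\|\kappa\|_{\Wiener}$, with integrand $\tilde h(v)=h_\kappa(1/v)$. The variation of $h_\kappa$ on $(0,M]$ equals the variation of $\tilde h$ on $[1/M,\infty)$. With $M=y$, the same argument bounds the error by $C y^{-1}\Var_{(0,y]}h_\kappa=o(1)$.

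\emph{Conclusion.} Write $W(\kappa)=P_NW+ (I-P_N)W$ with $P_N$ the projection onto the first $N$ coordinates. Each row of $W$ is in $\ell^2$, since $R_j<\infty$ for every $j$, so $P_NW$ is finite rank once boundedness is known. Boundedness follows from the Schur test applied globally: $R_j$ and the column sums are finite for each index and $\le\|\kappa\|_{\Wiener}+o(1)$ eventually, hence uniformly bounded. Compactness of $P_NW$ needs slightly more than finite rank. It suffices to also split the columns, i.e.\ to consider $W-(I-P_N)W(I-P_N)$. The column cutoff $WP_N$ is compact because each column is in $\ell^2$. The Schur test for $(I-P_N)W(I-P_N)$ uses only rows $j\ge N$ and columns $k\ge N$, where the bounds are $\|\kappa\|_{\Wiener}+o(1)$. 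Letting $N\to\infty$ yields~\eqref{eq:schur-bound}.

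\textbf{Main obstacle.} The delicate step is the Euler--Stieltjes comparison with a \emph{pointwise} total variation on a half-infinite interval. This requires care with the endpoint terms and with the supremum of $g_\kappa$ on $[1/x,\infty)$, which must be shown to be $o(x)$. That bound follows since $\sup g_\kappa\le \Var_{[1/x,\infty)}g_\kappa+\inf g_\kappa$ and $\inf g_\kappa=0$ by integrability. Matching the correct side of the cutoff to each half of \textbf{(R)} also requires attention.
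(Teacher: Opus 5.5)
Your proposal is correct and follows essentially the paper's proof. Both use the weighted Schur test with $p_j=(j+1)^{-1/2}$, compare the row sums $\frac1x\sum_{n\ge1}g_\kappa(n/x)$ and the column sums (via $v\mapsto 1/v$, as Riemann sums of $h_\kappa(1/v)$) with $\|\kappa\|_{\Wiener}$ up to errors controlled by the two halves of \textbf{(R)}, and then compress to $(I-E_N)W(\kappa)(I-E_N)$ before letting $N\to\infty$. Your extra $\sup g_\kappa$ term, and the remarks about rows lying in $\ell^2$ ``because $R_j<\infty$'' or finite-rank corners needing ``more'' to be compact, are unnecessary but harmless: the paper compares each sample with the integral over the cell to its right, which removes the supremum, and the global Schur bound already makes every finite-rank corner compact.
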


\begin{proof}
Put $m_j = j+1$, $L = \|\kappa\|_{\Wiener}$, $p_j = m_j^{-1/2}$. We use
the elementary estimate
\begin{equation}\label{eq:euler-elem}
  q \sum_{r \ge 1} \phi(rq) \le \int_q^\infty \phi(t)\, dt + q\, \Var_{[q,\infty)} \phi,
\end{equation}
valid for every nonnegative integrable $\phi$ of bounded variation on
$[q,\infty)$. Indeed, on each interval $[rq,(r+1)q]$,
\(
 \phi(rq)
 \le
 \phi(t)+\Var_{[rq,(r+1)q]}\phi
\)
,for every $t\in[rq,(r+1)q]$ and integration followed by summation gives
\eqref{eq:euler-elem}.

For the $j$-th row put $a = m_j$ and $c=m_k$. Then
\[
  R_j = \frac{1}{p_j} \sum_{k\ge0} |W(\kappa)_{jk}|\, p_k =a^{1/2}\sum_{c\geq 1}\frac{1}{a} \left\vert\kappa\left(\frac{c}{a}\right)\right\vert\frac{1}{c^{1/2}}= \frac1a \sum_{c\ge1} g_\kappa\!\left(\frac ca\right),
\]
and~\eqref{eq:euler-elem} with $q=1/a$ gives $R_j \le L + \eps_j$,
$\eps_j = m_j^{-1} \Var_{[1/m_j,\infty)} g_\kappa \to 0$.

For the $k$-th column put $b = m_k$ and $F_b(x) = b^{1/2} x^{-3/2}
|\kappa(b/x)| = b^{-1} h_\kappa(b/x)$. The change of variables $u=b/x$
gives $\int_0^\infty F_b = L$, and since $x \mapsto b/x$ maps
$[1,\infty)$ monotonically onto $(0,b]$, $\Var_{[1,\infty)} F_b = b^{-1}
\Var_{(0,b]} h_\kappa$. Consequently
\[
\begin{aligned}
    C_k &= \frac{1}{p_k} \sum_{j\ge0} |W(\kappa)_{jk}|\, p_j=
 b^{1/2}\sum_{j\ge0}
 \frac{1}{m_j}
 \left|\kappa\!\left(\frac b{m_j}\right)\right|\frac{1}{m_j^{1/2}}\\
 &\le \frac{1}{b}\sum_{j\ge1}
 h_\kappa\left(\frac{b}{j}\right)=\sum_{j\geq 1} F_b(j)\le L + \eps'_k,
  \qquad \eps'_k = m_k^{-1} \Var_{(0,m_k]} h_\kappa \to 0.
  \end{aligned}
\]
The weighted Schur test \cite{Schur04, DK03} therefore gives boundedness of $W(\kappa)$.

To obtain the Calkin estimate, let $E_N$ be the orthogonal projection
onto $\operatorname{span}\{e_0,\dots,e_{N-1}\}$ and set $W_N = (I-E_N)
W(\kappa) (I-E_N)$. Since $W(\kappa)-W_N$ has finite rank,
$\pi(W_N) = \pi(W(\kappa))$. Applying the row and column bounds only
to indices $j,k \ge N$ gives
\[
  \|\pi(W(\kappa))\| \le \|W_N\| \le \Bigl(L+\sup_{j\ge N}\eps_j\Bigr)^{1/2}
  \Bigl(L+\sup_{k\ge N}\eps'_k\Bigr)^{1/2},
\]
and letting $N\to\infty$ proves~\eqref{eq:schur-bound}.
\end{proof}

We next compare the product of two sampled matrices with the matrix
associated with their Mellin convolution. The discrepancy is a
Riemann-sum error, localized at the natural lattice scale. For two
kernels $\kappa,\eta$ and $\beta>0$, define
\begin{equation}\label{eq:Hbeta}
  H_\beta(s) = \frac{\kappa(s)\, \eta(\beta/s)}{s}, \qquad s>0,
\end{equation}
and set $H_\beta(s)=0$ for $s\le0$. For $a\ge1$, put
\begin{equation}\label{eq:VaIa}
  V_a(\beta) = \Var_{[1/a,\infty)} H_\beta, \qquad
  I_a(\beta) = \int_0^{1/a} |H_\beta(s)|\, ds.
\end{equation}

The following elementary estimate isolates the only
summation fact used in the lattice argument.

\begin{lemma}[Euler--Stieltjes estimate]\label{lem:euler-stieltjes}
Let \(F\in L^1(\mathbb R)\cap BV(\mathbb R)\) be identically zero on
\((-\infty,1)\). Then \(F(t)\to0\) as \(t\to\pm\infty\), the series
\(
  \sum_{c\ge1}F(c)
\)
converges absolutely, and
\begin{equation}\label{eq:euler-stieltjes}
  \left|
    \sum_{c\ge1}F(c)
    -
    \int_1^\infty F(t)\,dt
  \right|
  \le
  \Var_{[1,\infty)}F.
\end{equation}
\end{lemma}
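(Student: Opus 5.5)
The plan is to compare each lattice value \(F(c)\) with the average of \(F\) over the unit cell \([c,c+1]\), and to sum the resulting errors over \(c\ge1\).

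First, the decay at infinity. Since \(F\) has bounded variation on \(\mathbb R\), the one-sided limits \(\lim_{t\to\pm\infty}F(t)\) exist. Indeed, for \(s<t\) we have \(|F(t)-F(s)|\le \Var_{[s,\infty)}F\), and this tends to \(0\) as \(s\to\infty\) because the total variation is finite, so the Cauchy criterion applies; the case \(t\to-\infty\) is symmetric. At \(-\infty\) the limit is \(0\) because \(F\equiv0\) on \((-\infty,1)\). At \(+\infty\) a nonzero limit \(\ell\) would give \(|F|\ge|\ell|/2\) on a half-line, which contradicts \(F\in L^1\). Hence \(F(t)\to0\) as \(t\to\pm\infty\).

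Next, the cellwise comparison. For every \(c\ge1\) and every \(t\in[c,c+1]\) we have
\[
  |F(c)-F(t)|\le \Var_{[c,c+1]}F .
\]
Integrating this over \(t\in[c,c+1]\) gives
\[
  \Bigl|F(c)-\int_c^{c+1}F(t)\,dt\Bigr|\le \Var_{[c,c+1]}F .
\]
The pointwise variation is superadditive over adjacent closed intervals that share only endpoints: a partition of \([c,c+1]\) and a partition of \([c+1,c+2]\) concatenate into a partition of \([c,c+2]\). Therefore
\[
  \sum_{c\ge1}\Var_{[c,c+1]}F\le \Var_{[1,\infty)}F<\infty .
\]

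Finally, the summation. Because \(F\in L^1\), the series \(\sum_c\int_c^{c+1}|F|\) converges. Since also \(|F(c)|\le \int_c^{c+1}|F|+\Var_{[c,c+1]}F\), the series \(\sum_c F(c)\) converges absolutely. Summing the cell estimates over \(c\ge1\) and using \(\sum_{c\ge1}\int_c^{c+1}F=\int_1^\infty F\) (dominated convergence) yields \eqref{eq:euler-stieltjes}.

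The only delicate point is that the variation here is the pointwise one, which counts isolated point values. This is exactly what makes the cell bound \(|F(c)-F(t)|\le\Var_{[c,c+1]}F\) valid at every point, with no almost-everywhere caveat, so I expect no real obstacle beyond bookkeeping.
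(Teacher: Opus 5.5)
Your proposal is correct and follows essentially the paper's argument: compare \(F(c)\) with its average over the cell \([c,c+1]\), bound each error by \(\Var_{[c,c+1]}F\), and sum using additivity of the pointwise variation. The only differences are cosmetic. You get absolute convergence from the triangle inequality on each cell, whereas the paper applies the same estimate to \(|F|\) using \(\Var|F|\le\Var F\). You also work with the infinite series directly, whereas the paper works with partial sums up to \(N\) and then lets \(N\to\infty\).
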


\begin{proof}
Since \(F\in BV(\mathbb R)\), the limits of \(F(t)\) as
\(t\to\pm\infty\) exist and are finite; since \(F\in L^1(\mathbb R)\),
both limits must be zero.

For \(N\ge1\),
\[
\begin{aligned}
  \sum_{c=1}^N F(c)-\int_1^{N+1}F(t)\,dt
  &=
  \sum_{c=1}^N
  \int_c^{c+1}\bigl(F(c)-F(t)\bigr)\,dt.
\end{aligned}
\]
Hence
\[
\begin{aligned}
  &\left|
    \sum_{c=1}^N F(c)-\int_1^{N+1}F(t)\,dt
  \right|
  \le
  \sum_{c=1}^N
  \int_c^{c+1}|F(c)-F(t)|\,dt\\
  &\qquad \le
  \sum_{c=1}^N
  \Var_{[c,c+1]}F=
  \Var_{[1,N+1]}F\le
  \Var_{[1,\infty)}F.
\end{aligned}
\]

It remains to justify absolute convergence of the sampled series.
Applying the same estimate to \(|F|\), and using
\(
  \Var_I|F|\le\Var_I F
\),
gives
\[
\begin{aligned}
  \sum_{c=1}^N|F(c)|
  &\le
  \int_1^{N+1}|F(t)|\,dt
  +
  \Var_{[1,N+1]}|F|\le
  \|F\|_{L^1(\mathbb R)}
  +
  \Var_{[1,\infty)}F.
\end{aligned}
\]
Thus \(\sum_{c\ge1}|F(c)|<\infty\). Letting \(N\to\infty\) in the
finite-sum estimate proves \eqref{eq:euler-stieltjes}.

Notice that the argument uses the pointwise total variation, so
isolated excursions at sampled points are automatically included.
\end{proof}

\begin{theorem}[Localized lattice estimate]\label{thm:lattice}
Assume that \(W(\kappa)\) and \(W(\eta)\) are bounded. Fix $j,l\ge0$ and set $a=j+1$, $b=l+1$, $\beta=b/a$. Suppose that $H_\beta \in L^1(0,\infty)$ and $V_a(\beta)<\infty$. Then
\begin{equation}\label{eq:lattice-est}
  \left|
    [W(\kappa)W(\eta)]_{jl}
    -
    \frac1a(\kappa\star\eta)(\beta)
  \right|
  \le
  \frac{V_a(\beta)}{a^2}
  +
  \frac{I_a(\beta)}a.
\end{equation}
If these hypotheses hold for every \(a,b\ge1\) and the square of the
right-hand side is summable over \((a,b)\), then
\[
  W(\kappa)W(\eta)-W(\kappa\star\eta)\in\mathcal S_2.
\]
\end{theorem}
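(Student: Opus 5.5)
Write the $(j,l)$ entry of the product explicitly. With $m_k=k+1=c$,
\[
  [W(\kappa)W(\eta)]_{jl}=\sum_{c\ge1}\frac1a\,\kappa\!\left(\frac ca\right)\frac1c\,\eta\!\left(\frac bc\right).
\]
Put $s=c/a$, so that $\eta(b/c)=\eta(\beta/s)$ and $1/c=1/(as)$. The summand is then $\frac1{a^2}H_\beta(c/a)$, and
\[
  [W(\kappa)W(\eta)]_{jl}=\frac1{a^2}\sum_{c\ge1}H_\beta\!\left(\frac ca\right).
\]
On the other side, $\frac1a(\kappa\star\eta)(\beta)=\frac1a\int_0^\infty H_\beta(s)\,ds$. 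This integral converges absolutely because $H_\beta\in L^1$, and it agrees with the convolution because the integrand is exactly $\kappa(u)\eta(\beta/u)/u$. The whole estimate therefore reduces to comparing a Riemann sum of mesh $1/a$ with the integral of $H_\beta$.

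Apply Lemma~\ref{lem:euler-stieltjes} to $F(t)=H_\beta(t/a)\mathbf 1_{[1,\infty)}(t)$, which vanishes on $(-\infty,1)$. It is integrable, since $\int_1^\infty F=a\int_{1/a}^\infty H_\beta$. It has bounded variation on $\mathbb R$: its variation is at most $\Var_{[1,\infty)}F+|F(1)|$, the dilation does not change variation, so $\Var_{[1,\infty)}F=V_a(\beta)<\infty$, and $|F(1)|$ is finite. The lemma then gives
\[
  \Bigl|\sum_{c\ge1}H_\beta(c/a)-a\int_{1/a}^\infty H_\beta\Bigr|\le V_a(\beta).
\]
Dividing by $a^2$ yields
\[
  \Bigl|[W(\kappa)W(\eta)]_{jl}-\frac1a\int_{1/a}^\infty H_\beta\Bigr|\le \frac{V_a(\beta)}{a^2}.
\]
The missing piece $\frac1a\int_0^{1/a}H_\beta$ is bounded in modulus by $I_a(\beta)/a$, and the triangle inequality gives \eqref{eq:lattice-est}.

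For the second assertion, note that $\frac1a(\kappa\star\eta)(b/a)$ is exactly $W(\kappa\star\eta)_{jl}$ by \eqref{eq:Wkappa-def}. The estimate \eqref{eq:lattice-est} then says the entries of $D=W(\kappa)W(\eta)-W(\kappa\star\eta)$ satisfy $|D_{jl}|\le r(a,b)$, with $r$ the right-hand side. If $\sum_{a,b}r(a,b)^2<\infty$, the matrix $D$ has square-summable entries. Since $W(\kappa)W(\eta)$ is bounded by hypothesis, $W(\kappa\star\eta)$ is then a bounded operator as well: it is the difference of $W(\kappa)W(\eta)$ and a Hilbert--Schmidt matrix, the latter being bounded with $\|D\|\le\|D\|_{\mathcal S_2}$. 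Hence $D\in\mathcal S_2$, and as a bounded operator it coincides with the entrywise difference.

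The steps are mostly bookkeeping. The main obstacle is the application of Lemma~\ref{lem:euler-stieltjes}: we must check that $F\in BV(\mathbb R)\cap L^1$. This is why we truncate at $t=1$ (equivalently $s=1/a$), which both meets the lemma's vanishing hypothesis and keeps the singular behaviour of $H_\beta$ near $s=0$ inside the $I_a$ term. We must also make sure the matrix product $\sum_c$ defining the $(j,l)$ entry agrees with the operator product. It does: for bounded operators, the entry $\langle W(\kappa)W(\eta)e_l,e_j\rangle$ equals the sum $\sum_c W(\kappa)_{jc}W(\eta)_{cl}$, which converges absolutely here by the lemma.
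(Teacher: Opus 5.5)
Your proof is correct and follows essentially the same route as the paper. The paper works with $G(t)=a^{-1}H_\beta(t/a)$, which is your $F$ up to the factor $a$; it splits at $t=1$ so that $\int_0^1$ is absorbed by $I_a(\beta)$, and applies Lemma~\ref{lem:euler-stieltjes} to $G\mathbf 1_{[1,\infty)}$. Two points the paper leaves implicit are handled explicitly in your version: the jump $|F(1)|$ when checking $F\in BV(\mathbb R)$, and the boundedness of $W(\kappa\star\eta)$ in the Hilbert--Schmidt step.
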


\begin{proof}
Define $G(t) = \kappa(t/a)\eta(b/t)/t = a^{-1} H_\beta(t/a)$ for $t>0$,
and $G(t)=0$ for $t\le0$. Since $W(\kappa)$ and $W(\eta)$ are bounded,
the relevant row and column belong to $\ell^2$.
By the definition of the sampled matrices,
\[
\begin{aligned}
 [W(\kappa)W(\eta)]_{jl}
 &=\sum_{k\ge0}W(\kappa)_{jk}W(\eta)_{kl}=\frac1a\sum_{k\ge0}
   \frac1{k+1}
   \kappa\!\left(\frac{k+1}{a}\right)
   \eta\!\left(\frac b{k+1}\right)\\
&=
 \frac1a\sum_{c\ge1}
 \frac1c
 \kappa\!\left(\frac ca\right)
 \eta\!\left(\frac bc\right)= \frac1a \sum_{c\ge1} G(c).
\end{aligned} 
\]
Since
$
\frac1a(\kappa\star\eta)(\beta) =
\frac1a\int_0^\infty G(t)\,dt$, we obtain that
\begin{equation}\label{eq:lattice-diff}
  [W(\kappa)W(\eta)]_{jl} - \frac1a(\kappa\star\eta)(\beta)
  = \frac1a \left( \sum_{c\ge1} G(c) - \int_0^\infty G(t)\,dt \right).
\end{equation}
The interval $(0,1)$ contains no positive lattice point, and $t=as$
gives $\left|\int_0^1 G(t)\,dt\right| \le I_a(\beta)$.
For the lattice error on $[1,\infty)$ set $G_1 = G\,\mathbf 1_{[1,\infty)}$.
The hypotheses give $G_1 \in L^1(\mathbb R)\cap BV(\mathbb R)$, since
$\int_{\mathbb R}|G_1| = \int_{1/a}^\infty |H_\beta(s)|\,ds < \infty$ and
$\Var_{[1,\infty)} G = a^{-1} V_a(\beta)$. 
Applying Lemma~\ref{lem:euler-stieltjes} to \(G_1\) therefore gives
\[
  \left|
    \sum_{c\ge1}G(c)
    -
    \int_1^\infty G(t)\,dt
  \right|
  \le
  \frac1a V_a(\beta).
\]
Together with \eqref{eq:lattice-diff} and
\(
  \left|\int_0^1G(t)\,dt\right|
  \le
  I_a(\beta)
\),
this proves \eqref{eq:lattice-est}.
Square summability of the resulting
matrix entries gives the Hilbert--Schmidt conclusion.
\end{proof}

\begin{remark}\label{rem:variation-pointwise}
The variation term in~\eqref{eq:lattice-est} is genuinely pointwise:
isolated excursions of $H_\beta$ at sampled points contribute to total
variation even though they are invisible to the distributional
derivative. This is why the regular kernel class introduced in the
next section is defined using a variation measure that remembers the
chosen pointwise representative.
\end{remark}

\section{The regular kernel algebra and Hilbert--Schmidt multiplicativity}\label{sec:algebra}

We now use the estimates of Section~\ref{sec:boundedness} to build the
algebra of kernels $\Kernels$ announced in the introduction, and to
prove the Hilbert--Schmidt product formula~\eqref{eq:intro-HS}. For
$m\ge0$ put
\begin{equation}\label{eq:weight}
  \mu_m(t) = (1+|t|)^m e^{-|t|/2}, \qquad t \in \mathbb R.
\end{equation}
We first define a variation measure that remembers the chosen
pointwise representative, in the spirit of Remark~\ref{rem:variation-pointwise}.

\begin{definition}[Pointwise variation]\label{def:pointwise-var}
Let $f$ be locally of bounded variation and continuous outside a
finite set $J$. For $\tau \in J$ define the excess excursion
\[
  E_f(\tau) = |f(\tau)-f(\tau^-)| + |f(\tau^+)-f(\tau)| - |f(\tau^+)-f(\tau^-)| \ge 0,
\]
and set
\begin{equation}\label{eq:Df}
  |Df| = |df| + \sum_{\tau\in J} E_f(\tau)\, \delta_\tau,
\end{equation}
where $df$ is the distributional derivative of $f$, viewed as a Radon
measure, and $|df|$ is its total variation.
\end{definition}

Thus $|Df|$ records both the distributional variation and the extra
variation caused solely by the chosen value at an exceptional point:
if $f(0^-)=f(0^+)=1$ but $f(0)=2$, then $df$ has no atom at $0$,
whereas $|Df|$ has an atom of mass $2$. 

With the convention that endpoint atoms are included in the measure
of a compact interval,
\begin{equation}\label{eq:variation-measure}
  \Var_{[a,b]}f\le |Df|([a,b])
\end{equation}
for every compact interval $[a,b]$.  The same estimate extends to
half-lines.  Indeed,
\[
  \Var_{[a,\infty)}f
  =
  \sup_{b>a}\Var_{[a,b]}f
  \le
  \sup_{b>a}|Df|([a,b])
  =
  |Df|([a,\infty)),
\]
and similarly
\(\Var_{(-\infty,b]}f
  \le
  |Df|((-\infty,b])\).


We record the pointwise product
estimate associated with this variation.

\begin{lemma}[Pointwise product variation]\label{lem:leibniz}
Let $u,v$ be pointwise BV functions, continuous outside finite
exceptional sets, and write $r^\#(t) = \max\{|r(t^-)|,|r(t)|,|r(t^+)|\}$.
Then
\begin{equation}\label{eq:leibniz}
  |D(uv)| \le v^\# |Du| + u^\# |Dv|.
\end{equation}
\end{lemma}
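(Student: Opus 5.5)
Both sides of \eqref{eq:leibniz} are positive Radon measures. The right-hand side is the sum of an absolutely continuous/Cantor/jump part and atoms on the finite set $J=J_u\cup J_v$. I would therefore compare the two measures separately on $\mathbb R\setminus J$ and at each point $\tau\in J$.

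Away from $J$, both $u$ and $v$ are continuous, so $uv$ is continuous there. Then $|D(uv)|=|d(uv)|$ and $|Du|=|du|$, $|Dv|=|dv|$ on $\mathbb R\setminus J$. The classical BV product rule for continuous BV functions gives $d(uv)=v\,du+u\,dv$ as measures; one can prove it via Riemann--Stieltjes sums, or via the Vol'pert chain rule, which for continuous functions has no jump correction. Taking total variations yields $|d(uv)|\le |v|\,|du|+|u|\,|dv|$ on $\mathbb R\setminus J$. Since $|v|\le v^\#$ and $|u|\le u^\#$ pointwise, this settles the diffuse part.

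At a point $\tau\in J$, the measure $|D(uv)|$ has atom mass
\[
|(uv)(\tau)-(uv)(\tau^-)|+|(uv)(\tau^+)-(uv)(\tau)|.
\]
To see this, add the jump atom $|(uv)(\tau^+)-(uv)(\tau^-)|$ of $|d(uv)|$ to the excess $E_{uv}(\tau)$. If $uv$ happens to be continuous at $\tau$, the excess formula still gives this mass, and it is harmless. Likewise the atom of $|Du|$ at $\tau$ is $|u(\tau)-u(\tau^-)|+|u(\tau^+)-u(\tau)|$, and similarly for $v$. This holds even if $\tau\notin J_u$, since then both terms vanish. Now split each one-sided increment with the elementary identity
\[
(uv)(\tau)-(uv)(\tau^-)=v(\tau)\bigl(u(\tau)-u(\tau^-)\bigr)+u(\tau^-)\bigl(v(\tau)-v(\tau^-)\bigr),
\]
and treat the right increment $(uv)(\tau^+)-(uv)(\tau)$ in the same way. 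Bounding $|v(\tau)|$ and $|u(\tau^-)|$ by $v^\#(\tau)$ and $u^\#(\tau)$ gives the atomic inequality
\[
|D(uv)|(\{\tau\})\le v^\#(\tau)\,|Du|(\{\tau\})+u^\#(\tau)\,|Dv|(\{\tau\}).
\]
The maximum over the three values $r(\tau^-),r(\tau),r(\tau^+)$ in the definition of $r^\#$ is exactly what lets a single coefficient serve both one-sided pieces.

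The main obstacle is the bookkeeping on $\mathbb R\setminus J$. There $u,v$ may still have no jumps but $du,dv$ may carry Cantor parts, so the product rule has to be justified for measures rather than derivatives. I would handle this with the Stieltjes partition argument: on a compact interval $I$ avoiding $J$, write
\[
\Delta(uv)=v(x_{i})\,\Delta u+u(x_{i-1})\,\Delta v
\]
for each partition interval, sum over the partition, and use uniform continuity of $u,v$ on $I$ to pass to the limit. This gives $|d(uv)|(I)\le\int_I|v|\,d|du|+\int_I|u|\,d|dv|$. Extending by inner regularity to Borel subsets of $\mathbb R\setminus J$ and adding the atomic contributions proves \eqref{eq:leibniz}.
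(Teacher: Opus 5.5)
Your proposal is correct and follows the same route as the paper. On $\mathbb R\setminus J$ you use the Lebesgue--Stieltjes product rule $d(uv)=v\,du+u\,dv$ for continuous BV functions, and at each $\tau\in J$ you use the same splitting $u(\tau)v(\tau)-u(\tau^-)v(\tau^-)=v(\tau)\bigl(u(\tau)-u(\tau^-)\bigr)+u(\tau^-)\bigl(v(\tau)-v(\tau^-)\bigr)$ and its right-hand analogue, with the coefficients bounded by $v^\#(\tau)$ and $u^\#(\tau)$. The only difference is that you sketch a Riemann--Stieltjes partition proof of the continuous product rule, which the paper simply quotes; note that passing from compact intervals to Borel subsets there goes through open sets and outer (not inner) regularity.
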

\begin{proof}
Let \(J\) be the union of the exceptional sets of \(u\) and \(v\).

On each component of \(\mathbb{R}\setminus J\), both functions are continuous
and of bounded variation. Hence the usual Lebesgue--Stieltjes product
rule gives
\(
  d(uv)=u\,dv+v\,du
\),
and therefore
\[
  |d(uv)|
  \le
  |v|\,|du|+|u|\,|dv|
  \le
  v^\#\,|Du|+u^\#\,|Dv|.
\]

It remains to check the atoms at the points of \(J\). Fix
\(\tau\in J\), and write
\(
  u_-=u(\tau^-),\quad u_0=u(\tau),\quad u_+=u(\tau^+)
\),
and similarly \(v_-,v_0,v_+\). By the definition of the pointwise
variation measure,
\[
  |Du|(\{\tau\})
  =
  |u_0-u_-|+|u_+-u_0|,
\]
and analogously for \(v\). Moreover,
\[
\begin{aligned}
 |D(uv)|(\{\tau\})
 &=
 |u_0v_0-u_-v_-|
 +
 |u_+v_+-u_0v_0|.
\end{aligned}
\]
For the first term,
\[
\begin{aligned}
 |u_0v_0-u_-v_-|
 &\le
 |v_0|\,|u_0-u_-|
 +
 |u_-|\,|v_0-v_-|\\
 &\le
 v^\#(\tau)|u_0-u_-|
 +
 u^\#(\tau)|v_0-v_-|,
\end{aligned}
\]
while
\[
\begin{aligned}
 |u_+v_+-u_0v_0|
 &\le
 v^\#(\tau)|u_+-u_0|
 +
 u^\#(\tau)|v_+-v_0|.
\end{aligned}
\]
Adding these two inequalities gives
\[
 |D(uv)|(\{\tau\})
 \le
 v^\#(\tau)|Du|(\{\tau\})
 +
 u^\#(\tau)|Dv|(\{\tau\}).
\]
Combining the nonatomic part with the finitely many atoms proves
\eqref{eq:leibniz}.
\end{proof}

\begin{definition}[Kernel classes]\label{def:kernel-classes}
A pointwise-defined kernel $\kappa$ belongs to $\FBV_m$ if its
logarithmic profile $f_\kappa$ is locally of bounded variation and
there exist $K>0$ and a finite set $J\subset\mathbb R$ such that
$f_\kappa$ is continuous on $\mathbb R\setminus J$ and
\begin{equation}\label{eq:BV-bound}
  |f_\kappa(t)| \le K\mu_m(t), \qquad
  |Df_\kappa| \le K\mu_m(t)\Bigl(dt + \sum_{\tau\in J}\delta_\tau\Bigr).
\end{equation}
Set $\Kernels = \bigcup_{m\ge0} \FBV_m$.
\end{definition}

\begin{lemma}[One-sided values]\label{lem:onesided}
If $\kappa \in \FBV_m$ satisfies~\eqref{eq:BV-bound} with constant
$K$, then $f_\kappa$ has finite one-sided limits at every $\tau \in
\mathbb R$, and $|f_\kappa(\tau^\pm)| \le K\mu_m(\tau)$.
\end{lemma}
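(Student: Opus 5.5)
Since $f_\kappa$ is locally of bounded variation, its one-sided limits $f_\kappa(\tau^\pm)$ exist and are finite at every $\tau\in\mathbb R$. This is the standard fact that a function of bounded variation on a compact interval is a difference of two monotone functions, and monotone functions have one-sided limits. So the only real content is the quantitative bound $|f_\kappa(\tau^\pm)|\le K\mu_m(\tau)$, and I will obtain it by approximating $\tau$ from the appropriate side through points where the pointwise bound in \eqref{eq:BV-bound} is available.

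Fix $\tau$ and consider $\tau^+$; the case $\tau^-$ is symmetric. Since the exceptional set $J$ is finite, there is $\delta>0$ such that $(\tau,\tau+\delta)\cap J=\emptyset$. Take any sequence $t_n\downarrow\tau$ with $t_n\in(\tau,\tau+\delta)$. The first inequality in \eqref{eq:BV-bound} holds pointwise for every $t$, so
\[
  |f_\kappa(t_n)|\le K\mu_m(t_n).
\]
Because $\mu_m(t)=(1+|t|)^m e^{-|t|/2}$ is continuous, letting $n\to\infty$ gives
\[
  |f_\kappa(\tau^+)|=\lim_n|f_\kappa(t_n)|\le K\lim_n\mu_m(t_n)=K\mu_m(\tau).
\]
The same argument with $t_n\uparrow\tau$ gives the bound for $\tau^-$. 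In fact the restriction to points outside $J$ is not needed, since the pointwise bound holds everywhere; it only serves to make clear that the limit is taken through points of continuity, should one prefer to read the pointwise bound as an almost-everywhere statement.

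If the pointwise bound is interpreted only almost everywhere, I would instead choose the $t_n$ from the full-measure set where it holds. This is still possible arbitrarily close to $\tau$ on each side, and the existence of the one-sided limit guarantees that any such sequence yields it. There is no genuine obstacle here. The only step needing care is justifying existence of the one-sided limits, which rests entirely on local bounded variation; the second inequality in \eqref{eq:BV-bound} is not required.
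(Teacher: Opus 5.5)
Your proof is correct and matches the paper's argument: the one-sided limits exist by local bounded variation, and you pass to the limit in $|f_\kappa(t_n)|\le K\mu_m(t_n)$ along $t_n\to\tau^\pm$ using the continuity of $\mu_m$. Your extra remarks about avoiding $J$ or reading the bound almost everywhere are harmless but unnecessary, since the pointwise bound in \eqref{eq:BV-bound} holds at every point.
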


\begin{proof}
Local bounded variation gives existence of both one-sided limits. If
$t_n>\tau$, $t_n\downarrow\tau$, then $|f_\kappa(t_n)|\le K\mu_m(t_n)$;
passing to the limit and using continuity of $\mu_m$ yields the
right-hand estimate. The left-hand estimate is identical.
\end{proof}

The Ces\`aro and Hilbert kernels belong to this class: 
\[
f_{\omega_C}(t)
= e^{t/2}\mathbf 1_{(-\infty,0]}(t),\, f_{\omega_\Gamma}(t) = \tfrac12
\operatorname{sech}(t/2)\quad \text{ so } \omega_C,\omega_\Gamma \in \FBV_0.
\]
We need three stability facts.

\begin{lemma}[Convolution of the weights]\label{lem:weight-conv}
For $m,n\ge0$ there exists $C_{m,n}>0$ such that
\begin{equation}\label{eq:weight-conv}
  (\mu_m * \mu_n)(t) \le C_{m,n}\, \mu_{m+n+1}(t), \qquad t \in \mathbb R.
\end{equation}
\end{lemma}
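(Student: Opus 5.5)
The plan is a direct computation using the triangle inequality $|t|\le|s|+|t-s|$. Write
\[
(\mu_m*\mu_n)(t)=\int_{\mathbb R}(1+|s|)^m(1+|t-s|)^n e^{-(|s|+|t-s|)/2}\,ds .
\]
Since $|s|+|t-s|\ge|t|$, with equality exactly when $s$ lies between $0$ and $t$, split the line into the segment $I_t$ between $0$ and $t$ and its complement.

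On $I_t$ we have $|s|,|t-s|\le|t|$ and $|s|+|t-s|=|t|$. Hence the integrand is at most
\[
(1+|t|)^{m+n}e^{-|t|/2}.
\]
The segment has length $|t|\le 1+|t|$, so this piece is bounded by $\mu_{m+n+1}(t)$. This is where the extra power comes from.

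Off $I_t$, suppose for definiteness $t\ge0$; the case $t<0$ follows by the symmetry $s\mapsto -s$, $t\mapsto -t$, since $\mu_m$ is even.

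For $s>t$, put $s=t+r$ with $r>0$. Then $|s|+|t-s|=t+2r$, and the integrand is at most
\[
(1+t+r)^m(1+r)^n e^{-t/2}e^{-r}\le (1+t)^m(1+r)^{m+n}e^{-t/2}e^{-r},
\]
using $(1+t+r)\le(1+t)(1+r)$. Integrating in $r$ gives
\[
C\,(1+t)^m e^{-t/2}\le C\,\mu_{m+n+1}(t).
\]
For $s<0$, write $s=-r$. Then $|s|+|t-s|=t+2r$, and the same factorization $(1+t+r)^n\le(1+t)^n(1+r)^n$ gives $C(1+t)^n e^{-t/2}$.

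Summing the three pieces yields the bound with
\[
C_{m,n}=1+2\int_0^\infty(1+r)^{m+n}e^{-r}\,dr .
\]
There is no real obstacle. The only point needing care is the middle segment: the exponential gives no decay there, so it forces the loss of one power, from $m+n$ to $m+n+1$.
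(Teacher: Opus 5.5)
Your proof is correct and follows the paper's argument. Both proofs split $\mathbb R$ into the segment between $0$ and $t$, where $|s|+|t-s|=|t|$ and the segment length $|t|$ costs the extra power of $(1+|t|)$, and the two complementary rays, where the additional decay $e^{-r}$ (the paper's $e^{-\operatorname{dist}(s,J_t)}$) absorbs the polynomial growth; you simply make explicit the factorization $(1+t+r)\le(1+t)(1+r)$ and the resulting constant $C_{m,n}$, which the paper leaves implicit.
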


\begin{proof}
Let $J_t$ be the closed segment joining $0$ and $t$. Then $|s|+|t-s| =
|t| + 2\operatorname{dist}(s,J_t)$, so the exponential part of the
convolution is $e^{-|t|/2} e^{-\operatorname{dist}(s,J_t)}$. On $J_t$
the polynomial factors are $O((1+|t|)^{m+n})$ and the interval has
length $|t|$; on the two complementary rays the exponential factor
absorbs all polynomial growth. This proves~\eqref{eq:weight-conv}.
\end{proof}

\begin{proposition}[Algebra and involution]\label{prop:closure}
If $\kappa \in \FBV_m$ and $\eta \in \FBV_n$, then $\kappa\star\eta
\in \FBV_{m+n+1}$. Moreover $\kappa^\vee \in \FBV_m$. Hence
$\Kernels$ is a commutative complex $\star$-algebra closed under
$\vee$.
\end{proposition}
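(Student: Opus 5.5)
The whole argument will take place on the logarithmic side. By Proposition~\ref{prop:dictionary} we have $f_{\kappa\star\eta}=f_\kappa*f_\eta$ and $f_{\kappa^\vee}(t)=\overline{f_\kappa(-t)}$. Closure under $\vee$ is then immediate. The map $t\mapsto -t$ combined with conjugation preserves local bounded variation and continuity off the reflected finite set $-J$. It also preserves the excess excursions $E_f$, hence $|Df|$ transforms by reflection. Since $\mu_m$ is even, both bounds in~\eqref{eq:BV-bound} hold with the same $K$. Commutativity, bilinearity and associativity of $\star$ follow from the corresponding properties of convolution in $L^1(\mathbb R)$, because $\FBV_m\subset\Wiener$ ($\mu_m\in L^1$). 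Closure under linear combinations is clear: take $\max(m,n)$, the union of the exceptional sets, and the sum of the constants, using $\mu_m\le\mu_{\max}$. So the real content is the convolution statement.

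For $h=f_\kappa*f_\eta$, the pointwise bound $|h|\le K_\kappa K_\eta(\mu_m*\mu_n)\le K_\kappa K_\eta C_{m,n}\mu_{m+n+1}$ is Lemma~\ref{lem:weight-conv}. Note that $h$ is defined everywhere as an absolutely convergent integral, so it has a canonical pointwise representative. Next I will show that $h$ is continuous on all of $\mathbb R$, so that $J_h=\emptyset$ and $|Dh|=|dh|$. Continuity follows by dominated convergence: $f_\eta$ is bounded, and $s\mapsto f_\eta(t-s)$ is continuous at every $s$ with $t-s\notin J_\eta$, which excludes only finitely many $s$. The domination comes from $|f_\kappa(s)||f_\eta(t'-s)|\le K_\kappa K_\eta\mu_m(s)\,\sup_{|t'-t|\le1}\mu_n(t'-s)$, which is integrable.

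It remains to bound $|dh|$. The distributional derivative of a convolution is $dh=f_\eta*df_\kappa$. Here $f_\kappa$ is locally BV, so $df_\kappa$ is a Radon measure, and the convolution of an $L^1_{\rm loc}$ function with a measure is justified by the weighted integrability. Then
\[
|dh|\le |f_\eta|*|df_\kappa|\le K_\kappa K_\eta\Bigl(\mu_n*\mu_m+\sum_{\tau\in J_\kappa}\mu_m(\tau)\mu_n(\cdot-\tau)\Bigr)dt .
\]
The first term is bounded by Lemma~\ref{lem:weight-conv}. For the finitely many atoms, I will use $\mu_m(\tau)\mu_n(t-\tau)\le C\mu_n(t)\le C\mu_{m+n+1}(t)$. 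This holds because $(1+|\tau|)^m$ is a constant for fixed $\tau$, and $e^{-|t-\tau|/2}\le e^{|\tau|/2}e^{-|t|/2}$ while $(1+|t-\tau|)\le(1+|\tau|)(1+|t|)$. Hence $|Dh|\le K\mu_{m+n+1}\,dt$, which is \eqref{eq:BV-bound} with an empty exceptional set. Local BV of $h$ follows from this same bound.

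The main obstacle I expect is the rigorous justification of $dh=f_\eta*df_\kappa$, together with the claim that the pointwise representative of $h$ coincides with its continuous version. For the first, I plan to test against $\varphi\in C_c^\infty$ and use Fubini, which is legitimate thanks to the weighted bounds. For the second, the continuity argument above shows that the canonical integral representative is itself continuous, so no pointwise excursions arise. An alternative would be to differentiate using the product rule of Lemma~\ref{lem:leibniz}, but the convolution route avoids it.
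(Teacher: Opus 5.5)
Your proposal is correct and follows essentially the same route as the paper. Both arguments pass to logarithmic profiles via Proposition~\ref{prop:dictionary}, take the pointwise bound from Lemma~\ref{lem:weight-conv}, and write the derivative of the convolution as a function convolved with a measure, whose absolutely continuous part and finitely many shifted atoms are each bounded by a multiple of $\mu_{m+n+1}$. Both then use continuity of the convolution to exclude point excursions, and handle $\vee$ by reflection and conjugation.

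The differences are only cosmetic. You put the derivative on $f_\kappa$ rather than $f_\eta$, and you prove continuity by dominated convergence rather than by $L^1$-continuity of translations. You are also somewhat more explicit than the paper about the atom estimate and about justifying $d(f_\kappa*f_\eta)=f_\eta*df_\kappa$.
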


\begin{proof}
Write $f=f_\kappa$, $g=f_\eta$. By~\eqref{eq:BV-bound}, $f,g\in
L^1(\mathbb R)$, so Proposition~\ref{prop:dictionary} gives $f_{\kappa\star\eta}
= f*g$, and the pointwise bound follows from~\eqref{eq:BV-bound} and
Lemma~\ref{lem:weight-conv}.

We next control the variation of $f*g$. Distributionally,
$d(f*g)=f*dg$. The convolution is continuous because
$\|(f*g)(\cdot+h)-f*g\|_\infty \le \|f(\cdot+h)-f\|_1 \|g\|_\infty \to
0$. Moreover, the convolution of the absolutely continuous measure
$f(t)\,dt$ with the finite measure $dg$ is absolutely continuous:
$d(f*g)(t) = h(t)\,dt$, $h(t) = \int_{\mathbb R} f(t-s)\,dg(s)$. Using~\eqref{eq:BV-bound},
\[
  |h(t)| \le C(\mu_m*\mu_n)(t) + C\sum_{\tau\in J_g} \mu_n(\tau)\mu_m(t-\tau)
  \le C'\mu_{m+n+1}(t),
\]
by Lemma~\ref{lem:weight-conv}, the finiteness of $J_g$, and the
elementary bound $\mu_m(t-\tau)\le C_{\tau,m}\mu_{m+n+1}(t)$. Hence
$|d(f*g)|\le C'\mu_{m+n+1}(t)\,dt$; since $f*g$ is continuous, it has
no point excursions, and $\kappa\star\eta \in \FBV_{m+n+1}$ follows.

Finally, $f_{\kappa^\vee}(t)=\overline{f_\kappa(-t)}$ by
Proposition~\ref{prop:dictionary}; since $\mu_m$ is even and
$|\overline z| = |z|$, reflection and conjugation preserve both the
pointwise bound and the variation estimate in~\eqref{eq:BV-bound}
(conjugation is an isometry of $\mathbb C$, so it leaves $|Df|$
unchanged). Finite linear combinations are handled using
subadditivity of pointwise variation and the finite union of the
exceptional sets. This proves that $\Kernels$ is a commutative complex
$\star$-algebra.
\end{proof}

\begin{lemma}[Regularity]\label{lem:regularity}
Every $\kappa\in\FBV_m$ belongs to $\Wiener$ and satisfies $(R)$.
More precisely, if \eqref{eq:BV-bound} holds with constant $K$ and
exceptional set $J$, then, for $M\ge1$,
\[
  \max\left\{
    \Var_{[1/M,\infty)}g_\kappa,\,
    \Var_{(0,M]}h_\kappa
  \right\}
  \le
  C_{K,m,J}
  +
  4K\sqrt M(1+\log M)^m,
\]
where one may take
\[
  C_{K,m,J}=
  2K\int_0^\infty
       (1+t)^m e^{-3t/2}\,dt
  +
  K\sum_{\tau\in J}
       (1+|\tau|)^m e^{|\tau|/2}.
\]
\end{lemma}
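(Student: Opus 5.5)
The plan is to pass to logarithmic coordinates, where everything reduces to the weighted bounds~\eqref{eq:BV-bound} on $f=f_\kappa$.

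\textbf{Wiener norm.} Since $\|\kappa\|_{\Wiener}=\|f\|_{L^1}$ by Proposition~\ref{prop:dictionary}, membership in $\Wiener$ follows from $|f|\le K\mu_m$ together with the integrability of $\mu_m$.

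\textbf{Rewriting $g_\kappa$ and $h_\kappa$.} With $u=e^t$ we have $|\kappa(e^t)|=e^{-t/2}|f(t)|$. Hence
\[
  g_\kappa(e^t)=e^{-t}|f(t)|,\qquad h_\kappa(e^t)=e^{t}|f(t)|.
\]
Total variation is invariant under the monotone reparametrization $u=e^t$. Therefore $\Var_{[1/M,\infty)}g_\kappa$ is the variation of $G(t)=e^{-t}|f(t)|$ over $[-\log M,\infty)$, and $\Var_{(0,M]}h_\kappa$ is the variation of $\tilde H(t)=e^{t}|f(t)|$ over $(-\infty,\log M]$. These two cases are mirror images under $t\mapsto -t$, so I only treat $G$.

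\textbf{Bounding the variation measure.} By~\eqref{eq:variation-measure}, $\Var G\le |DG|([-\log M,\infty))$. I apply Lemma~\ref{lem:leibniz} with $u=e^{-t}$, which is smooth, and $v=|f|$. This needs $|D|f||\le|Df|$. On the continuous part, $|d|f||\le|df|$ is standard. At atoms the inequality follows from $\bigl||a|-|b|\bigr|\le|a-b|$ applied to each of the two jumps in $|Dv|(\{\tau\})=|v_0-v_-|+|v_+-v_0|$, using the same atom computation as in the proof of Lemma~\ref{lem:leibniz}. Combined with Lemma~\ref{lem:onesided}, the Leibniz bound gives
\[
  |DG|\le e^{-t}K\mu_m(t)\,dt+e^{-t}K\mu_m(t)\Bigl(dt+\sum_{\tau\in J}\delta_\tau\Bigr).
\]
In the atom term I use $e^{-t}$ itself rather than its $\#$-value, since $e^{-t}$ is continuous.

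\textbf{Integrating.} The continuous part equals $2K\int_{-\log M}^\infty(1+|t|)^m e^{-t-|t|/2}\,dt$.
\begin{itemize}[leftmargin=*]
\item On $t\ge0$ the integrand is $(1+t)^m e^{-3t/2}$, which produces the first term of $C_{K,m,J}$.
\item On $[-\log M,0]$ the integrand is $(1+|t|)^m e^{|t|/2}$. This is at most $(1+\log M)^m e^{|t|/2}$, and $\int_0^{\log M}e^{s/2}\,ds\le 2\sqrt M$. The contribution is therefore at most $4K\sqrt M(1+\log M)^m$.
\end{itemize}
Each atom at $\tau$ contributes $Ke^{-\tau}\mu_m(\tau)=K(1+|\tau|)^m e^{-\tau-|\tau|/2}$. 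In the bad direction this is $K(1+|\tau|)^m e^{|\tau|/2}$, and in the good direction it is smaller, so the atoms are bounded by the sum in $C_{K,m,J}$. The $\tilde H$ case is identical with $t\mapsto -t$.

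\textbf{Condition (R).} Since $\sqrt M(1+\log M)^m=o(M)$, condition~\textbf{(R)} follows.

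\textbf{Main obstacle.} The step that needs the most care is passing from $|Df|$ to $|D|f||$ at the exceptional points, and checking that one-sided limits of the product behave correctly. Both are routine given Lemmas~\ref{lem:leibniz} and~\ref{lem:onesided}.
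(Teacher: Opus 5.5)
Your proposal is correct and follows essentially the same route as the paper: logarithmic coordinates, the bound $|D|f_\kappa||\le|Df_\kappa|$, the pointwise Leibniz rule combined with Lemma~\ref{lem:onesided}, and the same split of the integral over $[0,\infty)$, over $[-\log M,0]$, and over the atoms. The only cosmetic difference is that you handle $h_\kappa$ by reflecting $t\mapsto -t$ directly, whereas the paper carries out the same reflection through the involution, using $g_{\kappa^\vee}(u)=h_\kappa(1/u)$, $f_{\kappa^\vee}(t)=\overline{f_\kappa(-t)}$ and $C_{K,m,-J}=C_{K,m,J}$.
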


\begin{proof}
The Wiener estimate follows from $\|\kappa\|_{\Wiener} = \|f_\kappa\|_1$
and~\eqref{eq:BV-bound}. For the first variation, use $u=e^t$ and set
$G(t)=g_\kappa(e^t)=e^{-t}|f_\kappa(t)|$. Since $u=e^t$ is increasing,
$\Var_{[1/M,\infty)} g_\kappa = \Var_{[-\log M,\infty)} G$. 
Since the map \(z\mapsto |z|\) is \(1\)-Lipschitz, taking absolute
values does not increase the pointwise variation. More precisely,
\(
  |D|f_\kappa||\le |Df_\kappa|
\).
Indeed, this is standard for the nonatomic part of the variation, while
at an exceptional point \(\tau\),
\[
\begin{aligned}
 &\bigl||f_\kappa(\tau)|-|f_\kappa(\tau^-)|\bigr|
 +\bigl||f_\kappa(\tau^+)|-|f_\kappa(\tau)|\bigr|\\
 &\qquad\le
 |f_\kappa(\tau)-f_\kappa(\tau^-)|
 +|f_\kappa(\tau^+)-f_\kappa(\tau)|.
\end{aligned}
\]


Hence, by the pointwise Leibniz estimate,
\[
  |DG|
  \le
  e^{-t}|D|f_\kappa||
  +
  e^{-t}|f_\kappa|^\#(t)\,dt.
\]
By \eqref{eq:BV-bound} and Lemma~\ref{lem:onesided},
\[
  |f_\kappa|^\#(t)
  =
  \max\bigl\{
    |f_\kappa(t^-)|,\,
    |f_\kappa(t)|,\,
    |f_\kappa(t^+)|
  \bigr\}
  \le K\mu_m(t).
\]
Together with
\(
  |D|f_\kappa||\le |Df_\kappa|
\),
this gives
\[
  |DG|
  \le
  K e^{-t}\mu_m(t)
  \left(
    2\,dt+\sum_{\tau\in J}\delta_\tau
  \right).
\]

Therefore, using the half-line version of
\eqref{eq:variation-measure},
\[
\begin{aligned}
  \Var_{[1/M,\infty)}g_\kappa
  &=
  \Var_{[-\log M,\infty)}G\\
  &\le
  2K\int_{-\log M}^{\infty}
       e^{-t}\mu_m(t)\,dt
  +
  K\sum_{\substack{\tau\in J\\
                    \tau\ge-\log M}}
       e^{-\tau}\mu_m(\tau).
\end{aligned}
\]


For the contribution of $[0,\infty)$,
\[
  2K\int_0^\infty e^{-t}\mu_m(t)\,dt
  =
  2K\int_0^\infty
       (1+t)^m e^{-3t/2}\,dt.
\]
Moreover,
\[
  e^{-\tau}\mu_m(\tau)
  \le
  (1+|\tau|)^m e^{|\tau|/2},
  \qquad \tau\in\mathbb R,
\]
so the atomic contribution is bounded by
\(
  K\sum_{\tau\in J}
  (1+|\tau|)^m e^{|\tau|/2}
\).
Thus both contributions are bounded by $C_{K,m,J}$.

On the negative interval,
\[
\begin{aligned}
  2K\int_{-\log M}^{0}e^{-t}\mu_m(t)\,dt
  &=
  2K\int_0^{\log M}(1+s)^m e^{s/2}\,ds\\
  &\le
  4K\sqrt M(1+\log M)^m.
\end{aligned}
\]
Hence
\[
  \Var_{[1/M,\infty)}g_\kappa
  \le
  C_{K,m,J}
  +
  4K\sqrt M(1+\log M)^m.
\]

For the second variation we use the involution.  By
Proposition~\ref{prop:closure}, $\kappa^\vee\in\FBV_m$, the defining
estimate is preserved with the same constant $K$, and the exceptional
set $J$ is replaced by $-J$.  Moreover,
\[
  g_{\kappa^\vee}(u)
  =
  |\kappa^\vee(u)|u^{-1/2}
  =
  |\kappa(1/u)|u^{-3/2}
  =
  h_\kappa(1/u).
\]
Since $u\mapsto1/u$ maps $[1/M,\infty)$ monotonically onto $(0,M]$,
\(
  \Var_{(0,M]}h_\kappa
  =
  \Var_{[1/M,\infty)}g_{\kappa^\vee}
\).
The constant introduced above is invariant under reflection of the
exceptional set,
\(
  C_{K,m,-J}=C_{K,m,J}
\),
and therefore the first estimate applied to $\kappa^\vee$ gives
\[
  \Var_{(0,M]}h_\kappa
  \le
  C_{K,m,J}
  +
  4K\sqrt M(1+\log M)^m.
\]
\end{proof}

By Lemmas~\ref{lem:schur} and~\ref{lem:regularity}, every $W(\kappa)$,
$\kappa \in \Kernels$, defines a bounded operator; we can therefore
apply the localized lattice estimate of Theorem~\ref{thm:lattice} to
kernels in $\Kernels$.

\begin{theorem}[Hilbert--Schmidt product formula]\label{thm:HSproduct}
Let $\kappa \in \FBV_m$ and $\eta \in \FBV_n$. Then
\begin{equation}\label{eq:HSproduct}
  W(\kappa)W(\eta) - W(\kappa\star\eta) \in \mathcal S_2.
\end{equation}
More precisely, there are $C>0$ and $q=q(m,n)>0$ --- for instance
$q=2(m+n+1)$ --- such that, with $a=j+1$, $b=l+1$,
\begin{equation}\label{eq:HSproduct-quant}
  |[W(\kappa)W(\eta) - W(\kappa\star\eta)]_{jl}|
  \le C\, \frac{(1+\log a)^q (1+\log b)^q}{ab}.
\end{equation}
\end{theorem}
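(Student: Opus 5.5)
The plan is to apply the localized lattice estimate of Theorem~\ref{thm:lattice}. First I check its hypotheses: $W(\kappa)$ and $W(\eta)$ are bounded by Lemmas~\ref{lem:schur} and~\ref{lem:regularity}. Then I bound $V_a(\beta)/a^2$ and $I_a(\beta)/a$ by the right-hand side of~\eqref{eq:HSproduct-quant}. Since $\sum_a (1+\log a)^{2q}a^{-2}<\infty$, that bound is square-summable over $(a,b)$, and~\eqref{eq:HSproduct} follows. Note that $[W(\kappa\star\eta)]_{jl}=a^{-1}(\kappa\star\eta)(\beta)$ by definition, so the left side of~\eqref{eq:lattice-est} is exactly the entry we must estimate.

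All computations are done in logarithmic coordinates. Put $s=e^{\tau}$ and $\beta=e^{\lambda}$, with $\lambda=\log b-\log a$. Then
\[
  H_\beta(e^\tau)=e^{-\tau}\kappa(e^\tau)\eta(e^{\lambda-\tau})
  = e^{-\lambda/2}\, e^{-\tau}\, f_\kappa(\tau)\, f_\eta(\lambda-\tau).
\]
Here the factors $e^{\tau/2}$ and $e^{(\lambda-\tau)/2}$ combine to $e^{\lambda/2}$, which gives the $e^{-\lambda/2}$ prefactor.

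\emph{The integral term.} Using $ds=e^\tau d\tau$,
\[
  I_a(\beta)=e^{-\lambda/2}\int_{-\infty}^{-\log a}|f_\kappa(\tau)f_\eta(\lambda-\tau)|\,d\tau
  \le K^2e^{-\lambda/2}\int_{-\infty}^{-\log a}\mu_m(\tau)\mu_n(\lambda-\tau)\,d\tau .
\]
For $\tau\le-\log a\le 0$ we have $e^{-|\tau|/2}=e^{\tau/2}$. We also have $|\lambda-\tau|\ge \lambda-\tau$ and $|\lambda-\tau|\ge\tau-\lambda$. Consider the two cases. If $\lambda\ge0$, the exponential part is at most $e^{-\lambda/2}e^{\tau/2}e^{-(\lambda-\tau)/2}=e^{-\lambda}e^{\tau}$. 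Integrating $e^{\tau}$ up to $-\log a$ gives a factor $1/a$, so the bound is $\lesssim e^{-\lambda}/a=1/b$, up to logarithmic factors. If $\lambda<0$, then $\tau$ may lie on either side of $\lambda$. Splitting at $\tau=\lambda$, the exponential part integrates to $\lesssim e^{-\lambda/2}\cdot e^{\lambda/2}\cdot\min(1,a^{-1}e^{-\lambda/2}\ldots)$. Carrying the cases through, one gets $\lesssim (ab)^{-1/2}$ times polylogarithmic factors. This is weaker than needed.

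I expect this to be the main obstacle, and the point where the naive weight bookkeeping must be refined. The gap appears precisely because $I_a/a$ should be $\lesssim 1/(ab)$. I would therefore recheck the case $\lambda<0$ carefully. There the $\tau$-range of integration is $\tau\le -\log a$. The weight $\mu_n(\lambda-\tau)$ is largest near $\tau=\lambda=\log b-\log a\ge-\log a$, so this peak lies outside the range, at or beyond the endpoint since $b\ge1$. So on the range, $e^{-|\lambda-\tau|/2}=e^{(\tau-\lambda)/2}$. The integrand's exponential part is then $e^{-\lambda/2}e^{\tau/2}e^{(\tau-\lambda)/2}=e^{-\lambda}e^{\tau}$, which is the same as in the case $\lambda\ge0$. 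Integrating gives $e^{-\lambda}/a=1/b$, times the polynomial factors $(1+\log a)^m(1+\log b)^n$. Hence $I_a/a\lesssim (1+\log a)^{q}(1+\log b)^{q}/(ab)$ in all cases.

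\emph{The variation term.} Since $\tau\mapsto e^\tau$ is monotone, $V_a(\beta)=\Var_{[-\log a,\infty)}\Phi$ with
\[
  \Phi(\tau)=e^{-\lambda/2}e^{-\tau}f_\kappa(\tau)f_\eta(\lambda-\tau).
\]
Lemma~\ref{lem:leibniz} applies twice, once to the product $f_\kappa\cdot f_\eta(\lambda-\cdot)$ and once to the smooth factor $e^{-\tau}$, using Lemma~\ref{lem:onesided} for the $\#$-bounds. This gives
\[
  |D\Phi|\le 3K^2e^{-\lambda/2}e^{-\tau}\mu_m(\tau)\mu_n(\lambda-\tau)\Bigl(d\tau+\sum_{J_\kappa\cup(\lambda-J_\eta)}\delta\Bigr).
\]
Integrating this over $[-\log a,\infty)$ and using~\eqref{eq:variation-measure} bounds $V_a(\beta)$. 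The absolutely continuous part is handled exactly as $I_a$ with the direction of integration reversed. For $\tau\ge-\log a$, the factor $e^{-\tau}e^{-|\tau|/2}$ integrates to at most $\lesssim a^{1/2}$ times the companion exponential. Running the same case analysis in $\lambda$ gives a contribution $\lesssim a\,(\text{polylog})/b$. The atoms are finitely many points $\tau\in J_\kappa$ or $\tau=\lambda-\tau'$ with $\tau'\in J_\eta$. Each atom is evaluated at a point where one of the two weights is $O(1)$, and it yields the same order.

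Dividing by $a^2$ gives $V_a/a^2\lesssim(\text{polylog})/(ab)$. Adding the two terms proves~\eqref{eq:HSproduct-quant}, and square-summability then gives~\eqref{eq:HSproduct} through Theorem~\ref{thm:lattice}. The exponent $q=2(m+n+1)$ is a generous choice that absorbs all polynomial factors, including those from Lemma~\ref{lem:weight-conv}.
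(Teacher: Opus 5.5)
Your proposal is correct and follows the paper's route: Theorem~\ref{thm:lattice} in logarithmic coordinates, with $I_a(\beta)\lesssim(1+\log a)^m(1+\log b)^n/b$ because $\lambda-\tau\ge\log b\ge0$ on the whole tail range (so the retracted $(ab)^{-1/2}$ attempt should be deleted), and $V_a(\beta)$ controlled by Lemmas~\ref{lem:leibniz} and~\ref{lem:onesided} together with the weight bookkeeping. The one difference is that you treat $b>a$ directly, whereas the paper works only for $b\le a$ and reduces $b>a$ to that case via $\Delta(\kappa,\eta)^*=\Delta(\eta^\vee,\kappa^\vee)$; your route works because $V_a(\beta)\lesssim(a/b)\cdot\mathrm{polylog}$ holds in both regimes (for $b\le a$ the polylogarithmic contributions from $\tau\ge0$ and from the atoms in $J_\kappa$ are absorbed since $a/b\ge1$, while for $b>a$ they decay like $a/b$), but establishing this requires the explicit split of the integral at $\tau=0$ and $\tau=\lambda$, since your heuristic of $a^{1/2}$ times the companion exponential by itself gives only $a/\sqrt b$.
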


\begin{proof}

Let \(f=f_\kappa\) and \(g=f_\eta\).  Choose constants \(K,K'>0\)
and finite exceptional sets \(J_f,J_g\) for which the defining
estimates \eqref{eq:BV-bound} hold for \(f\) and \(g\), respectively.
Throughout the proof, the constant \(C\) may depend on
\(
  m,\ n,\ K,\ K',\ J_f,\ J_g
\)
but never on the lattice parameters \(a\) and \(b\).
Set
\(
  \beta= b/a\), \(y=\log\beta
\).

By~\eqref{eq:Hbeta} and
the definition of the logarithmic profiles,
\begin{equation}\label{eq:Hbeta-log}
  H_\beta(e^x) = e^{-y/2-x} f(x)\, g(y-x).
\end{equation}
In particular, $\int_0^\infty |H_\beta(s)|\,ds = e^{-y/2}\int_{\mathbb R}
|f(x)g(y-x)|\,dx \le e^{-y/2}\|f\|_\infty\|g\|_1 < \infty$. 
{By Lemmas~\ref{lem:regularity} and~\ref{lem:schur},
\(W(\kappa)\) and \(W(\eta)\) are bounded.  We will estimate the two
quantities in \eqref{eq:lattice-est}; the variation estimate will
in particular show that \(V_a(\beta)<\infty\), so that Theorem~\ref{thm:lattice} applies.}

Assume first that $b\le a$, so $y\le0$. Set $L=1+\log a$, $B=1+\log
b$; then $B\le L$ and $1+|y|\le L$.

\emph{Tail term.} With $s=e^x$ and $x=y-w$,
\[
  I_a(\beta) = e^{-y/2} \int_{\log b}^\infty |f(y-w)g(w)|\,dw.
\]
Using~\eqref{eq:BV-bound} and $1+w+|y| \le L(1+w)$,
\[
  I_a(\beta) \le CL^m \int_{\log b}^\infty (1+w)^{m+n} e^{-w}\,dw
  \le C\, \frac{L^m B^{m+n}}{b},
\]
so that
\begin{equation}\label{eq:Ia-bound}
  \frac{I_a(\beta)}{a} \le C\, \frac{L^m B^{m+n}}{ab}.
\end{equation}

\emph{Variation term.} Write $H_\beta(e^x) = e^{-y/2} U(x)V(x)$,
$U(x)=e^{-x}f(x)$, $V(x)=g(y-x)$. By Lemmas~\ref{lem:onesided}
and~\ref{lem:leibniz}, the nonatomic contribution is bounded by
\[
  Ce^{-y/2} \int_{-\log a}^\infty e^{-x}\mu_m(x)\mu_n(y-x)\,dx.
\]
Using $|x|+|y-x| = |y|+2\operatorname{dist}(x,[y,0])$, the factor
$e^{-y/2}=e^{|y|/2}$ cancels the common factor $e^{-|y|/2}$ from the
two weights. Thus it remains to estimate
\[
  Q=
  \int_{-\log a}^{\infty}
  e^{-x}(1+|x|)^m(1+|y-x|)^n
  e^{-\text{dist}(x,[y,0])}\,dx.
\]
Since \(b\le a\), we have
\(
  -\log a\le y\le0
\).
We split \(Q=Q_1+Q_2+Q_3\) over
\(
  [-\log a,y], [y,0], [0,\infty)
\).

On \([-\log a,y]\), put \(x=y-w\). Then
\(0\le w\le\log b\),
\(
  |x|=|y|+w\le\log a
\),
and
\(
  e^{-x}e^{-\text{dist}(x,[y,0])}
  =
  e^{-y+w}e^{-w}
  =
  e^{-y}
  =
  \frac ab
\).
Therefore
\[
\begin{aligned}
 Q_1
 &\le
 C\frac ab L^m
 \int_0^{\log b}(1+w)^n\,dw\le
 C\frac ab L^mB^{n+1}.
\end{aligned}
\]

On \([y,0]\), write \(x=y+u\), \(0\le u\le |y|\). Since
\(\text{dist}(x,[y,0])=0\),
\(
  e^{-x}=e^{-y}e^{-u}=\frac ab e^{-u}
\),
while both polynomial factors are bounded by \(CL^m(1+u)^n\).
Hence
\[
  Q_2
  \le
  C\frac ab L^m
  \int_0^\infty(1+u)^n e^{-u}\,du
  \le
  C\frac ab L^m.
\]

Finally, for \(x\ge0\),
\(
  \text{dist}(x,[y,0])=x
\),
so the exponential factor is \(e^{-2x}\). Since
\(
  1+|y-x|
  =
  1+x+|y|
  \le
  L(1+x)
\), we have that
\[
  Q_3
  \le
  CL^n\int_0^\infty
  (1+x)^{m+n}e^{-2x}\,dx
  \le
  CL^n.
\]
It remains to estimate the finitely many atoms arising from the
exceptional sets of \(f\) and \(g\). 
{For simplicity we sum over the whole exceptional sets \(J_f\) and \(J_g\), rather than only over the points lying in the relevant half-line; this only enlarges the resulting upper bound.}
By
\eqref{eq:BV-bound} and Lemma~\ref{lem:onesided}, their total
contribution is bounded by
\[
\begin{aligned}
 &Ce^{-y/2}
 \sum_{\tau\in J_f}
 e^{-\tau}\mu_m(\tau)\mu_n(y-\tau)+
 Ce^{-y/2}
 \sum_{\sigma\in J_g}
 e^{-(y-\sigma)}\mu_m(y-\sigma)\mu_n(\sigma).
\end{aligned}
\]
Since \(J_f\) is finite, for each fixed \(\tau\in J_f\),
\[
 e^{-y/2}e^{-\tau}\mu_m(\tau)\mu_n(y-\tau)
 \le
 C_\tau(1+|y|)^n
 \le
 C_\tau L^n.
\]
Similarly, for each fixed \(\sigma\in J_g\),
\[
 e^{-y/2}e^{-(y-\sigma)}
 \mu_m(y-\sigma)\mu_n(\sigma)
 \le
 C_\sigma e^{-y}(1+|y|)^m
 \le
 C_\sigma\frac ab L^m.
\]
Therefore, after summing over the finite exceptional sets,
\[
  \text{atomic contribution}
  \le
  C\left(
    L^n+\frac abL^m
  \right).
\]

Consequently,
\begin{equation}\label{eq:Va-bound}
  \frac{V_a(\beta)}{a^2} \le C \left( \frac{L^m B^{n+1}}{ab} + \frac{L^m}{ab} + \frac{L^n}{a^2} \right).
\end{equation}

{Combining \eqref{eq:Ia-bound}, and \eqref{eq:Va-bound} with \eqref{eq:lattice-est}, all terms
are already of the required form except for
\(
  C\dfrac{L^n}{a^2}
\).
Here the assumption \(b\le a\) is used: since
\(
  \dfrac1{a^2}\le \dfrac1{ab},
\)
and \(L,B\ge1\), this term is also bounded by
\(
  C\dfrac{L^qB^q}{ab}
\).
Taking, for instance,
\(
  q=2(m+n+1)
\), gives \eqref{eq:HSproduct-quant}.}

Suppose now that \(b>a\), and set
\[
  \Delta(\kappa,\eta)
  =
  W(\kappa)W(\eta)-W(\kappa\star\eta).
\]
Using
\(
  W(\kappa)^*=W(\kappa^\vee)\), and \(
  (\kappa\star\eta)^\vee
  =
  \eta^\vee\star\kappa^\vee
\),
we obtain
\(
  \Delta(\kappa,\eta)^*
  =
  \Delta(\eta^\vee,\kappa^\vee)
\).
By Proposition~\ref{prop:closure},
\(
  \eta^\vee\in\FBV_n\), \(\kappa^\vee\in\FBV_m\),
and
\(
  |\Delta(\kappa,\eta)_{jl}|
  =
  |\Delta(\eta^\vee,\kappa^\vee)_{lj}|.
\)
After exchanging \(j\) and \(l\), the second lattice parameter does
not exceed the first, so the estimate proved above applies to the
ordered pair \((\eta^\vee,\kappa^\vee)\), with \(m\) and \(n\)
interchanged.

The resulting constant may differ from the one obtained in the first
case, but it has the same admissible dependence on the defining data.
Since
\[
  q(m,n)=2(m+n+1)=q(n,m),
\]
the exponent is unchanged.  Replacing \(C\) by the larger of the two
constants therefore yields \eqref{eq:HSproduct-quant} for all \(a,b\ge1\).

Suppose now that \(b>a\). Set
\[
  \Delta(\kappa,\eta)
  =
  W(\kappa)W(\eta)-W(\kappa\star\eta).
\]
Using
\(
  W(\kappa)^*=W(\kappa^\vee)
\)
and
\(
  (\kappa\star\eta)^\vee
  =
  \eta^\vee\star\kappa^\vee
\),
we obtain the exact identity
\(
  \Delta(\kappa,\eta)^*
  =
  W(\eta^\vee)W(\kappa^\vee)
  -
  W(\eta^\vee\star\kappa^\vee)
\).
By Proposition~\ref{prop:closure},
\(
  \eta^\vee\in \FBV_n,  \kappa^\vee\in \FBV_m.
\)
Moreover,
\(
  |\Delta(\kappa,\eta)_{jl}|
  =
  |\Delta(\eta^\vee,\kappa^\vee)_{lj}|
\).
Thus exchanging \(j\) and \(l\) reduces the estimate to the case in
which the second lattice parameter does not exceed the first. Hence the preceding estimate applies and gives
\eqref{eq:HSproduct-quant}.

Finally, if $\Delta = W(\kappa)W(\eta)-W(\kappa\star\eta)$,
then~\eqref{eq:HSproduct-quant} gives
\[
  \sum_{j,l\ge0} |\Delta_{jl}|^2 \le C^2 \left( \sum_{r\ge1} \frac{(1+\log r)^{2q}}{r^2} \right)^2 < \infty,
\]
so $\Delta \in \mathcal S_2$.
\end{proof}

\begin{corollary}\label{cor:commutators}
For $\kappa,\eta \in \Kernels$,
\[
  [W(\kappa),W(\eta)] \in \mathcal S_2, \qquad
  [W(\kappa)^*,W(\kappa)] \in \mathcal S_2.
\]
\end{corollary}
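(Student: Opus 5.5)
The plan is to derive both statements from Theorem~\ref{thm:HSproduct}, using commutativity of $\star$ (Proposition~\ref{prop:closure}) and the adjoint formula~\eqref{eq:adjoint}.

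For the first commutator, let $\kappa\in\FBV_m$ and $\eta\in\FBV_n$. Theorem~\ref{thm:HSproduct} applied to the ordered pairs $(\kappa,\eta)$ and $(\eta,\kappa)$ gives
\[
  W(\kappa)W(\eta)-W(\kappa\star\eta)\in\HS,
  \qquad
  W(\eta)W(\kappa)-W(\eta\star\kappa)\in\HS .
\]
By Proposition~\ref{prop:dictionary}, $f_{\kappa\star\eta}=f_\kappa*f_\eta=f_\eta*f_\kappa=f_{\eta\star\kappa}$. The multiplicative convolution~\eqref{eq:mult-conv} is defined pointwise by an absolutely convergent integral, and the substitution $u\mapsto w/u$ shows $\kappa\star\eta=\eta\star\kappa$ pointwise, not merely almost everywhere. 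This matters because $W$ samples point values. Hence $W(\kappa\star\eta)=W(\eta\star\kappa)$ as matrices. Subtracting the two displayed memberships, and using that $\HS$ is a linear space, gives $[W(\kappa),W(\eta)]\in\HS$.

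For the self-commutator, Proposition~\ref{prop:closure} gives $\kappa^\vee\in\FBV_m\subset\Kernels$. By Lemmas~\ref{lem:regularity} and~\ref{lem:schur}, $W(\kappa)$ is bounded, so~\eqref{eq:adjoint} yields $W(\kappa)^*=W(\kappa^\vee)$. Therefore
\[
  [W(\kappa)^*,W(\kappa)]=[W(\kappa^\vee),W(\kappa)],
\]
and the first part, applied to the pair $(\kappa^\vee,\kappa)$, shows this lies in $\HS$.

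There is no real obstacle. The only point needing care is that the commutativity of $\star$ must hold pointwise, since $W$ depends on point values. This is guaranteed because the integral~\eqref{eq:mult-conv} converges absolutely for every $w$ when $\kappa,\eta\in\Kernels$, by the pointwise bounds~\eqref{eq:BV-bound} together with Lemma~\ref{lem:weight-conv}.
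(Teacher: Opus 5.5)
Your proposal is correct and follows the paper's argument: subtract the product formula for the ordered pairs $(\kappa,\eta)$ and $(\eta,\kappa)$ using commutativity of $\star$, then take $\eta=\kappa^\vee$ and use $W(\kappa)^*=W(\kappa^\vee)$. Your additional check that $\kappa\star\eta=\eta\star\kappa$ holds pointwise, by absolute convergence from the weighted bounds, is a worthwhile detail that the paper leaves implicit.
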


\begin{proof}
Subtract~\eqref{eq:HSproduct} for the ordered pairs $(\kappa,\eta)$ and
$(\eta,\kappa)$, using commutativity of $\star$. For the second
statement take $\eta=\kappa^\vee$ and use~\eqref{eq:adjoint}.
\end{proof}

\section{The Calkin Mellin calculus}\label{sec:Calkin}
We begin by deriving the Calkin-algebra consequences of the boundedness and Hilbert--Schmidt estimates established above.

\begin{proposition}[Contractive symbol calculus]\label{prop:contractive}
For $\kappa\in\A$, the Calkin class $\pi(W(\kappa))$ is normal and
\begin{equation}\label{eq:contractive}
  \norm{\pi(W(\kappa))}\le\norm{\sigma_\kappa}_\infty.
\end{equation}
If $\sigma_\kappa=\sigma_\eta$, then $W(\kappa)-W(\eta)$ is compact.  Hence
\begin{equation}\label{eq:Lambda0}
  \Lambda_0(\sigma_\kappa)=\pi(W(\kappa)),\qquad \kappa\in\A,
\end{equation}
induces a contractive $*$-homomorphism on $\sigma(\A)$ and extends continuously to its uniform closure.
\end{proposition}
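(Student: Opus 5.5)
The argument will rest on the $C^*$-identity in the Calkin algebra, combined with the Hilbert--Schmidt product formula (Theorem~\ref{thm:HSproduct}) and the Calkin--Schur estimate (Lemma~\ref{lem:schur}).

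\emph{Normality.} By Corollary~\ref{cor:commutators}, $[W(\kappa)^*,W(\kappa)]\in\HS\subset\Comp$, so $\pi(W(\kappa))$ is normal.

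\emph{Norm bound.} Put $x=\pi(W(\kappa))$. By \eqref{eq:adjoint} and Theorem~\ref{thm:HSproduct}, $x^*x=\pi(W(\kappa^\vee\star\kappa))$. Iterating, $(x^*x)^{N}=\pi(W(\rho_N))$, where $\rho_N=(\kappa^\vee\star\kappa)^{\star N}$. Proposition~\ref{prop:closure} guarantees $\rho_N\in\Kernels$, so the lemma applies at every step. Lemma~\ref{lem:schur} then gives
\[
\norm{x}^{2N}=\norm{(x^*x)^N}\le\norm{\rho_N}_{\Wiener}=\norm{f_{\rho_N}}_{L^1}.
\]
By Proposition~\ref{prop:dictionary}, $f_{\rho_N}=g^{*N}$ with $g=f_{\kappa^\vee}*f_\kappa\in L^1(\R)$, and $\widehat g=|\sigma_\kappa|^2$. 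Taking $2N$-th roots gives
\[
\norm{x}\le\lim_N\norm{g^{*N}}_{L^1}^{1/(2N)}=r_{L^1(\R)}(g)^{1/2}.
\]
Here $r_{L^1(\R)}(g)$ is the spectral radius of $g$ in the convolution algebra $L^1(\R)$, and the limit exists by Gelfand's formula. Because $L^1(\R)$ is a commutative Banach algebra whose Gelfand transform is the Fourier transform (with values on $\R$, plus $0$ from the non-unital point), we get $r(g)=\norm{\widehat g}_\infty=\norm{\sigma_\kappa}_\infty^2$. This yields \eqref{eq:contractive}. Normality is not even needed for this step, since the $C^*$-identity already holds for $x^*x$.

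This spectral-radius step is the only nontrivial point, and it is the main obstacle. The reason is that Lemma~\ref{lem:schur} alone only bounds $\norm{\pi(W(\kappa))}$ by the Wiener norm $\norm{\kappa}_{\Wiener}$, which can be much larger than $\norm{\sigma_\kappa}_\infty$. The power trick upgrades the Wiener norm to the spectral radius, and this works precisely because products in the Calkin algebra are exactly modelled by Mellin convolution, which is the content of the Hilbert--Schmidt product formula.

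\emph{Well-definedness.} If $\sigma_\kappa=\sigma_\eta$, then $\kappa-\eta\in\Kernels$ by linearity, and $\sigma_{\kappa-\eta}=0$. The bound just proved gives $\pi(W(\kappa)-W(\eta))=0$, so $W(\kappa)-W(\eta)$ is compact.

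\emph{Homomorphism and extension.} Linearity of $\kappa\mapsto W(\kappa)$ and $\kappa\mapsto\sigma_\kappa$ makes $\Lambda_0$ linear on $\sigma(\Kernels)$. Multiplicativity follows from $\sigma_{\kappa\star\eta}=\sigma_\kappa\sigma_\eta$ (Proposition~\ref{prop:dictionary}) together with \eqref{eq:HSproduct}. The $*$-property follows from $\sigma_{\kappa^\vee}=\overline{\sigma_\kappa}$ and \eqref{eq:adjoint}. Since $\Lambda_0$ is contractive for the sup norm, it is uniformly continuous and extends uniquely to the uniform closure. The extension is again a contractive $*$-homomorphism, because the algebraic identities pass to limits.
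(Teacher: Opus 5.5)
Your proposal is correct and follows essentially the same route as the paper. Both arguments use the power trick: express powers in the Calkin algebra as $\pi(W)$ of iterated Mellin convolutions via the Hilbert--Schmidt product formula, bound each power by its Wiener norm using the Calkin--Schur estimate, and identify the resulting limit with the spectral radius in $L^1(\mathbb R)$, which equals the sup norm of the Fourier transform.

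The only difference is minor. The paper uses normality of $x$ to get $\|x\|=\rho(x)$ and takes powers $\kappa^{\star n}$. You instead take powers of the self-adjoint element $x^*x$, with kernels $(\kappa^\vee\star\kappa)^{\star N}$, which makes the norm bound independent of the normality step.
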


\begin{proof}
By Proposition \ref{prop:closure}, $\kappa^\vee\in\A$. The relation \eqref{eq:adjoint} and the product formula \eqref{eq:HSproduct}  imply that 
\[
 \pi(W(\kappa))^*\pi(W(\kappa))
 =\pi(W(\kappa^\vee\star\kappa))
 =\pi(W(\kappa\star\kappa^\vee))
 =\pi(W(\kappa))\pi(W(\kappa))^*.
\]
Consequently, the class is normal.

Let $x=\pi(W(\kappa))$.  For every $n \in \N_0 $, multiplicativity modulo compacts gives
$x^n=\pi(W(\kappa^{\star n}))$.  
Since \(x\) is normal in the Calkin \(C^*\)-algebra,
\(
  \|x\|=\rho(x),
\)
and the spectral radius formula yields
\[
\begin{aligned}
  \|x\|
  =\rho(x)
  &=
  \lim_{n\to\infty}\|x^n\|^{1/n}\le
  \lim_{n\to\infty}
  \|\kappa^{\star n}\|_{\Wiener}^{1/n}.
\end{aligned}
\]
By Proposition~\ref{prop:dictionary},
\[
  f_{\kappa^{\star n}}
  =
  f_\kappa^{*n},
  \qquad
  \|\kappa^{\star n}\|_{\Wiener}
  =
  \|f_\kappa^{*n}\|_{L^1(\R)}.
\]
Therefore
\[
  \|x\|
  \le
  \rho_{L^1(\R)}(f_\kappa),
\]
where
\[
  \rho_{L^1(\R)}(f_\kappa)
  =
  \lim_{n\to\infty}
  \|f_\kappa^{*n}\|_{L^1(\R)}^{1/n}
\]
is the spectral radius of \(f_\kappa\) in the convolution Banach
algebra \(L^1(\R)\).
The maximal ideal space of the group algebra $L^1(\R)$ is the dual group $\R$, and its Gelfand transform is the Fourier transform \cite{Rudin}.  Therefore, according to Proposition \ref{prop:dictionary},
\[
 r_{L^1(\R)}(f_\kappa)=\sup_{\xi\in\R}|\widehat f_\kappa(\xi)|
 =\norm{\sigma_\kappa}_\infty,
\]
which proves \eqref{eq:contractive}.  If $\sigma_\kappa=\sigma_\eta$, apply the estimate to $\kappa-\eta$.  Multiplicativity and preservation of the involution follow from Theorem \ref{thm:HSproduct} and Proposition \ref{prop:closure}.
\end{proof}

We next show directly that the Mellin symbols arising from $\A$ are
uniformly dense in $C_0(\R)$.

{\begin{lemma}[Density]\label{lem:density}
We have
\[
  \overline{\sigma(\mathcal A)}^{\|\cdot\|_\infty}
  =C_0(\mathbb R).
\]
More precisely, if
\(
  z(\xi)=\frac{1}{1/2+i\xi}
\),
then the non-unital \(C^*\)-algebra generated by \(z\) is
\(C_0(\mathbb R)\).
\end{lemma}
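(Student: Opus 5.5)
The plan is to prove the "more precisely" statement first and then deduce the density claim from it. Let $\mathcal B$ be the non-unital $C^*$-algebra generated by $z$, that is, the uniform closure of the polynomials in $z,\ol z$ without constant term.

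\emph{Step 1: $\mathcal B=C_0(\R)$.} We have $z\in C_0(\R)$, and by \eqref{eq:cesaro-symbol-geom} $z$ is a homeomorphism of $\R$ onto $\Sigma_C\setminus\{0\}$. So $z$ separates the points of $\R$, and it vanishes nowhere on $\R$. The Stone--Weierstrass theorem for locally compact spaces applies to the self-adjoint subalgebra $\mathcal B\subset C_0(\R)$: a closed self-adjoint subalgebra that separates points and has no common zero is all of $C_0(\R)$. Hence $\mathcal B=C_0(\R)$.

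\emph{Step 2: $\mathcal B\subset\overline{\sigma(\A)}$.} By \eqref{eq:symbol-C-Gamma}, $z=\sigma_{\omega_C}$, with $\omega_C\in\FBV_0\subset\A$. By Proposition~\ref{prop:closure}, $\A$ is a complex algebra under $\star$ and is closed under $\vee$. By Proposition~\ref{prop:dictionary}, $\sigma_{\kappa\star\eta}=\sigma_\kappa\sigma_\eta$ (the Fourier transform turns convolution into products) and $\sigma_{\kappa^\vee}=\ol{\sigma_\kappa}$; linearity of $\kappa\mapsto\sigma_\kappa$ is clear. Thus $\sigma(\A)$ is a self-adjoint subalgebra of functions containing $z$. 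Every polynomial in $z,\ol z$ without constant term therefore lies in $\sigma(\A)$, and taking uniform closures gives $\mathcal B\subset\overline{\sigma(\A)}$.

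\emph{Step 3: $\overline{\sigma(\A)}\subset C_0(\R)$.} For $\kappa\in\A$ we have $f_\kappa\in L^1(\R)$, since $|f_\kappa|\le K\mu_m$ and $\mu_m$ is integrable. By Riemann--Lebesgue, $\sigma_\kappa=\widehat{f_\kappa}\in C_0(\R)$. Since $C_0(\R)$ is uniformly closed, the closure of $\sigma(\A)$ stays inside it.

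Combining the three steps gives equality. I expect no real obstacle. The only points needing care are that Stone--Weierstrass is applied in its non-unital $C_0$ form, where the non-vanishing of $z$ is essential, and that the constant function $1\notin C_0$ is never needed.
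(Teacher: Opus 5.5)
Your proposal is correct and follows essentially the same route as the paper. It uses Riemann--Lebesgue to get $\sigma(\A)\subset C_0(\R)$, closure of $\A$ under $\star$ and $\vee$ to place every constant-free polynomial in $z=\sigma_{\omega_C}$ and $\overline{z}$ inside $\sigma(\A)$, and the locally compact Stone--Weierstrass theorem (via injectivity and non-vanishing of $z$) to conclude; the only cosmetic difference is that you define $\mathcal B$ as already closed, whereas the paper takes the uniform closure of the polynomial algebra at the end.
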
}
\begin{proof}
Since $\A\subset\Wiener$, the Riemann--Lebesgue lemma gives $\sigma(\A)\subset C_0(\R)$.  On the other hand $\omega_C\in\A$, $\omega_C^\vee\in\A$, and $\A$ is a complex $\star$-algebra.  
According to \eqref{eq:symbol-C-Gamma},
\[
  z(\xi)=\sigma_{\omega_C}(\xi)
  =\frac{1}{1/2+i\xi}
\]
belongs to $\sigma(\A)$. Since $\A$ is closed under the involution
$\vee$ and
\(
  \sigma_{\kappa^\vee}=\overline{\sigma_\kappa}
\),
we also have $\overline z\in\sigma(\A)$. Moreover, multiplicative
convolution is transformed by the Mellin symbol into pointwise
multiplication. Hence $\sigma(\A)$ contains the nonunital
$*$-algebra $\mathcal B$ generated by $z$, that is, the finite
linear combinations of monomials
\(
  z^p\overline z^{\,q}\) for
  \(\qquad p+q\ge1
\).
Now $z\in C_0(\R)$, since $z(\xi)\to0$ as $|\xi|\to\infty$.
The algebra $\mathcal B$ is self-adjoint. Furthermore,
\eqref{eq:cesaro-symbol-geom} shows that $z$ is injective, so $\mathcal B$
separates points of $\R$. Finally, $z(\xi)\neq0$ for every
$\xi\in\R$, so $\mathcal B$ vanishes nowhere. The locally compact
Stone--Weierstrass theorem therefore gives
\[
  \overline{\mathcal B}^{\|\cdot\|_\infty}=C_0(\R).
\]
Since
\[
  \mathcal B\subset\sigma(\A)\subset C_0(\R),
\]
it follows that
\(
  \overline{\sigma(\A)}^{\|\cdot\|_\infty}=C_0(\R)
\).
\end{proof}

Combining Proposition \ref{prop:contractive} and Lemma \ref{lem:density}, we obtain a contractive $*$-homomorphism
\begin{equation}\label{eq:Lambda}
  \Lambda:C_0(\R)\longrightarrow  B(H^2)/\K,
  \qquad \Lambda(\sigma_\kappa)=\pi(W(\kappa)).
\end{equation}
The next theorem identifies it exactly.

\begin{theorem}[Calkin Mellin calculus]\label{thm:calkin-main}
The map \eqref{eq:Lambda} is an isometric $*$-isomorphism from $C_0(\R)$ onto the closed non-unital $C^*$-algebra
\begin{equation}\label{eq:range}
 \mathcal I_C
 =\{F(\pi(C)):F\in {C}(\SigmaC),\ F(0)=0\},
 \qquad \SigmaC=\{w:|w-1|=1\}.
\end{equation}
For every $\kappa\in\A$,
\begin{equation}\label{eq:functionalcalc}
  \pi(W(\kappa))=F_\kappa(\pi(C)),
\end{equation}
where $F_\kappa\in C(\SigmaC)$ is characterized by
\begin{equation}\label{eq:Fkappa}
 F_\kappa(0)=0,
 \qquad
 F_\kappa\!\left(\frac1{1/2+i\xi}\right)=\sigma_\kappa(\xi).
\end{equation}
Consequently,
\begin{equation}\label{eq:essresults}
  \norm{W(\kappa)}_{\mathrm{ess}}=\norm{\sigma_\kappa}_\infty,
  \qquad
  \spec_{\mathrm{ess}}(W(\kappa))=\ol{\sigma_\kappa(\R)}.
\end{equation}
\end{theorem}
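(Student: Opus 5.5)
The plan is to upgrade the contractive $*$-homomorphism $\Lambda$ of \eqref{eq:Lambda} to an isometry by proving the reverse inequality $\norm{\pi(W(\kappa))}\ge\norm{\sigma_\kappa}_\infty$ for $\kappa\in\A$. Everything else in the theorem then follows from Gelfand theory for the single normal generator $\pi(C)=\Lambda(z)$.

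\emph{Step 1: lower bound via Weyl sequences.} Fix $\xi\in\R$. For $N\to\infty$ I would take the logarithmic wave packets
\[
  x^{(N)}_k=c_N\,\chi\!\left(\tfrac{\log(k+1)}{\log N}\right)(k+1)^{-1/2-i\xi},
\]
where $\chi$ is a smooth bump supported in $[1,2]$ and $c_N\asymp(\log N)^{-1/2}$ normalizes $\|x^{(N)}\|=1$. These vectors are supported in $[N,N^2]$, hence tend to $0$ weakly.

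Heuristically, with $a=j+1$ and $c=k+1$,
\[
  (W(\kappa)x^{(N)})_j=\frac1a\sum_{c\ge1}\kappa(c/a)\,c_N\chi(\cdot)\,c^{-1/2-i\xi}
  \approx a^{-1/2-i\xi}\,c_N\chi\!\left(\tfrac{\log a}{\log N}\right)\int_0^\infty\kappa(u)\,u^{-1/2-i\xi}\,du .
\]
The right-hand side is $\sigma_\kappa(-\xi)\,x^{(N)}_j$, and since $\xi$ is arbitrary the sign is harmless.

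The errors come from two sources. The first is the Riemann-sum error, which I would control exactly as in Theorem~\ref{thm:lattice} via Lemma~\ref{lem:euler-stieltjes} and the weighted variation bounds of Lemma~\ref{lem:regularity}; it is $O(1/a)$ up to logarithms. The second is the slow variation of $\chi(\log c/\log N)$ over the effective range $c/a\in[e^{-R},e^{R}]$; this is $O(R/\log N)$, and the tail beyond $R$ is controlled by the decay $\mu_m$ of $f_\kappa$. Summing in $\ell^2$ should give
\[
  \norm{W(\kappa)x^{(N)}-\sigma_\kappa(-\xi)\,x^{(N)}}\to0 .
\]
Because the sequence is weakly null, $\sigma_\kappa(-\xi)\in\spec_{\mathrm{ess}}(W(\kappa))$ and $\norm{\pi(W(\kappa))}\ge|\sigma_\kappa(-\xi)|$.

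I expect this $\ell^2$ bookkeeping to be the main obstacle. The point of choosing a window in $\log k$ of width $\log N$ is that its relative error tends to zero, while the window is still wide enough to make all the local Riemann-sum errors square-summable.

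\emph{Step 2: isometry and range.} Together with \eqref{eq:contractive}, Step 1 gives $\norm{\Lambda(\sigma)}=\norm\sigma_\infty$ on the dense set $\sigma(\A)$ (Lemma~\ref{lem:density}), hence on all of $C_0(\R)$. So $\Lambda$ is an isometric $*$-isomorphism onto a closed $C^*$-subalgebra.

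By Lemma~\ref{lem:density}, $C_0(\R)$ is generated as a non-unital $C^*$-algebra by $z=\sigma_{\omega_C}$. Therefore the range is the non-unital $C^*$-algebra generated by $\Lambda(z)=\pi(C)$. Extending $\Lambda$ unitally to $C_0(\R)^\sim\cong C(\R\cup\{\infty\})$, the spectrum of $\pi(C)$ equals the spectrum of $z$ there. By \eqref{eq:cesaro-symbol-geom} this is $z(\R)\cup\{0\}=\SigmaC$.

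The continuous functional calculus for the normal element $\pi(C)$ then identifies the range with \eqref{eq:range}. For each $\sigma\in C_0(\R)$ the function $F=\sigma\circ z^{-1}$, with $F(0)=0$, is continuous on $\SigmaC$, since $z$ is a homeomorphism onto $\SigmaC\setminus\{0\}$ and $\sigma$ vanishes at infinity. Hence $\Lambda(\sigma)=F(\pi(C))$, which gives \eqref{eq:functionalcalc} and \eqref{eq:Fkappa}.

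\emph{Step 3: essential norm and spectrum.} The identity $\norm{W(\kappa)}_{\mathrm{ess}}=\norm{\sigma_\kappa}_\infty$ is the isometry from Step 2. For the spectrum, $\pi(W(\kappa))$ is normal and isometrically realized as $\sigma_\kappa$ inside $C_0(\R)^\sim$, and spectra are preserved under injective unital $*$-homomorphisms of $C^*$-algebras. So
\[
  \spec_{\mathrm{ess}}(W(\kappa))=\sigma_\kappa(\R)\cup\{0\}=\ol{\sigma_\kappa(\R)},
\]
where the last equality holds because $\sigma_\kappa\in C_0(\R)$, so $0$ is a limit point of $\sigma_\kappa(\R)$.
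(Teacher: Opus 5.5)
Your argument is correct, but you reach the lower bound by a different route from the paper. The paper never bounds $\norm{\pi(W(\kappa))}$ from below by hand. Instead, it imports $\spec_{\mathrm{ess}}(C)=\SigmaC$ from \cite[Theorem 3.3]{Young2004}. This makes the functional calculus $\Psi:F\mapsto F(\pi(C))$ on $\{F\in C(\SigmaC):F(0)=0\}$ an isometric $*$-isomorphism onto $\mathcal I_C$. The paper then shows $\Lambda=\Psi\circ U$, where $Ug=g\circ z^{-1}$ on $\SigmaC\setminus\{0\}$ and $Ug(0)=0$, because both maps are $*$-homomorphisms agreeing on the generator $z=\sigma_{\omega_C}$. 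Isometry, the range, \eqref{eq:functionalcalc} and \eqref{eq:essresults} all follow from this factorization and the spectral mapping theorem.

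Your logarithmic wave packets replace that external input with an internal Weyl-sequence argument. It works for every $\kappa\in\A$ at once, and in particular it reproves $\SigmaC\subset\spec_{\mathrm{ess}}(C)$; the reverse inclusion is automatic, since unital homomorphisms shrink spectra. The bookkeeping you flag does close. Take $F(t)=\kappa(t/a)\chi(\log t/\log N)t^{-1/2-i\xi}$, which is supported in $[N,N^2]$. Lemma~\ref{lem:euler-stieltjes} and \eqref{eq:BV-bound} then bound the entry errors by $c_N a^{-1}N^{-1/2}$ up to logarithmic factors. The slow variation of $\chi$ costs $O(1/\log N)$ in norm: near the window this uses $\int|f_\kappa(s)|\,|s|\,ds<\infty$, and away from it the exponential decay of $f_\kappa$.

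The trade-off is this. The paper's route is shorter but rests on a nontrivial spectral theorem for $C$. Yours is self-contained and gives explicit approximate eigenvectors for every point of $\sigma_\kappa(\R)$.

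You also prove more than you need. An injective $*$-homomorphism of $C^*$-algebras is isometric, and $\SigmaC\subset\spec(\pi(C))$ already forces $\ker\Lambda=0$, so Weyl sequences for $C$ alone would suffice.

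One small point in Step 2: deducing $\spec(\pi(C))=\SigmaC$ through the unitization requires the unitized map to be injective, i.e.\ $\pi(I)\notin\Lambda(C_0(\R))$. It is simpler to read this equality off from the inclusion you already obtain in Step 1.
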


\begin{proof}
By \cite[Theorem 3.3]{Young2004}  the essential spectrum of the Ces\`aro operator is
\begin{equation}\label{eq:essC}
  \spec_{\mathrm{ess}}(C)=\SigmaC.
\end{equation}
  By Proposition~\ref{prop:contractive}, $\pi(C)$ is normal.
Hence the continuous functional calculus
\cite[Chapter~4]{Douglas}
gives an isometric $*$-isomorphism
\[
  \Psi:I_{\Sigma_C}\longrightarrow\mathcal I_C,
  \qquad
  \Psi(F)=F(\pi(C))
\]  
where
\[
 I_{\SigmaC}=\{F\in C(\SigmaC):F(0)=0\},
 \, \Psi(F)=F(\pi(C)).
\]
By \eqref{eq:cesaro-symbol-geom},
\(
  z(\xi)=\frac{1}{1/2+i\xi}
\)
is a homeomorphism from $\R$ onto $\SigmaC\setminus\{0\}$, and
$z(\xi)\to0$ as $|\xi|\to\infty$. Hence every $g\in C_0(\R)$
determines a unique function $Ug\in C(\SigmaC)$ by
\[
  (Ug)(0)=0,
  \qquad
  (Ug)(z(\xi))=g(\xi).
\]
The condition $g(\xi)\to0$ at infinity guarantees continuity at the
point $0\in\SigmaC$. Moreover,
\(
  \|Ug\|_{C(\SigmaC)}=\|g\|_\infty
\)
and $U$ preserves products and complex conjugation. 
{Conversely, if \(F\in I_{\Sigma_C}\), define
\[
  g(\xi)=F(z(\xi)),\qquad \xi\in\mathbb R.
\]
Then \(g\) is continuous and, since \(z(\xi)\to0\) as
\(|\xi|\to\infty\),
\(
  g(\xi)\longrightarrow F(0)=0
\).
Thus \(g\in C_0(\mathbb R)\), and by construction \(Ug=F\).
}
Therefore
\[
  U:C_0(\mathbb R)\longrightarrow I_{\Sigma_C}
\]
is an isometric \( * \)-isomorphism.
We claim that
\[
  \Lambda=\Psi\circ U.
\]
Indeed, for the Ces\`aro symbol
\(
  z=\sigma_{\omega_C}
\),
we have
\(
  \Lambda(z)=\pi(W(\omega_C))=\pi(C)
\).
On the other hand, $Uz$ is the coordinate function on $\SigmaC$,
since
\[
  (Uz)(z(\xi))=z(\xi),
  \qquad
  (Uz)(0)=0.
\]
Therefore
\[
  (\Psi\circ U)(z)=\pi(C).
\]
Both $\Lambda$ and $\Psi\circ U$ are $*$-homomorphisms. By
Lemma~\ref{lem:density}, the nonunital $C^*$-algebra generated by
$z$ is all of $C_0(\R)$. Hence the two homomorphisms agree on
$C_0(\R)$.

Consequently, if
\(
  F_\kappa=U(\sigma_\kappa)
\),
so that
\[
  F_\kappa(0)=0,
  \qquad
  F_\kappa(z(\xi))=\sigma_\kappa(\xi),
\]
then
\[
  \pi(W(\kappa))
  =
  F_\kappa(\pi(C)).
\]
Since both $U$ and the continuous functional calculus are isometric,
\[
  \|W(\kappa)\|_{\rm ess}
  =
  \|\pi(W(\kappa))\|
  =
  \|\sigma_\kappa\|_\infty.
\]

{Finally, by the continuous spectral mapping theorem, \cite[Chapter 4]{Douglas}
\[
  \spec_{\mathrm{ess}}(W(\kappa))
  =
  F_\kappa(\Sigma_C).
\]
Now
\[
  F_\kappa(\Sigma_C\setminus\{0\})
  =
  \sigma_\kappa(\mathbb R),
  \qquad
  F_\kappa(0)=0.
\]
Since \(\sigma_\kappa\in C_0(\mathbb R)\),
\(
  0\in\overline{\sigma_\kappa(\mathbb R)}
\).
Therefore
\[
  F_\kappa(\Sigma_C)
  =
  \sigma_\kappa(\mathbb R)\cup\{0\}
  =
  \overline{\sigma_\kappa(\mathbb R)}
\]
and
\[
  \spec_{\mathrm{ess}}(W(\kappa))
  =
  \overline{\sigma_\kappa(\mathbb R)}.
\]
}

\end{proof}
\begin{remark}
We highlight that the identity
\[
  \Lambda(g)=(Ug)(\pi(C))
\]
holds for every \(g\in C_0(\R)\).  Notice, however, that for a general
\(g\in C_0(\R)\) the Calkin class \(\Lambda(g)\) is obtained by
continuity and need not be represented by a single sampled operator
\(W(\kappa)\) with \(\kappa\in\A\).
For \(g=\sigma_\kappa\), \(\kappa\in\A\), we have the concrete
identification
\[
  \pi(W(\kappa))
  =
  F_\kappa(\pi(C)),
  \qquad
  F_\kappa=U(\sigma_\kappa).
\]
\end{remark}

\begin{remark}[The closure in the essential spectrum]
The closure in \eqref{eq:essresults} is essential.  For example,
$\sigma_{\omega_C}(\R)=\SigmaC\setminus\{0\}$, whereas
$\spec_{\mathrm{ess}}(C)=\SigmaC$.
\end{remark}

\part{Toeplitz Algebra}
\section{The Toeplitz mechanism: sawtooth and Hilbert matrix}\label{sec:L3}
We now connect the discrete Mellin calculus with the Toeplitz algebra without using any external solution of the Ces\`aro problem.  The first ingredient is a reduction of polynomial Toeplitz defects to the Hilbert matrix, starting from the classical Toeplitz--Hankel semi-commutator identity \cite{BrownHalmos}.  We write
\[
  \Gamma_{jk}=\frac1{j+k+1},\qquad j,k\ge0.
\]
For $c\in L^\infty(\mathbb T)$ let $H_c=(I-P)M_c|_{H^2}$ denote the Hankel operator.

\begin{lemma}[Toeplitz semi-commutator]\label{lem:semicomm}
For $b,c\in L^\infty(\mathbb T)$,
\begin{equation}\label{eq:semicomm}
  T_bT_c-T_{bc}=-H_{\bar b}^*H_c.
\end{equation}
\end{lemma}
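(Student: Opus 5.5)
The identity follows from decomposing $L^2(\mathbb T)=H^2\oplus (H^2)^\perp$ with $I=P+(I-P)$, and then identifying the adjoint of a Hankel operator. For $f\in H^2$ we have $M_{bc}f=M_bM_cf$. Inserting $I=P+(I-P)$ between $M_b$ and $M_c$ and then applying $P$ gives
\[
  T_{bc}f=PM_bPM_cf+PM_b(I-P)M_cf=T_bT_cf+PM_b(I-P)M_cf .
\]
Hence $T_bT_c-T_{bc}=-PM_b(I-P)M_c|_{H^2}$. It remains to show that $PM_b(I-P)M_c|_{H^2}=H_{\bar b}^*H_c$.

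By definition $H_c=(I-P)M_c|_{H^2}$, which maps $H^2$ into $(H^2)^\perp$. Regard $H_{\bar b}$ as an operator from $H^2$ to $(H^2)^\perp$. Its adjoint $H_{\bar b}^*:(H^2)^\perp\to H^2$ is computed from the pairing. For $g\in(H^2)^\perp$ and $h\in H^2$,
\[
  \langle H_{\bar b}h,g\rangle=\langle (I-P)M_{\bar b}h,g\rangle=\langle M_{\bar b}h,g\rangle=\langle h,M_bg\rangle=\langle h,PM_bg\rangle .
\]
Here we used $(I-P)g=g$, the relation $M_{\bar b}^*=M_b$, and $h\in H^2$. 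Therefore $H_{\bar b}^*=PM_b|_{(H^2)^\perp}$.

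Composing, $H_{\bar b}^*H_c=PM_b(I-P)M_c|_{H^2}$, which is exactly the term found above, and \eqref{eq:semicomm} follows.

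The only point needing care is the convention for $H_{\bar b}^*$: one must read $H_{\bar b}$ as a map into $(H^2)^\perp$, or equivalently extend it by zero on $H^2$ inside $L^2$. In either reading the adjoint restricted to $(H^2)^\perp$ is $PM_b$. The rest is routine bookkeeping with orthogonal projections, and all operators are bounded with norms at most $\|b\|_\infty$ and $\|c\|_\infty$.
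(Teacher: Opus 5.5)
Your proposal is correct and follows essentially the same argument as the paper: you insert $I=P+(I-P)$ to obtain $T_{bc}=T_bT_c+PM_b(I-P)M_c|_{H^2}$, and then identify $PM_b|_{(H^2)^\perp}$ with $H_{\bar b}^*$ by the same inner-product computation. Your remark that $H_{\bar b}$ must be read as a map into $(H^2)^\perp$ makes explicit a convention the paper leaves implicit.
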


\begin{proof}
For $f\in H^2$, write $cf=P(cf)+(I-P)(cf)$.  Applying $PM_b$ gives
\[
 T_{bc}f=T_bT_cf+PM_b(I-P)M_cf.
\]
The last operator is $H_{\bar b}^*H_c$, since for $g\in(H^2)^\perp$ and $h\in H^2$,
\[
 \langle PM_bg,h\rangle=\langle g,\bar b h\rangle
 =\langle g,(I-P)(\bar b h)\rangle.
\]
Rearranging proves \eqref{eq:semicomm}.
\end{proof}
Let $b(e^{2\pi ix})=1-x$ for $0<x<1$, and define
\[
  A_m(n)=\widehat{b^m}(n)=\int_0^1(1-x)^m e^{-2\pi i n x}\,dx.
\]

\begin{lemma}[Common Hankel leading term]\label{lem:hankellead}
For every $m\ge1$ and $n\ne0$,
\begin{equation}\label{eq:Amrec}
 A_m(n)=\frac1{2\pi i n}-\frac{m}{2\pi i n}A_{m-1}(n)
       =\frac1{2\pi i n}+r_m(n),
 \, |r_m(n)|\le \frac{C_m}{n^2}.
\end{equation}
Consequently
\begin{equation}\label{eq:Hbm}
  H_{b^m}=-\frac1{2\pi i}\Gamma+R_m,
  \qquad R_m\in\HS,
\end{equation}
and $R_1=0$.
\end{lemma}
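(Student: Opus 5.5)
The plan is to prove the recursion \eqref{eq:Amrec} by a single integration by parts, get the $O(n^{-2})$ remainder by induction on $m$, and then read off the Hankel matrix of $b^m$ in the standard bases.

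\emph{The recursion.} For $m\ge1$ and $n\ne0$ I integrate by parts in $A_m(n)=\int_0^1(1-x)^m e^{-2\pi i n x}\,dx$. I differentiate $(1-x)^m$ and integrate $e^{-2\pi i n x}$, with antiderivative $e^{-2\pi i n x}/(-2\pi i n)$. The boundary term at $x=1$ vanishes because $m\ge1$. The boundary term at $x=0$ equals $1/(2\pi i n)$. The remaining integral is $m\int_0^1(1-x)^{m-1}e^{-2\pi i n x}\,dx/(-2\pi i n)\cdot(-1)\cdot(-1)$, which after tracking signs is $-\frac{m}{2\pi i n}A_{m-1}(n)$. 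This gives the first identity in \eqref{eq:Amrec}, with $r_m(n)=-\frac{m}{2\pi i n}A_{m-1}(n)$.

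\emph{Size of the remainder.} Since $A_0(n)=\int_0^1 e^{-2\pi i n x}\,dx=0$ for $n\ne0$, we get $r_1\equiv0$. The recursion then shows by induction that $|A_{m-1}(n)|\le C'_{m-1}/|n|$ for all $m\ge1$. Hence $|r_m(n)|\le m\,C'_{m-1}/(2\pi n^2)$.

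\emph{The Hankel matrix.} I use the orthonormal basis $\{e_k\}_{k\ge0}$ of $H^2$ and $\{\bar z^{\,j+1}\}_{j\ge0}$ of $(H^2)^\perp$. For $c\in L^\infty(\mathbb T)$, $\langle H_c e_k,\bar z^{\,j+1}\rangle=\langle c z^k,\bar z^{\,j+1}\rangle=\widehat c(-(j+k+1))$. With $c=b^m$ and $n=-(j+k+1)\ne0$, the leading term $1/(2\pi i n)$ becomes $-\frac{1}{2\pi i}\cdot\frac{1}{j+k+1}$. So
\[
  H_{b^m}=-\tfrac1{2\pi i}\Gamma+R_m,\qquad (R_m)_{jk}=r_m(-(j+k+1)),
\]
where $\Gamma$ is viewed as an operator from $H^2$ to $(H^2)^\perp$ via these bases. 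The bound $|(R_m)_{jk}|\le C_m(j+k+1)^{-2}$ gives
\[
  \sum_{j,k}|(R_m)_{jk}|^2\le C_m^2\sum_{N\ge1}N\cdot N^{-4}<\infty,
\]
so $R_m\in\HS$. Also $R_1=0$ because $r_1\equiv0$.

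The argument is elementary. The only step needing care is the sign and index bookkeeping that identifies the Hankel entries with $\widehat{b^m}(-(j+k+1))$, so that the leading term is exactly $-\frac1{2\pi i}\Gamma$ with the same constant for every $m$.
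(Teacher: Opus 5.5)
Your proposal is correct and follows the paper's proof essentially step for step. One integration by parts gives the recursion, $A_0(n)=0$ gives $R_1=0$ and the $O(n^{-2})$ remainder, and the Hankel entries $\widehat{b^m}(-(j+k+1))$ produce $-\frac{1}{2\pi i}\Gamma$ plus a Hilbert--Schmidt remainder, estimated by summing along antidiagonals. Your explicit inductive bound $|A_{m-1}(n)|\le C'_{m-1}/|n|$ is just a slightly more precise rendering of the paper's remark that every term beyond the boundary term carries at least two factors of $n^{-1}$.
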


\begin{proof}
For \(n\neq0\), recall that
\[
 A_m(n)
 =
 \widehat{b^m}(n)
 =
 \frac{1}{2\pi}
 \int_0^{2\pi}
 \left(1-\frac{\theta}{2\pi}\right)^m
 e^{-in\theta}\,d\theta.
\]
Integrating by parts once, and using
\[
 b(0)=1,
 \qquad
 b(2\pi)=0,
 \qquad
 (b^m)'(\theta)
 =
 -\frac{m}{2\pi}b^{m-1}(\theta),
\]
gives
\begin{equation}\label{eq:Amrec2}
 A_m(n)
 =
 \frac{1}{2\pi i n}
 -
 \frac{m}{2\pi i n}A_{m-1}(n).
\end{equation}
This recurrence already contains the required asymptotic information.
Indeed, \(A_0(n)=0\) for \(n\neq0\), so
\[
 A_1(n)=\frac{1}{2\pi i n}.
\]
If \(m\ge2\), iterating \eqref{eq:Amrec} shows that the boundary term
\((2\pi i n)^{-1}\) remains the leading term, while every further term
contains at least two factors of \(n^{-1}\). Hence
\[
 A_m(n)
 =
 \frac{1}{2\pi i n}+r_m(n),
 \qquad
 |r_m(n)|\le \frac{C_m}{n^2},
 \qquad n\neq0.
\]

We now pass from the Fourier coefficients to the Hankel matrix.
Relative to the standard basis \(\{z^k\}_{k\ge0}\) of \(H^2\) and
\(\{z^{-j-1}\}_{j\ge0}\) of \((H^2)^\perp\),
\[
 (H_{b^m})_{jk}
 =
 \widehat{b^m}(-(j+k+1)).
\]
Applying the preceding estimate with
\[
 n=-(j+k+1)
\]
gives
\[
 (H_{b^m})_{jk}
 =
 -\frac{1}{2\pi i}\frac{1}{j+k+1}
 +
 r_m(-(j+k+1)).
\]
Thus
\[
 H_{b^m}
 =
 -\frac{1}{2\pi i}\Gamma+R_m,
\]
where
\[
 |(R_m)_{jk}|
 \le
 \frac{C_m}{(j+k+1)^2}.
\]
Consequently,
\[
 \|R_m\|_{\HS}^2
 \le
 C_m^2
 \sum_{j,k\ge0}\frac{1}{(j+k+1)^4}.
\]
Grouping together the pairs with \(j+k+1=N\), of which there are
exactly \(N\), we obtain
\[
 \sum_{j,k\ge0}\frac{1}{(j+k+1)^4}
 =
 \sum_{N\ge1}\frac{N}{N^4}
 =
 \sum_{N\ge1}\frac1{N^3}
 <\infty.
\]
Hence \(R_m\in\HS\).

For \(m=1\), \eqref{eq:Amrec} and \(A_0(n)=0\) give the exact formula
\(
 A_1(n)={1}/{2\pi i n}
\),
so in this case \(R_1=0\).
\end{proof}

\begin{theorem}[Polynomial Toeplitz defect]\label{thm:L3}
Let $p\in\mathbb C[v]$ satisfy $p(0)=p(1)=0$, and write
\begin{equation}\label{eq:qdef}
  p(v)=v(v-1)q(v).
\end{equation}
Then
\begin{equation}\label{eq:L3}
  p(T_b)-T_{p\circ b}
  =-\frac1{4\pi^2}q(T_b)\Gamma^2+K_p,
  \qquad K_p\in\HS.
\end{equation}
For $p(v)=v-v^2$ the remainder vanishes and
\(
  T_{b^2}-T_b^2=\frac1{4\pi^2}\Gamma^2
\).
\end{theorem}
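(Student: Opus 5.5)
The plan is to iterate the semi-commutator identity of Lemma~\ref{lem:semicomm} with $c=b^m$, feeding in the Hankel asymptotics of Lemma~\ref{lem:hankellead}. Since the sawtooth $b$ is real-valued, $\bar b=b$, so Lemma~\ref{lem:semicomm} gives
\[
  T_bT_{b^m}-T_{b^{m+1}}=-H_b^*H_{b^m}.
\]
By Lemma~\ref{lem:hankellead} with $m=1$ we have exactly $H_b=-\frac1{2\pi i}\Gamma$ (because $R_1=0$). As $\Gamma$ is real symmetric, this gives $H_b^*=\frac1{2\pi i}\Gamma$. Hence, using $H_{b^m}=-\frac1{2\pi i}\Gamma+R_m$,
\[
  T_bT_{b^m}-T_{b^{m+1}}=-\frac1{4\pi^2}\Gamma^2+S_m,
  \qquad S_m=-\frac1{2\pi i}\Gamma R_m\in\HS .
\]
Here $S_m\in\HS$ because $\Gamma$ is bounded (Hilbert's inequality) and $\HS$ is an ideal. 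Moreover $S_1=0$.

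Next I would define the defects $D_k=T_b^k-T_{b^k}$. They satisfy $D_0=D_1=0$ and the recursion
\[
  D_{k+1}=T_bD_k+\bigl(T_bT_{b^k}-T_{b^{k+1}}\bigr)
        =T_bD_k-\frac1{4\pi^2}\Gamma^2+S_k .
\]
Induction on $k$, using that $T_b$ is bounded so that $T_b\HS\subset\HS$, gives
\[
  D_k=-\frac1{4\pi^2}\bigl(I+T_b+\cdots+T_b^{k-2}\bigr)\Gamma^2+K_k,
  \qquad K_k\in\HS,\ k\ge2 .
\]
For $k=2$ this is exact with $K_2=0$.

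Now write $p(v)=\sum_k c_kv^k$ with $c_0=p(0)=0$. By linearity, $p(T_b)-T_{p\circ b}=\sum_k c_kD_k$. Modulo $\HS$, this equals $-\frac1{4\pi^2}r(T_b)\Gamma^2$, where
\[
  r(v)=\sum_{k\ge2}c_k\,\frac{v^{k-1}-1}{v-1}=\sum_{k\ge1}c_k\,\frac{v^{k-1}-1}{v-1}.
\]
The $k=1$ term vanishes, which justifies the second sum. The remaining step is the polynomial identity $r=q$. Summing gives $r(v)=\bigl(p(v)/v-p(1)\bigr)/(v-1)$, using $c_0=0$. Since $p(1)=0$, this is $p(v)/(v(v-1))=q(v)$, which proves \eqref{eq:L3} with $K_p=\sum_k c_kK_k$.

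For $p(v)=v-v^2$ we have $q=-1$. Then $p(T_b)-T_{p\circ b}=-D_2$, and $D_2=T_b^2-T_{b^2}=-\frac1{4\pi^2}\Gamma^2$ holds exactly because $S_1=0$. This gives $T_{b^2}-T_b^2=\frac1{4\pi^2}\Gamma^2$ with vanishing remainder.

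The only delicate points are bookkeeping. One must track the conjugation in $H_b^*$ (the sign of $1/(2\pi i)$), and one must check that the telescoping coefficient polynomial is exactly $q$. I expect the sign and constant in $H_b^*H_b=\frac1{4\pi^2}\Gamma^2$ to be the main place where an error could creep in.
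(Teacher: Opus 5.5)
Your proposal is correct and takes essentially the same route as the paper. The paper writes the telescoping identity $D_n=-\sum_{m=1}^{n-1}T_b^{n-m-1}H_{\bar b}^*H_{b^m}$ directly, which is what your recursion $D_{k+1}=T_bD_k+(T_bT_{b^k}-T_{b^{k+1}})$ unrolls to; it then inserts $-H_b^*H_{b^m}=-\frac1{4\pi^2}\Gamma^2+(\text{element of }\mathcal S_2)$ and identifies the coefficient polynomial as $q$ by the same geometric-sum computation. Your sign tracking ($H_b^*=\frac1{2\pi i}\Gamma$, $S_1=0$) and your treatment of the exact case $p(v)=v-v^2$ are accurate.
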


\begin{proof}
Put $D_n=T_b^n-T_{b^n}$.  The telescoping identity
\[
 D_n=-\sum_{m=1}^{n-1}T_b^{n-m-1}H_{\bar b}^*H_{b^m}
\]
follows from Lemma \ref{lem:semicomm}.  Since $b$ is real-valued, Lemma \ref{lem:hankellead} gives
\[
 -H_{\bar b}^*H_{b^m}
 =-\frac1{4\pi^2}\Gamma^2+E_m,
 \qquad E_m\in\HS,
\]
and hence
\begin{equation}\label{eq:Dn}
 D_n=-\frac1{4\pi^2}\left(\sum_{k=0}^{n-2}T_b^k\right)\Gamma^2+K_n,
 \qquad K_n\in\HS.
\end{equation}
If
\(
  p(v)=\sum_{n=1}^d c_n v^n
\),
then
\(
  p(T_b)-T_{p\circ b}
  =
  \sum_{n=2}^d c_nD_n
\).
Substituting \eqref{eq:Dn} gives
\[
  p(T_b)-T_{p\circ b}
  =
  -\frac{1}{4\pi^2}s(T_b)\Gamma^2+K_p,
  \qquad K_p\in\HS,
\]
where
\[
  s(v)
  =
  \sum_{n=2}^d c_n\sum_{k=0}^{n-2}v^k.
\]
It remains to identify \(s\). Since
\(
  (v-1)\sum_{k=0}^{n-2}v^k=v^{n-1}-1
\),
we have
\[
\begin{aligned}
  (v-1)s(v)
  &=
  \sum_{n=2}^d c_n(v^{n-1}-1)=
  \sum_{n=2}^d c_nv^{n-1}
  -
  \sum_{n=2}^d c_n.
\end{aligned}
\]
Now \(p(1)=0\) implies
\(
  -\sum_{n=2}^d c_n=c_1
\),
while \(p(0)=0\) gives
\(
  \frac{p(v)}v
  =
  c_1+\sum_{n=2}^d c_nv^{n-1}
\).
Hence
\(
  (v-1)s(v)=\frac{p(v)}v
\).
Thus
\[
  p(T_b)-T_{p\circ b}
  =
  -\frac{1}{4\pi^2}q(T_b)\Gamma^2+K_p.
\]
\end{proof}

The relation with the Mellin symbol is especially simple.  Since the shifted Hilbert matrix $W(\omega_\Gamma)$ differs from $\Gamma$ by a Hilbert--Schmidt operator and Theorem \ref{thm:HSproduct} applies to $(\omega_\Gamma,\omega_\Gamma)$,
\begin{equation}\label{eq:Gamma2symbol}
 \pi(\Gamma^2)=\pi(W(\omega_\Gamma^{\star2})),
 \qquad
 \frac{\sigma_{\omega_\Gamma^{\star2}}(\xi)}{4\pi^2}
 =\frac14\operatorname{sech}^2(\pi\xi)
 =\thetaM(\xi)(1-\thetaM(\xi)),
\end{equation}
where
\begin{equation}\label{eq:theta}
  \thetaM(\xi)=\frac12-\frac12\tanh(\pi\xi)=\frac1{1+e^{2\pi\xi}}.
\end{equation}
\section{The principal-value orbit: stability, density, and intertwining}
\label{sec:poleorbit}
The second internal ingredient is the action of the singular kernel
\[
  \omega_b(u)=\frac1{2\pi i(1-u)},\qquad u\ne1,
  \qquad \omega_b(1)=0.
\]
It does not belong to $\Wiener$, so the ordinary $L^1$ Mellin convolution theorem does not apply.  The singularity is handled in principal value.

We begin by introducing a smooth logarithmic class on which the principal-value convolution with the pole kernel is well defined and preserves the weighted regularity needed later.
For $m\in\N_0$ write $\kappa\in\Finf_m$ if $f_\kappa\in C^\infty(\mathbb R)$ and for every $j\ge0$ there is $K_j$ such that
\begin{equation}\label{eq:Finf}
  |f_\kappa^{(j)}(t)|\le K_j\mu_m(t).
\end{equation}
Then $\Finf_m\subset\FBV_m$ (take $J=\varnothing$ and use the bounds for derivative orders $0$ and $1$).

Define the Euler derivative $\Theta=v\,d/dv$ and
\[
 w_m(v)=(1+|\log v|)^m\min(1,v^{-1}).
\]
Since $e^{t/2}w_m(e^t)=(1+|t|)^m e^{t/2}\min(1,e^{-t})
 =(1+|t|)^m e^{-|t|/2}=\mu_m(t)$, the chain rule gives the following useful equivalence.

\begin{lemma}[Logarithmic and Euler derivatives]\label{lem:euler}
A kernel $\kappa$ belongs to $\Finf_m$ if and only if $\kappa\in C^\infty(0,\infty)$ and, for every $j\ge0$,
\[
 |\Theta^j\kappa(v)|\le L_jw_m(v).
\]
If \eqref{eq:Finf} holds for $j=0,1,2$, then
\begin{equation}\label{eq:KmfromF}
 |\kappa(v)|+|v\kappa'(v)|+|v^2\kappa''(v)|
 \le \left(K_2+3K_1+\frac94K_0\right)w_m(v).
\end{equation}
\end{lemma}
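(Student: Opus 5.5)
The plan is to pass to logarithmic coordinates and translate the Euler operator $\Theta$ into the ordinary derivative. With $v=e^t$ and $f_\kappa(t)=e^{t/2}\kappa(e^t)$, we have $\kappa(e^t)=e^{-t/2}f_\kappa(t)$. Since $\frac{d}{dt}=v\frac{d}{dv}=\Theta$ under this substitution, we get
\[
 (\Theta^j\kappa)(e^t)=\frac{d^j}{dt^j}\bigl(e^{-t/2}f_\kappa(t)\bigr)=e^{-t/2}\left(\frac{d}{dt}-\frac12\right)^{j}f_\kappa(t),
\]
the last equality by induction using $\frac{d}{dt}(e^{-t/2}h)=e^{-t/2}(h'-\tfrac12 h)$. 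Expanding binomially gives $e^{t/2}(\Theta^j\kappa)(e^t)=\sum_{i=0}^j\binom ji(-\tfrac12)^{j-i}f_\kappa^{(i)}(t)$. Conversely, $f_\kappa^{(j)}=(\frac{d}{dt})^j(e^{t/2}\kappa(e^t))=e^{t/2}(\Theta+\tfrac12)^j\kappa$ evaluated at $e^t$. Smoothness of $\kappa$ on $(0,\infty)$ is equivalent to smoothness of $f_\kappa$ on $\mathbb R$, because $t\mapsto e^t$ is a diffeomorphism.

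Next we convert the weights using the identity $e^{t/2}w_m(e^t)=\mu_m(t)$ recorded just before the lemma. If $|f_\kappa^{(i)}|\le K_i\mu_m$ for $i\le j$, the binomial expansion gives
\[
|\Theta^j\kappa(v)|\le\Bigl(\sum_i\binom ji2^{i-j}K_i\Bigr)w_m(v).
\]
Conversely, if $|\Theta^i\kappa|\le L_iw_m$, the expansion of $(\Theta+\frac12)^j$ bounds $|f_\kappa^{(j)}|$ by $\sum_i\binom ji2^{i-j}L_i\,\mu_m$. This proves the equivalence.

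For \eqref{eq:KmfromF}, express $v\kappa'$ and $v^2\kappa''$ through $\Theta$: $v\kappa'=\Theta\kappa$ and $v^2\kappa''=\Theta^2\kappa-\Theta\kappa$. The formulas above give $\Theta\kappa=e^{-t/2}(f'-\tfrac12 f)$ and $\Theta^2\kappa=e^{-t/2}(f''-f'+\tfrac14 f)$, so $v^2\kappa''=e^{-t/2}(f''-2f'+\tfrac34 f)$. The triangle inequality then bounds $|\kappa|+|v\kappa'|+|v^2\kappa''|$ by $e^{-t/2}\bigl(K_2+3K_1+(1+\tfrac12+\tfrac34)K_0\bigr)\mu_m$, which is exactly $(K_2+3K_1+\tfrac94K_0)w_m(v)$. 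The argument is routine; the only point needing care is tracking the $\frac12$-shifts in the conjugation $\frac{d}{dt}\mapsto\frac{d}{dt}-\frac12$ correctly, so that the constants come out exactly as stated.
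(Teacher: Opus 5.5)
Your proposal is correct and follows essentially the same route as the paper. Both arguments use the conjugation identities $(\Theta^j\kappa)(e^t)=e^{-t/2}(\partial_t-\tfrac12)^j f_\kappa(t)$ and $f_\kappa^{(j)}=e^{t/2}(\partial_t+\tfrac12)^j\kappa(e^t)$, transfer weights through $e^{t/2}w_m(e^t)=\mu_m(t)$, and get the constant $K_2+3K_1+\tfrac94K_0$ from $v^2\kappa''=\Theta^2\kappa-\Theta\kappa=e^{-t/2}(f''-2f'+\tfrac34 f)$; your explicit binomial constants $\sum_i\binom ji 2^{i-j}K_i$ just spell out what the paper states more briefly as the equivalence of the two families of estimates.
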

\begin{proof}
Put
\(
  g(t)=\kappa(e^t)
\).
Since \(v=e^t\), the chain rule gives
\[
  g'(t)=e^t\kappa'(e^t)
       =(\Theta\kappa)(e^t),
\]
and, by iteration,
\[
  g^{(j)}(t)=(\Theta^j\kappa)(e^t),
  \qquad j\ge0.
\]
On the other hand,
\(
  f_\kappa(t)=e^{t/2}g(t)
\).
Therefore repeated differentiation yields
\[
  f_\kappa^{(j)}(t)
  =
  e^{t/2}\left(\partial_t+\frac12\right)^j g(t).
\]
Thus bounds for the Euler derivatives
\((\Theta^\ell\kappa)(e^t)=g^{(\ell)}(t)\), \(0\le \ell\le j\),
give the corresponding bounds for \(f_\kappa^{(j)}\).
Conversely,
\[
  g(t)=e^{-t/2}f_\kappa(t),
\]
and therefore
\[
  g^{(j)}(t)
  =
  e^{-t/2}
  \left(\partial_t-\frac12\right)^j f_\kappa(t).
\]
Since \(g^{(j)}(t)=(\Theta^j\kappa)(e^t)\), we obtain
\[
  (\Theta^j\kappa)(e^t)
  =
  e^{-t/2}
  \left(\partial_t-\frac12\right)^j f_\kappa(t).
\]
Using
\[
  e^{t/2}w_m(e^t)=\mu_m(t),
  \qquad
  e^{-t/2}\mu_m(t)=w_m(e^t),
\]
the two families of estimates are therefore equivalent.
For the quantitative estimate, the case \(j=0\) gives
\[
  \kappa(e^t)=e^{-t/2}f_\kappa(t).
\]
For \(j=1\),
\[
  v\kappa'(v)
  =
  (\Theta\kappa)(v)
  =
  e^{-t/2}
  \left(
    f_\kappa'(t)-\frac12f_\kappa(t)
  \right).
\]
For the second derivative, note that
\(
  \Theta^2\kappa
  =
  v\kappa'+v^2\kappa''
\),
hence
\(
  v^2\kappa''
  =
  \Theta^2\kappa-\Theta\kappa
\).
Since
\[
  \Theta^2\kappa(e^t)
  =
  e^{-t/2}
  \left(
    f_\kappa''-f_\kappa'
    +\frac14f_\kappa
  \right),
\]
we obtain
\[
  v^2\kappa''
  =
  e^{-t/2}
  \left(
    f_\kappa''-2f_\kappa'
    +\frac34f_\kappa
  \right).
\]
If \eqref{eq:Finf} holds for \(j=0,1,2\), then
\[
\begin{aligned}
 |\kappa(v)|
 &\le K_0w_m(v),\\
 |v\kappa'(v)|
 &\le
 \left(K_1+\frac12K_0\right)w_m(v),\\
 |v^2\kappa''(v)|
 &\le
 \left(K_2+2K_1+\frac34K_0\right)w_m(v).
\end{aligned}
\]
Adding these inequalities gives
\[
  |\kappa(v)|+|v\kappa'(v)|+|v^2\kappa''(v)|
  \le
  \left(
    K_2+3K_1+\frac94K_0
  \right)w_m(v),
\]
which is \eqref{eq:KmfromF}.
\end{proof}

The pole profile is
\begin{equation}\label{eq:poleprofile}
 p(t)=f_{\omega_b}(t)=\frac{i}{4\pi\sinh(t/2)},\quad t\ne0,
 \qquad p(0)=0.
\end{equation}
It is odd, satisfies $p(t)=i/(2\pi t)+O(t)$ as $t\to0$, and decays like $e^{-|t|/2}$ at infinity.  Moreover
\begin{equation}\label{eq:poleFT}
 \PV\int_{\mathbb R}p(t)e^{i\xi t}\,dt=-\frac12\tanh(\pi\xi)=\sigma_{\omega_b}(\xi).
\end{equation}
{
Since the kernel \(\omega_b\) has a pole at \(u=1\), the integral
defining multiplicative convolution in \eqref{eq:mult-conv} is not
absolutely convergent when one of the factors is \(\omega_b\).
Accordingly, for a sufficiently regular kernel \(\kappa\), we interpret
\(\omega_b\star\kappa\) in the logarithmically symmetric principal-value
sense
\begin{equation}\label{eq:pvstar}
  (\omega_b\star\kappa)(w)
  =
  \lim_{\varepsilon\downarrow0}
  \int_{(0,\infty)\setminus(e^{-\varepsilon},e^\varepsilon)}
  \omega_b(u)\,
  \kappa\!\left(\frac wu\right)\frac{du}{u},
\end{equation}
whenever the limit exists.  Under the change of variables \(u=e^t\),
this is precisely the symmetric cutoff \(|t|>\varepsilon\) in
logarithmic coordinates.  The proof below also shows that, on the
class considered here, the same value is obtained from an additively
symmetric excision about \(u=1\).
}

\begin{theorem}[Stability under the pole convolution]\label{thm:pole-invariance}
If \(\kappa\in\Finf_m\), then the principal-value convolution
\(\omega_b\star\kappa\), understood in the sense of
\eqref{eq:pvstar}, exists pointwise and
\(
  \omega_b\star\kappa\in\Finf_{m+1}
\).
Furthermore,
\begin{equation}\label{eq:polemult}
  \sigma_{\omega_b\star\kappa}=\sigma_{\omega_b}\sigma_\kappa,
  \qquad
  \sigma_{\omega_b\star\kappa+\frac12\kappa}=\thetaM\,\sigma_\kappa.
\end{equation}
\end{theorem}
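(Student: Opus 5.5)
Throughout we work in logarithmic coordinates, where everything becomes a statement about convolution on $\mathbb R$. With $u=e^s$ and $w=e^t$, the cutoff in \eqref{eq:pvstar} is exactly $|s|>\varepsilon$. Since $f_{\omega_b\star\kappa}(t)=e^{t/2}(\omega_b\star\kappa)(e^t)$ and $f=f_\kappa$, the integrand becomes $p(s)f(t-s)$. Hence the claim is that
\[
  F(t)=\lim_{\varepsilon\downarrow0}\int_{|s|>\varepsilon}p(s)f(t-s)\,ds
\]
exists and satisfies $|F^{(j)}(t)|\le K'_j\mu_{m+1}(t)$. To show existence, I would split $p=p\mathbf 1_{|s|\le1}+p\mathbf 1_{|s|>1}$. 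The far part is absolutely integrable, with $|p(s)|\le Ce^{-|s|/2}$. For the near part, oddness of $p$ lets us rewrite it as
\[
  \int_{\varepsilon<|s|\le1}p(s)\bigl(f(t-s)-f(t)\bigr)\,ds .
\]
Since $|p(s)|\le C/|s|$ and $|f(t-s)-f(t)|\le|s|\sup_{|r|\le1}|f'(t-r)|\le C|s|\mu_m(t)$ for $|s|\le1$ (because $\mu_m(t-r)\le C\mu_m(t)$ when $|r|\le1$), the limit exists absolutely. The same identity gives the bound $C K_1\mu_m(t)$ for the near part. The additively symmetric excision about $u=1$ yields the same value, because the two excised regions differ by sets of size $O(\varepsilon^2)$ on which the integrand is $O(1/\varepsilon)$.

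For regularity, derivatives commute with the principal value, because the representation above may be differentiated under the integral; this uses $f^{(j+1)}$ bounded by $\mu_m$. Thus $F^{(j)}=\PV(p*f^{(j)})$, and it suffices to prove the bound for $j=0$ with $f$ replaced by $f^{(j)}$. The near part is $O(\mu_m(t))$ as above. The far part is bounded by $\int_{|s|>1}e^{-|s|/2}\mu_m(t-s)\,ds\le(\mu_0*\mu_m)(t)\le C\mu_{m+1}(t)$ by Lemma~\ref{lem:weight-conv}. This loss of one power is exactly why the index rises to $m+1$. Hence $\omega_b\star\kappa\in\Finf_{m+1}$.

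For the symbol identity \eqref{eq:polemult}, since $F\in L^1$ by the weighted bound, $\sigma_{\omega_b\star\kappa}=\widehat F$ by Proposition~\ref{prop:dictionary}. I would write $F=p_1*f+\PV(p_0*f)$ with $p_0=p\mathbf 1_{|s|\le1}$ and $p_1=p\mathbf 1_{|s|>1}\in L^1$. Then $\widehat{p_1*f}=\widehat{p_1}\widehat f$. For the singular piece, note that $p_0=\frac{i}{2\pi s}\mathbf 1_{|s|\le1}+r(s)$ with $r$ bounded, odd and compactly supported. The distribution $\PV p_0$ has compact support, so its Fourier transform is the bounded function $\xi\mapsto\lim_\varepsilon\int_{\varepsilon<|s|<1}p(s)e^{i\xi s}\,ds$, and the convolution theorem for a compactly supported distribution against the Schwartz-like $f$ (smooth, with all derivatives decaying exponentially) gives $\widehat{\PV(p_0*f)}=\widehat{\PV p_0}\,\widehat f$. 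Adding the two pieces yields $\widehat F=\widehat{\PV p}\,\widehat f=\sigma_{\omega_b}\sigma_\kappa$ by \eqref{eq:poleFT}. An alternative route is Fubini: pair $F_\varepsilon=(p\mathbf 1_{|s|>\varepsilon})*f$ with $e^{i\xi t}$ and use dominated convergence in $\varepsilon$, justified by the uniform near-part bound $CK_1\mu_m$. The second identity in \eqref{eq:polemult} is then linear algebra: $\sigma_{\omega_b}+\tfrac12=\tfrac12-\tfrac12\tanh(\pi\xi)=\thetaM$ by \eqref{eq:theta}.

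The main obstacle I anticipate is the justification of interchanging the principal-value limit with differentiation and with the Fourier integral, uniformly in $t$. I would handle it through the dominated-convergence route, using the uniform majorant $C\mu_m(t)+C\mu_{m+1}(t)$ for the truncations $F_\varepsilon$. This majorant is derived exactly as in the existence step.
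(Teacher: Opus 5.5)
Your proposal is correct and follows the paper's proof essentially step by step: the near/far splitting with oddness of $p$ to tame the pole, differentiation under the integral sign, the weight-convolution bound that costs one power ($\mu_m\to\mu_{m+1}$), and passage to the limit in $\widehat{p_\varepsilon}\,\widehat f$ for the symbol identity. The only differences are cosmetic: you pass to the limit by pointwise dominated convergence with a uniform majorant, where the paper shows the truncations are Cauchy in $L^1$. You also compare the two excisions by a measure-times-size estimate, where the paper computes the logarithmic discrepancy exactly.
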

\begin{proof}
Let \(g=f_\kappa\), \(p=f_{\omega_b}\).
For each $j\ge0$, set
\[
  N_j(t)
  =
  \int_0^1
  \bigl(
    g^{(j)}(t-r)-g^{(j)}(t+r)
  \bigr)p(r)\,dr,
\]
and
\[
  F_j(t)
  =
  \int_{|r|>1}
  g^{(j)}(t-r)p(r)\,dr.
\]
We write
\(
  H_j:=N_j+F_j
\).

We first verify that these quantities are well defined. By the
fundamental theorem of calculus,
\[
  |g^{(j)}(t-r)-g^{(j)}(t+r)|
  \le
  2r\sup_{|s-t|\le1}|g^{(j+1)}(s)|,
  \qquad 0<r\le1.
\]
Since $r|p(r)|$ is bounded near the origin, the integral defining
$N_j(t)$ converges absolutely. The far integral also converges
absolutely, because
\[
  |p(r)|\lesssim e^{-|r|/2},
  \qquad |r|>1,
\]
while $g^{(j)}$ satisfies the defining weighted estimate of
$\Finf_m$.

We now identify $H_j$ with the logarithmically symmetric principal
value. For $0<\varepsilon<1$, substituting $r\mapsto-r$ on
$(-1,-\varepsilon)$ and using $p(-r)=-p(r)$ gives
\[
\begin{aligned}
  \int_{\varepsilon<|r|\le1}
  g^{(j)}(t-r)p(r)\,dr
  &=
  \int_\varepsilon^1
  \bigl(
    g^{(j)}(t-r)-g^{(j)}(t+r)
  \bigr)p(r)\,dr\\
  &\longrightarrow
  N_j(t),
  \qquad \varepsilon\to 0.
\end{aligned}
\]
Together with the absolutely convergent far part, this yields
\[
  H_j(t)
  =
  \operatorname{p.v.}
  \int_{\mathbb R}
  g^{(j)}(t-r)p(r)\,dr,
\]
where the principal value is taken with the symmetric cutoff
$|r|>\varepsilon$.

We next identify $H_0$ with the multiplicative principal value
\eqref{eq:pvstar}. Put $w=e^t$. With $u=e^r$,
\[
\begin{aligned}
  \int_{|r|>\varepsilon}
  p(r)g(t-r)\,dr
  &=
  e^{t/2}
  \int_{(0,e^{-\varepsilon})\cup(e^\varepsilon,\infty)}
  \omega_b(u)
  \kappa\!\left(\frac wu\right)\frac{du}{u}.
\end{aligned}
\]
Near $u=1$, write
\[
  \omega_b(u)
  \kappa\!\left(\frac wu\right)\frac1u
  =
  \frac{q_w(u)}{1-u},
  \qquad
  q_w(u)
  :=
  \frac{\kappa(w/u)}{2\pi i\,u}.
\]
Since $\kappa\in C^\infty(0,\infty)$, the function $q_w$ is
Lipschitz in a neighborhood of $u=1$. Decompose
\[
  \frac{q_w(u)}{1-u}
  =
  \frac{q_w(1)}{1-u}
  +
  \frac{q_w(u)-q_w(1)}{1-u}.
\]
The second term is locally bounded, so its limit is independent of the
particular shrinking window. It remains only to compare the two
cutoffs for the singular constant term.

Fix $0<\rho<1$, and set
\[
  \alpha_\varepsilon
  =
  1-e^{-\varepsilon},
  \qquad
  \beta_\varepsilon
  =
  e^\varepsilon-1.
\]
For $\varepsilon$ sufficiently small,
$\alpha_\varepsilon,\beta_\varepsilon<\rho$, and
\[
\begin{aligned}
 &\int_{1-\rho}^{e^{-\varepsilon}}\frac{du}{1-u}
  +
  \int_{e^\varepsilon}^{1+\rho}\frac{du}{1-u}=
  \log\frac{\beta_\varepsilon}{\alpha_\varepsilon}
  =
  \varepsilon.
\end{aligned}
\]
By contrast, for every $0<\eta<\rho$,
\[
  \int_{1-\rho}^{1-\eta}\frac{du}{1-u}
  +
  \int_{1+\eta}^{1+\rho}\frac{du}{1-u}
  =
  0.
\]
Thus the discrepancy between the logarithmically symmetric and the
additively symmetric truncations of the singular term is exactly
$\varepsilon$, and therefore tends to zero. Consequently the two
principal-value prescriptions agree, and
\[
  H_0(t)
  =
  f_{\omega_b\star\kappa}(t).
\]

We next justify differentiation under the integral signs. Fix a
compact interval $I\subset\mathbb R$ and let
\[
  I_1
  :=
  \{s\in\mathbb R:\operatorname{dist}(s,I)\le1\}.
\]
For the near part,
\[
  |g^{(j+1)}(t-r)-g^{(j+1)}(t+r)|
  \le
  2r\sup_{s\in I_1}|g^{(j+2)}(s)|,
\]
for $t\in I$ and $0<r\le1$. Since $r|p(r)|$ is bounded near the
origin, this provides an integrable majorant independent of
$t\in I$. Hence
\[
  N_j'=N_{j+1}.
\]
For the far part,
\[
  |g^{(j+1)}(t-r)p(r)|
  \le
  C_{I,j}(1+|r|)^m e^{-|r|},
  \qquad |r|>1,\quad t\in I,
\]
which is integrable in $r$. Therefore
\[
  F_j'=F_{j+1}.
\]
It follows that
\[
  H_j'=H_{j+1},
\]
and hence, by induction,
\[
  H_0^{(j)}=H_j,
  \qquad j\ge0.
\]

It remains to establish the weighted estimates. If $|s-t|\le1$, then
\[
  \mu_m(s)\le C_m\mu_m(t),
\]
and therefore
\[
  |N_j(t)|
  \le
  C_mK_{j+1}\mu_m(t).
\]
For the far part, enlarging the domain of integration and using the
decay of $p$ for $|r|>1$, we obtain
\[
\begin{aligned}
  |F_j(t)|
  &\le
  CK_j
  \int_{\mathbb R}
  (1+|t-r|)^m
  e^{-(|t-r|+|r|)/2}\,dr.
\end{aligned}
\]
Let $J_t$ denote the closed segment joining $0$ and $t$. Since
\[
  |t-r|+|r|
  =
  |t|+2\operatorname{dist}(r,J_t),
\]
the contribution of $J_t$ is
\[
  O\!\left(
    (1+|t|)^{m+1}e^{-|t|/2}
  \right),
\]
while the two complementary rays contribute
\[
  O\!\left(
    (1+|t|)^m e^{-|t|/2}
  \right).
\]
Thus
\[
  |F_j(t)|
  \le
  C_{m,j}K_j\mu_{m+1}(t).
\]
Since $\mu_m\le\mu_{m+1}$, we conclude that
\[
  |H_0^{(j)}(t)|
  =
  |H_j(t)|
  \le
  C_{m,j}(K_j+K_{j+1})\mu_{m+1}(t),
  \qquad j\ge0.
\]
Hence
\[
  \omega_b\star\kappa\in\Finf_{m+1}.
\]

It remains to identify the Mellin symbol. For $\varepsilon>0$, set
\[
  p_\varepsilon
  :=
  p\,\mathbf 1_{\{|r|>\varepsilon\}},
  \qquad
  h_\varepsilon
  =
  p_\varepsilon*g.
\]
Then $p_\varepsilon,g\in L^1(\mathbb R)$ and
\[
  \widehat h_\varepsilon
  =
  \widehat p_\varepsilon\,\widehat g.
\]
For $0<\delta<\varepsilon<1$, oddness of $p$ and the fundamental
theorem of calculus give
\[
\begin{aligned}
  \|h_\delta-h_\varepsilon\|_1
  &\le
  \int_\delta^\varepsilon
  |p(r)|
  \,
  \|g(\cdot-r)-g(\cdot+r)\|_1\,dr\le
  \frac{\|g'\|_1}{\pi}
  (\varepsilon-\delta).
\end{aligned}
\]
Thus $(h_\varepsilon)$ is Cauchy in $L^1(\mathbb R)$ and converges to
some $h\in L^1(\mathbb R)$. On the other hand, by the principal-value
construction,
\[
  h_\varepsilon(t)\longrightarrow H_0(t)
\]
for every $t\in\mathbb R$. Choosing an $L^1$-convergent subsequence
which converges almost everywhere, we obtain
\[
  h=H_0
\]
almost everywhere. Hence
\[
  \|h_\varepsilon-H_0\|_1\longrightarrow0,
\]
and consequently
\[
  \widehat h_\varepsilon
  \longrightarrow
  \widehat H_0
\]
uniformly on $\mathbb R$.

Moreover, since $p$ is odd,
\[
  \widehat p_\varepsilon(\xi)
  =
  2i\int_\varepsilon^\infty
  p(r)\sin(\xi r)\,dr.
\]
For each fixed $\xi$, the function
$p(r)\sin(\xi r)$ is integrable on $(0,\infty)$: near the origin this
follows from $p(r)=O(r^{-1})$ and $\sin(\xi r)=O(r)$, while at
infinity it follows from the exponential decay of $p$. Therefore,
for each fixed $\xi$, dominated convergence gives
\[
  \widehat p_\varepsilon(\xi)
  \longrightarrow
  \operatorname{p.v.}
  \int_{\mathbb R}p(r)e^{i\xi r}\,dr
  =
  \sigma_{\omega_b}(\xi).
\]
Since
\[
  \widehat h_\varepsilon
  =
  \widehat p_\varepsilon\,\widehat g,
  \qquad
  \widehat g=\sigma_\kappa,
\]
passing to the limit for each fixed $\xi$ yields
\[
  \sigma_{\omega_b\star\kappa}(\xi)
  =
  \widehat H_0(\xi)
  =
  \sigma_{\omega_b}(\xi)\sigma_\kappa(\xi).
\]
Finally,
\[
  \sigma_{\omega_b}(\xi)+\frac12
  =
  \widetilde\vartheta(\xi),
\]
so
\[
  \sigma_{\omega_b\star\kappa+\frac12\kappa}
  =
  \widetilde\vartheta\,\sigma_\kappa.
\]
This proves \eqref{eq:polemult}.
\end{proof}

\begin{remark}[Sharpness of the weight loss]\label{rem:polesharp}
The passage from \(\Finf_m\) to \(\Finf_{m+1}\) in
Theorem~\ref{thm:pole-invariance} is, in general, unavoidable.
Indeed, for \(\kappa=\omega_\Gamma\in\Finf_0\),
\[
\begin{aligned}
  \sigma_{\omega_b}(\xi)\sigma_{\omega_\Gamma}(\xi)
  &=
  -\frac{\pi}{2}
  \tanh(\pi\xi)\text{sech}(\pi\xi)=
  \frac12\frac{d}{d\xi}\text{sech}(\pi\xi).
\end{aligned}
\]
With our Fourier convention, this gives
\[
  f_{\omega_b\star\omega_\Gamma}(t)
  =
  \frac{it}{4\pi\cosh(t/2)}.
\]
Equivalently,
\[
  (\omega_b\star\omega_\Gamma)(v)
  =
  \frac{i\log v}{2\pi(1+v)}.
\]
Hence
\[
  \omega_b\star\omega_\Gamma
  \in
  \Finf_1\setminus\Finf_0.
\]
Thus the loss of one polynomial weight inin Theorem~\ref{thm:pole-invariance} cannot in general be removed.
\end{remark}
The preceding theorem shows that the operation
\[
  \kappa\longmapsto
  \omega_b\star\kappa+\frac12\kappa
\]
preserves the smooth weighted classes and, on Mellin symbols,
corresponds to multiplication by \(\thetaM\).  We now iterate this
operation from a suitable initial kernel.  The resulting orbit remains
regular, while its Mellin symbols generate a dense subalgebra of
\(C_0(\mathbb R)\).  This will provide the class of kernels used below
to connect the Mellin calculus with the Toeplitz algebra.  
Set
\[
 \kappa_0=\omega_\Gamma^{\star2},
 \qquad
 \kappa_{r+1}=\omega_b\star\kappa_r+\frac12\kappa_r,
 \qquad r\ge0,
\]
and let $D$ be the complex $\star$-algebra generated by
\[
  \omega_C,\quad \omega_C^\vee,\quad \omega_\Gamma,
  \quad\text{and}\quad \{\kappa_r:r\ge0\}.
\]
The pole kernel itself is not an element of $D$; it enters only through the principal-value recursion.

\begin{proposition}[Closure of the domain]\label{prop:Dclosure}
Every element of $D$ belongs to $\A$.  More precisely, $\kappa_r\in\Finf_{r+1}$, finite $\star$-products of generators belong to some $\FBV_m$, and finite linear combinations remain in some $\FBV_M$.  Moreover $D$ is closed under $\vee$, every $\kappa\in D$ belongs to $\Wiener$ and satisfies $(R)$, and the product defect is Hilbert--Schmidt for every ordered pair in $D$.
\end{proposition}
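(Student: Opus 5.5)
The plan is to verify membership of the generators in suitable classes and then invoke the closure results already proved. First, $\omega_\Gamma\in\Finf_0$: since $f_{\omega_\Gamma}(t)=\tfrac12\operatorname{sech}(t/2)$ is smooth and all its derivatives are $O(e^{-|t|/2})$, we get $\omega_\Gamma\in\Finf_0\subset\FBV_0$. Also $\omega_C\in\FBV_0$, as recorded after Definition~\ref{def:kernel-classes}, with $J=\{0\}$ and $|Df_{\omega_C}|\le \mu_0(dt+\delta_0)$. By Proposition~\ref{prop:closure}, $\omega_C^\vee\in\FBV_0$.

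For the recursion we need the seed to lie in a smooth class. We show $\kappa_0=\omega_\Gamma^{\star2}\in\Finf_1$ directly, since Proposition~\ref{prop:closure} only gives $\FBV_1$. Its profile is $f_0=f_{\omega_\Gamma}*f_{\omega_\Gamma}$, and differentiation falls on one factor: $f_0^{(j)}=f_{\omega_\Gamma}^{(j)}*f_{\omega_\Gamma}$. Lemma~\ref{lem:weight-conv} then gives $|f_0^{(j)}|\le C_j\mu_1$. An induction using Theorem~\ref{thm:pole-invariance} yields $\omega_b\star\kappa_r\in\Finf_{r+2}$. Since $\Finf_{r+1}\subset\Finf_{r+2}$ (because $\mu_m\le\mu_{m+1}$) and the class is linear, $\kappa_{r+1}\in\Finf_{r+2}$. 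This proves $\kappa_r\in\Finf_{r+1}\subset\FBV_{r+1}$.

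Next, finite $\star$-products of generators lie in some $\FBV_m$ by iterating Proposition~\ref{prop:closure}, which raises the index by at most $m+n+1$ at each step. For linear combinations we use the inclusions $\FBV_m\subset\FBV_M$ for $m\le M$, again because $\mu_m\le\mu_M$. We then take the union of the finitely many exceptional sets and the maximum constant; the triangle inequality for $|Df|$ handles the variation, including the atoms. Hence $D\subset\A$.

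Closure of $D$ under $\vee$ is a statement about the generated algebra. The map $\vee$ is conjugate-linear and anti-multiplicative, $(\kappa\star\eta)^\vee=\eta^\vee\star\kappa^\vee=\kappa^\vee\star\eta^\vee$ by commutativity. It therefore suffices to check the generators. We have $\omega_C\leftrightarrow\omega_C^\vee$ directly. For the others, $\omega_\Gamma^\vee(u)=\frac{1}{u(1+1/u)}=\omega_\Gamma$.

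The one genuinely new check, and the expected obstacle, is $\kappa_r^\vee$. On symbols, $\sigma_{\kappa_r^\vee}=\overline{\sigma_{\kappa_r}}=\thetaM^r\sigma_{\omega_\Gamma}^2$, which coincides with $\sigma_{\kappa_r}$ because $\thetaM$ and $\operatorname{sech}$ are real. Since the profiles lie in $L^1$ and are continuous, Fourier injectivity (Proposition~\ref{prop:dictionary}) gives $f_{\kappa_r^\vee}=f_{\kappa_r}$ everywhere, hence $\kappa_r^\vee=\kappa_r\in D$. Alternatively, one can verify $f_{\kappa_r}(-t)=\overline{f_{\kappa_r}(t)}$ inductively from the oddness of $p$ and $\overline{p}=-p$.

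Finally, every $\kappa\in D$ lies in $\Wiener$ and satisfies (R) by Lemma~\ref{lem:regularity}. The product defect is Hilbert--Schmidt for any ordered pair by Theorem~\ref{thm:HSproduct}, since both kernels lie in some $\FBV_m$, $\FBV_n$.
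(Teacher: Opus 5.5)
Your proof is correct. It agrees with the paper's proof except at one step. The shared parts are:
\begin{itemize}
\item the verifications $\omega_\Gamma\in\Finf_0$ and $\kappa_0\in\Finf_1$, via $f_{\kappa_0}^{(j)}=f_{\omega_\Gamma}^{(j)}*f_{\omega_\Gamma}$ and Lemma~\ref{lem:weight-conv};
\item the induction through Theorem~\ref{thm:pole-invariance};
\item the final appeals to Proposition~\ref{prop:closure}, Lemma~\ref{lem:regularity} and Theorem~\ref{thm:HSproduct}.
\end{itemize}

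The divergence is the $\vee$-invariance of the orbit kernels $\kappa_r$.

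\emph{The paper's route.} It proves the pointwise identity $(\omega_b\star\kappa)^\vee=\omega_b\star\kappa^\vee$ for every $\kappa\in\Finf_m$. The argument works directly from the symmetric truncations together with $\overline{p(-s)}=p(s)$. The paper does this explicitly because ``the Fourier multiplier identity alone would identify only almost-everywhere classes.''

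\emph{Your route.} You use exactly that Fourier identity: $\sigma_{\kappa_r}=\thetaM^r\sigma_{\omega_\Gamma}^2$ is real by \eqref{eq:polemult}. You then correctly close the almost-everywhere gap. Both $f_{\kappa_r}$ and $f_{\kappa_r^\vee}(t)=\overline{f_{\kappa_r}(-t)}$ are continuous. In fact they are $C^\infty$, since Theorem~\ref{thm:pole-invariance} identifies $H_0=f_{\omega_b\star\kappa}$ at every point. So $L^1$ Fourier uniqueness gives equality everywhere, which is what pointwise sampling by $W$ requires.

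\emph{Comparison.} Your route is shorter once the orbit symbols are known, and it shows the paper's caution is unnecessary here. The paper's route gives more: a structural commutation of $\vee$ with the pole convolution for all $\kappa\in\Finf_m$, not only for orbit elements whose symbols happen to be real. The alternative you mention, checking $f_{\kappa_r}(-t)=\overline{f_{\kappa_r}(t)}$ inductively from the oddness of $p$ and $\overline{p}=-p$, is the paper's argument.

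One minor bookkeeping slip: for a linear combination $\sum c_i\kappa_i$, the admissible constant in \eqref{eq:BV-bound} is $\sum|c_i|K_i$, not the maximum of the $K_i$. The subadditivity argument you describe produces exactly this constant, so nothing else changes.
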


\begin{proof}
The Hilbert kernel satisfies
\(
  f_{\omega_\Gamma}(t)=\frac12\operatorname{sech}(t/2)
\),
and each derivative of this function is bounded by a constant
multiple of \(\mu_0\). Hence \(\omega_\Gamma\in\Finf_0\).
Since
\(
  f_{\kappa_0}
  =
  f_{\omega_\Gamma}*f_{\omega_\Gamma}
\),
we have, for every \(j\ge0\),
\(
  (f_{\kappa_0})^{(j)}
  =
  f_{\omega_\Gamma}^{(j)}*f_{\omega_\Gamma}
\).
Thus
\(
  |(f_{\kappa_0})^{(j)}|
  \lesssim
  \mu_0*\mu_0
  \lesssim
  \mu_1
\),
and therefore \(\kappa_0\in\Finf_1\).

Suppose inductively that \(\kappa_r\in\Finf_{r+1}\).
By Theorem~\ref{thm:pole-invariance},
\(
  \omega_b\star\kappa_r\in\Finf_{r+2}
\).
Since \(\mu_{r+1}\le\mu_{r+2}\), we also have
\(\kappa_r\in\Finf_{r+2}\). Hence
\[
  \kappa_{r+1}
  =
  \omega_b\star\kappa_r+\frac12\kappa_r
  \in\Finf_{r+2}.
\]
Consequently,
\[
  \kappa_r\in\Finf_{r+1},
  \qquad r\ge0.
\]
Since \(\Finf_m\subset\FBV_m\), every generator of \(D\) belongs to some weighted \(BV\) class.  Proposition~\ref{prop:closure}
gives
\(
  \FBV_m\star\FBV_n
  \subset
  \FBV_{m+n+1}
\).
Hence every finite \(\star\)-product of generators belongs to
\(\FBV_M\) for some \(M\ge0\).

The pointwise variation measure is subadditive, so finite linear combinations remain in a weighted $BV$ class.  Ordinary $\star$-products are compatible with $\vee$ by Proposition \ref{prop:dictionary}, and $\omega_C^\vee$ is already a generator while $\omega_\Gamma^\vee=\omega_\Gamma$.

It remains to check the principal-value orbit pointwise, because the Fourier multiplier identity alone would identify only almost-everywhere classes.  Put $p=f_{\omega_b}$ and $g=f_\kappa$ for $\kappa\in\Finf_m$.  The pole profile is odd and purely imaginary, hence
$\overline{p(-s)}=p(s)$.  For the symmetric truncations
\[
 H_\varepsilon(t)=\int_{|s|>\varepsilon}p(s)g(t-s)\,ds
\]
one has, after $s\mapsto-s$,
\[
 \overline{H_\varepsilon(-t)}
 =\int_{|s|>\varepsilon}p(s)\,\overline{g(-t+s)}\,ds.
\]
Letting $\varepsilon\downarrow0$ and using Theorem \ref{thm:pole-invariance} for both $\kappa$ and $\kappa^\vee$ gives the exact identity
\begin{equation}\label{eq:pvvee}
 (\omega_b\star\kappa)^\vee=\omega_b\star\kappa^\vee.
\end{equation}
Now $\kappa_0=\omega_\Gamma^{\star2}$ is $\vee$-fixed, and the recursion
$\kappa_{r+1}=\omega_b\star\kappa_r+\frac12\kappa_r$ together with \eqref{eq:pvvee} shows inductively that every $\kappa_r$ is $\vee$-fixed.  Hence the generating set, and therefore $D$, is $\vee$-invariant.  
Finally, since \(D\subset\A\), Lemma~\ref{lem:regularity} shows
that every \(\kappa\in D\) belongs to \(\Wiener\) and satisfies
\textbf{(R)}.  Moreover, for every ordered pair
\(\kappa,\eta\in D\), Theorem~\ref{thm:HSproduct} gives
\[
  W(\kappa)W(\eta)-W(\kappa\star\eta)\in\mathcal S_2.
\]\end{proof}

The orbit has been chosen so that its symbols are exactly the polynomial functions of the logistic coordinate that vanish at the two endpoints.

\begin{proposition}[Orbit symbols and density]\label{prop:orbitdensity}
For $r\ge0$,
\begin{equation}\label{eq:orbitsymbol}
  \sigma_{\kappa_r}(\xi)
  =4\pi^2\thetaM(\xi)^{r+1}\bigl(1-\thetaM(\xi)\bigr).
\end{equation}
Consequently
\begin{equation}\label{eq:Idef}
 I=\{p\circ\thetaM:p\in\mathbb C[v],\ p(0)=p(1)=0\}
 \subset \sigma(D),
\end{equation}
and $I$ is dense in $C_0(\mathbb R)$.  In particular,
\begin{equation}\label{eq:sigmaD}
  \overline{\sigma(D)}^{\|\cdot\|_\infty}=C_0(\mathbb R).
\end{equation}
\end{proposition}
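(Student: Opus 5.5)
The plan is to prove \eqref{eq:orbitsymbol} by induction on $r$, then read off \eqref{eq:Idef} by linearity, and finally get density from Stone--Weierstrass.

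\emph{Base case $r=0$.} Since $\kappa_0=\omega_\Gamma^{\star2}$ and $\omega_\Gamma\in\Wiener$, Proposition~\ref{prop:dictionary} gives $\sigma_{\kappa_0}=\sigma_{\omega_\Gamma}^2$. By \eqref{eq:Gamma2symbol} this equals $\pi^2\operatorname{sech}^2(\pi\xi)=4\pi^2\thetaM(1-\thetaM)$.

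\emph{Inductive step.} Proposition~\ref{prop:Dclosure} gives $\kappa_r\in\Finf_{r+1}$. The second identity in \eqref{eq:polemult} then yields $\sigma_{\kappa_{r+1}}=\thetaM\,\sigma_{\kappa_r}$, and the exponent increases by one.

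\emph{The inclusion \eqref{eq:Idef}.} Let $p(0)=p(1)=0$. Write $p(v)=v(v-1)q(v)$ with $q(v)=\sum_{r=0}^{d}c_rv^r$. Then
\[
p(v)=-\sum_{r=0}^{d} c_r\, v^{r+1}(1-v).
\]
Since the symbol map is linear on $D\subset\Wiener$, this gives
\[
p\circ\thetaM=\sigma_\kappa,\qquad \kappa=-\frac1{4\pi^2}\sum_{r=0}^{d} c_r\kappa_r\in D.
\]

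\emph{Density.} Everything above is a direct computation; the only step needing care is this one. The set $I$ is a complex $*$-subalgebra of $C_0(\mathbb R)$. It is closed under products, since $pq$ still vanishes at $0$ and $1$. It is closed under conjugation, since $\thetaM$ is real-valued and $\bar p$ also vanishes at $0$ and $1$. Its elements vanish at infinity because $\thetaM(\xi)\to1$ as $\xi\to-\infty$ and $\thetaM(\xi)\to0$ as $\xi\to+\infty$.

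It separates points: $\thetaM$ is strictly decreasing, so $v(1-v)$ and $v^2(1-v)$ composed with $\thetaM$ separate $\xi_1\neq\xi_2$. Indeed, if both agreed at $\xi_1,\xi_2$, their ratio $\thetaM$ would agree as well, because $\thetaM\in(0,1)$ keeps $v(1-v)$ nonzero. The same element $\thetaM(1-\thetaM)$ is nowhere zero, so $I$ vanishes at no point.

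The locally compact Stone--Weierstrass theorem therefore gives $\overline I=C_0(\mathbb R)$. Finally, \eqref{eq:sigmaD} follows from $I\subset\sigma(D)\subset\sigma(\A)\subset C_0(\mathbb R)$, using Proposition~\ref{prop:Dclosure} and the Riemann--Lebesgue inclusion from Lemma~\ref{lem:density}.
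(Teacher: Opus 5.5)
Your proof is correct and follows the paper's argument: the base case from \eqref{eq:Gamma2symbol}, the induction via \eqref{eq:polemult} using $\kappa_r\in\Finf_{r+1}$, the same explicit kernel $-\frac{1}{4\pi^2}\sum_r c_r\kappa_r$, and the locally compact Stone--Weierstrass theorem. The only cosmetic difference is that you check point separation directly on $\mathbb R$, through the ratio of $\thetaM^2(1-\thetaM)$ and $\thetaM(1-\thetaM)$, whereas the paper transports the problem to $C_0((0,1))$ via the homeomorphism $\thetaM$ and applies Stone--Weierstrass there to polynomials vanishing at $0$ and $1$.
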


\begin{proof}
By \eqref{eq:Gamma2symbol},
\[
 \sigma_{\kappa_0}=\pi^2\operatorname{sech}^2(\pi\xi)
 =4\pi^2\thetaM(1-\thetaM).
\]
The recursion and \eqref{eq:polemult} give $\sigma_{\kappa_{r+1}}=\thetaM\sigma_{\kappa_r}$, proving \eqref{eq:orbitsymbol}.  If $p(0)=p(1)=0$, write $p(v)=v(v-1)q(v)$ with $q(v)=\sum_{r=0}^Nq_rv^r$.  Then
\[
  \kappa_p=-\frac1{4\pi^2}\sum_{r=0}^Nq_r\kappa_r\in D,
  \qquad \sigma_{\kappa_p}=p\circ\thetaM,
\]
which proves \eqref{eq:Idef}.

Since $\thetaM$ is a homeomorphism from $\mathbb R$ onto $(0,1)$, composition with $\thetaM$ identifies $C_0((0,1))$ isometrically with $C_0(\mathbb R)$.  Polynomials vanishing at $0$ and $1$ form a self-adjoint subalgebra of $C_0((0,1))$ which separates points and vanishes nowhere; the locally compact Stone--Weierstrass theorem gives density.
\end{proof}
\begin{remark}
    The endpoint conditions are natural here.  Indeed,
\[
  \thetaM(\xi)\to0\quad(\xi\to+\infty),
  \qquad
  \thetaM(\xi)\to1\quad(\xi\to-\infty),
\]
so \(p\circ\thetaM\) belongs to \(C_0(\mathbb R)\) precisely when
\(p(0)=p(1)=0\).  Moreover, these are exactly the polynomial
functions generated by the orbit symbols, since
\[
  \sigma_{\kappa_r}
  =
  4\pi^2\thetaM^{r+1}(1-\thetaM).
\]
\end{remark}

\section{Principal-value intertwining}\label{sec:intertwining}

Proposition~\ref{prop:orbitdensity} identifies the symbols reachable
by the orbit $\mathcal D$, but says nothing yet about $T_b$ itself: to
use Theorem~\ref{thm:L3} 
we must identify the action of
\(\pi(T_b)\) on the Mellin fibre, by which we mean the Calkin
subalgebra
\(
  \Lambda(C_0(\mathbb R))
\).
More precisely, we shall show that left multiplication by
\(\pi(T_b)\) corresponds, under the Mellin symbol calculus, to
multiplication by \(\widetilde\theta\). The difficulty is that $\omega_b$ has a
pole at $1$, so Theorem~\ref{thm:HSproduct} does not apply directly to
the pair $(\omega_b,\kappa)$; this section supplies the quantitative
replacement.

Fix $j,l\ge0$ and put $a=j+1$, $e=l+1$, $\beta=e/a$. For a sufficiently
regular kernel $\kappa$, define
\begin{equation}\label{eq:phiGQ}
  \phi(t) = \frac{a}{2\pi i}\, \frac{\kappa(e/t)}{t}, \qquad
  G(t) = \frac{\phi(t)}{a-t}, \qquad
  Q(t) = \frac{\phi(t)-\phi(a)}{a-t}.
\end{equation}
Then $G = Q + \phi(a)/(a-t)$: the singularity has been split into a
regular divided difference and a universal pole. With $H_n$ we denote the $n-$th Harmonic number and with $\gamma$ the Euler constant.

\begin{proposition}[Principal-value pairing]\label{prop:pvpairing}
Assume that $W(\kappa)$ is bounded, that $G$ is integrable away from
$a$, and that $Q$ is locally integrable near $a$. With
\[
  \Delta_Q(K) = \sum_{\substack{1\le c\le K \\ c\ne a}} Q(c) - \int_0^K Q(t)\,dt,
\]
interpreted with the common cutoff $K$, one has
\begin{equation}\label{eq:pvpairing}
  \bigl[ W(\omega_b)W(\kappa) - W(\omega_b\star\kappa) \bigr]_{jl}
  = \frac1a \Bigl[ \lim_{K\to\infty} \Delta_Q(K) + \phi(a)\bigl(\psi(a)-\log a\bigr) \Bigr].
\end{equation}
\end{proposition}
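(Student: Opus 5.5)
The plan is to compute both matrix entries directly as a lattice sum and a principal-value integral of the same function $G$, exactly as in the proof of Theorem~\ref{thm:lattice}. Then I split $G=Q+\phi(a)/(a-t)$ and evaluate the universal pole contribution in closed form via harmonic numbers.

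\emph{Step 1 (the product entry).} Since $W(\kappa)$ is bounded and $W(\omega_b)=T_b-\tfrac12 I$ is bounded by \eqref{eq:Tb-def}, the $j$-th row of $W(\omega_b)$ and the $l$-th column of $W(\kappa)$ lie in $\ell^2$. Hence the entry of the product is an absolutely convergent series, equal to the limit of its partial sums up to $c=K$. Writing $c=k+1$ and using $\omega_b(c/a)=a/(2\pi i(a-c))$ for $c\ne a$, together with $\omega_b(1)=0$, gives
\[
  [W(\omega_b)W(\kappa)]_{jl}
  =\frac1a\sum_{c\ge1,\ c\ne a}\frac{a}{2\pi i(a-c)}\,\frac{\kappa(e/c)}{c}
  =\frac1a\lim_{K\to\infty}\sum_{\substack{1\le c\le K\\ c\ne a}}G(c).
\]

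\emph{Step 2 (the convolution entry).} In \eqref{eq:pvstar} with $w=\beta=e/a$, I substitute $t=au$, so that $du/u=dt/t$ and $\omega_b(t/a)=a/(2\pi i(a-t))$. This gives
\[
  \frac1a(\omega_b\star\kappa)(\beta)=\frac1a\,\PV\int_0^\infty G(t)\,dt.
\]
The excision $(ae^{-\eps},ae^{\eps})$ may be replaced by an additively symmetric one about $t=a$, by the same comparison as in the proof of Theorem~\ref{thm:pole-invariance}. The discrepancy for the pole term $\phi(a)/(a-t)$ is $O(\eps)$, while $Q$ is locally integrable near $a$. Since $G$ is integrable away from $a$, this principal value equals $\lim_{K\to\infty}\PV\int_0^K G$.

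\emph{Step 3 (splitting and the pole computation).} On $[0,K]$ I write $G=Q+\phi(a)/(a-t)$. The regular part contributes exactly $\Delta_Q(K)$; note that $\int_0^K Q$ needs no principal value, and that $c=a$ is excluded in both sums, consistently. For the pole part, with $K>a$,
\[
  \sum_{\substack{1\le c\le K\\ c\ne a}}\frac1{a-c}=H_{a-1}-H_{K-a},
  \qquad
  \PV\int_0^K\frac{dt}{a-t}=\log a-\log(K-a).
\]
Their difference is $H_{a-1}-\log a-\bigl(H_{K-a}-\log(K-a)\bigr)$, which tends to $H_{a-1}-\gamma-\log a=\psi(a)-\log a$.

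\emph{Step 4 (assembly).} Since the full difference of Steps 1 and 2 converges as $K\to\infty$, and the pole part converges by Step 3, the limit of $\Delta_Q(K)$ exists. Subtracting Step 2 from Step 1 with the common cutoff $K$ then yields \eqref{eq:pvpairing}.

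The main obstacle is the bookkeeping in Step 2. I must check that the logarithmically symmetric principal value of \eqref{eq:pvstar} coincides with the additively symmetric one at $t=a$, and that the truncations at $K$ can be taken simultaneously in the sum and the integral. Otherwise spurious constants such as $\log$-ratio terms could appear; I would handle this exactly as in the cutoff comparison already carried out in Theorem~\ref{thm:pole-invariance}.
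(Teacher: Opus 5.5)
Your proposal is correct and follows essentially the same route as the paper. Both arguments write the product entry as the absolutely convergent lattice sum $\frac1a\sum_{c\ne a}G(c)$ and the convolution entry as $\frac1a\PV\int_0^\infty G$, split $G=Q+\phi(a)/(a-t)$ with a common cutoff $K$, and evaluate the pole bracket as $H_{a-1}-H_{K-a}-\log a+\log(K-a)\to\psi(a)-\log a$. Your explicit check that the logarithmic excision in \eqref{eq:pvstar} differs from the additively symmetric one by exactly $\varepsilon$ on the pole term is a point the paper's proof of this proposition passes over, relying on the remark after \eqref{eq:pvstar}; it is a welcome addition rather than a different argument.
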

\begin{proof}
For \(c=k+1\), the definition of \(W(\omega_b)\), together with the convention \(\omega_b(1)=0\), gives
\[
  W(\omega_b)_{jk}
  =
  \begin{cases}
    \displaystyle \frac{1}{2\pi i(a-c)}, & c\neq a,\\[2mm]
    0, & c=a.
  \end{cases}
\]
Moreover,
\(
  W(\kappa)_{kl}
  =
  \frac1c\,\kappa\!\left(\frac ec\right)
\) and
\[
 [W(\omega_b)W(\kappa)]_{jl}
 =
 \sum_{c\ge1}'
 \frac{\kappa(e/c)}
      {2\pi i\,c(a-c)}=
 \frac1a\sum_{c\ge1}'G(c),
\]
where the prime indicates omission of the term \(c=a\).
Since \(W(\omega_b)=T_b-\frac12I\) and \(W(\kappa)\) are bounded,
the \(j\)-th row of \(W(\omega_b)\) and the \(l\)-th column of
\(W(\kappa)\) belong to \(\ell^2\).  Thus the last series converges
absolutely by Cauchy--Schwarz.

We next consider the convolution term.  By hypothesis, \(G\) is
integrable away from \(a\).  Near \(a\) we have
\(
  G(t)
  =
  Q(t)+\frac{\phi(a)}{a-t}
\).
The first term belongs to \(L^1_{\mathrm{loc}}\) near \(a\), while
the second has a symmetric principal value there.  Consequently
\(
  \PV\int_0^\infty G(t)\,dt
\)
exists.

Using the definition of the principal-value convolution and making
the change of variables \(t=au\), we obtain
\[
\begin{aligned}
 [W(\omega_b\star\kappa)]_{jl}
 &=
 \frac1a
 (\omega_b\star\kappa)\!\left(\frac ea\right)=
 \frac1a\PV\int_0^\infty
 \omega_b(u)
 \kappa\!\left(\frac{e}{au}\right)\frac{du}{u}\\
 &=
 \frac1a\PV\int_0^\infty
 \frac{a}{2\pi i}
 \frac{\kappa(e/t)}{t(a-t)}\,dt=
 \frac1a\PV\int_0^\infty G(t)\,dt.
\end{aligned}
\]

Now fix an integer \(K>a\).  Since
\(
  Q(t)=G(t)-\frac{\phi(a)}{a-t}
\),
the assumptions imply that \(Q\) is integrable on every finite
interval \((0,K)\).  Therefore
\(
  \Delta_Q(K)
  =
  \sum_{\substack{1\le c\le K\\c\ne a}}Q(c)
  -
  \int_0^KQ(t)\,dt
\)
is well defined.  Using
\(
  G(t)=Q(t)+\frac{\phi(a)}{a-t}
\)
with the same cutoff \(K\) in the sum and in the integral gives the
exact identity
\[
\begin{aligned}
 &\sum_{\substack{1\le c\le K\\c\ne a}}G(c)
 -
 \PV\int_0^KG(t)\,dt\\
 &\qquad=
 \Delta_Q(K)
 +
 \phi(a)
 \left[
   \sum_{\substack{1\le c\le K\\c\ne a}}
   \frac1{a-c}
   -
   \PV\int_0^K\frac{dt}{a-t}
 \right].
\end{aligned}
\]
The bracket can be evaluated explicitly.  First,
\[
 \sum_{\substack{1\le c\le K\\c\ne a}}
 \frac1{a-c}=
 \sum_{c=1}^{a-1}\frac1{a-c}
 +
 \sum_{c=a+1}^{K}\frac1{a-c}=
 H_{a-1}-H_{K-a}.
\]
On the other hand,
\[
 \PV\int_0^K\frac{dt}{a-t}=
 \lim_{\varepsilon\downarrow0}
 \left(
   \int_0^{a-\varepsilon}\frac{dt}{a-t}
   +
   \int_{a+\varepsilon}^{K}\frac{dt}{a-t}
 \right)=
 \log a-\log(K-a).
\]
Thus
\[
\sum_{\substack{1\le c\le K\\c\ne a}}
 \frac1{a-c}
 -
 \PV\int_0^K\frac{dt}{a-t}=
 H_{a-1}-H_{K-a}-\log a+\log(K-a).
\]
Since
\(
  H_N-\log N\longrightarrow\gamma
\)
and
\(
  \psi(a)=H_{a-1}-\gamma
\),
we obtain
\[
  H_{a-1}-H_{K-a}-\log a+\log(K-a)
  \longrightarrow
  \psi(a)-\log a.
\]

Finally, as \(K\to\infty\) through the integers, the truncated sum in
\(G\) converges to \(\sum_{c\ge1}'G(c)\), by absolute convergence,
and the truncated principal-value integral converges to
\(\PV\int_0^\infty G\), by the assumed integrability away from \(a\).
It follows at the same time that \(\Delta_Q(K)\) has a finite limit
and that
\[
  \bigl[
    W(\omega_b)W(\kappa)-W(\omega_b\star\kappa)
  \bigr]_{jl}
  =
  \frac1a
  \left[
    \lim_{K\to\infty}\Delta_Q(K)
    +
    \phi(a)\bigl(\psi(a)-\log a\bigr)
  \right].
\]

The common cutoff is essential: the sum and the integral involving
\(Q\) need not converge separately, whereas their common-cutoff
difference does.
\end{proof}

We now state the quantitative estimate used in the internal chain.
Let $(K_m)$ denote the condition
\begin{equation}\label{eq:Km}\tag{$K_m$}
  |\kappa(v)|+|v\kappa'(v)|+|v^2\kappa''(v)|
  \le
  K(1+|\log v|)^m\min(1,v^{-1}),
   v>0.
\end{equation}
{
\begin{theorem}[Intertwining defect]\label{thm:intertwining}
If $\kappa\in C^2(0,\infty)$ satisfies $(K_m)$, then $W(\kappa)$ is
bounded and
\begin{equation}\label{eq:intertwining}
  W(\omega_b)W(\kappa)-W(\omega_b\star\kappa)\in\mathcal S_2.
\end{equation}
More precisely, with $a=j+1$ and $e=l+1$,
\begin{equation}\label{eq:intertwining-entry}
  \left|
    [W(\omega_b)W(\kappa)-W(\omega_b\star\kappa)]_{jl}
  \right|
  \le
  C_mK
  \frac{(1+\log a+\log e)^{m+1}}{ae}.
\end{equation}
Every $\kappa\in\Finf_m$ satisfies $(K_m)$ by
Lemma~\ref{lem:euler}.
\end{theorem}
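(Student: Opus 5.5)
The plan is to use the principal-value pairing of Proposition~\ref{prop:pvpairing} and bound its two terms separately. Each bound should be of size $(1+\log a+\log e)^{m+1}/e$, so that after the prefactor $1/a$ we obtain~\eqref{eq:intertwining-entry}. Square-summability then follows exactly as at the end of the proof of Theorem~\ref{thm:HSproduct}, since $\sum_{a,e}(1+\log a+\log e)^{2m+2}/(ae)^2<\infty$.

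\emph{Boundedness and hypotheses.} Boundedness of $W(\kappa)$ is immediate: by Lemma~\ref{lem:euler} the condition $(K_m)$ gives $|f_\kappa|\le K\mu_m$ and $|f_\kappa'|\lesssim K\mu_m$. Hence $\kappa\in\FBV_m$, and Lemmas~\ref{lem:regularity} and~\ref{lem:schur} apply. The hypotheses of Proposition~\ref{prop:pvpairing} are then easy to check. With $\phi(t)=\frac{a}{2\pi i}\kappa(e/t)/t$, the bound $(K_m)$ becomes
\[
|\phi(t)|\lesssim Ka\,(1+|\log(e/t)|)^m\min(t^{-1},e^{-1}),
\]
together with $|t\phi'(t)|+|t^2\phi''(t)|$ bounded by the same quantity. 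So $G$ is integrable away from $a$ and $Q$ is $C^1$ near $a$.

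\emph{Universal pole term.} This term is handled by $|\psi(a)-\log a|\le 1/a$. It gives
\[
|\phi(a)|\,|\psi(a)-\log a|\lesssim K(1+|\log\beta|)^m\min(1,a/e),
\]
which is at most $K(1+\log a+\log e)^m/e$ when $a\le e$. When $a>e$ it is $O(K L^m)$, which is not small enough. The cure is to keep the exact bound: $|\phi(a)|\lesssim K a(1+|\log\beta|)^m\min(1/a,1/e)\le K L^m$, so this term contributes $K L^m/a$ after the prefactor. This is bad only when $e\ll a$.

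The natural fix is to estimate the adjoint instead when $e<a$, as in Theorem~\ref{thm:HSproduct}. By~\eqref{eq:pvvee}, $W(\omega_b)^*=-W(\omega_b)$ up to the identity, and $\omega_b^\vee=-\omega_b$. So I would first try to reduce the case $e<a$ to the symmetric configuration using $W(\kappa)W(\omega_b)$. If that reduction is awkward, I would instead exploit cancellation between the pole term and the lattice term. The more honest route is probably to combine them: write the bracket in~\eqref{eq:pvpairing} as $\lim_K\bigl[\sum' G-\PV\int G\bigr]$ and do a direct Euler--Maclaurin estimate on each piece.

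\emph{Divided-difference lattice term.} For $\lim_K\Delta_Q(K)$, split $(0,\infty)$ into three regions: $(0,1)$, the window $|t-a|\le a/2$, and the remainder.
\begin{enumerate}[label=(\roman*)]
\item On $(0,1)$ there is no lattice point. The integral $\int_0^1|Q|$ is controlled by the decay of $\kappa(e/t)$ for small $t$ (large argument), giving $O(KL^m/e)$, just as the tail term $I_a$ was handled before.
\item On the window, $Q$ is $C^1$. By Taylor's theorem, $|Q'|\lesssim\sup|\phi''|$, and Lemma~\ref{lem:euler-stieltjes} bounds the lattice error by $\int|Q'|\lesssim a\sup_{t\sim a}|\phi''(t)|\lesssim KL^m\min(a^{-1},e^{-1})\cdot a/a$. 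Up to the logarithm this is $O(K L^m/e)$ in both cases $a\lessgtr e$.
\item Away from $a$, write $Q=G-\phi(a)/(a-t)$. The $G$-part has variation $\int|G'|\lesssim\int|\phi'|/|a-t|+|\phi|/|a-t|^2$. This is estimated with the weights exactly as in the $Q_1,Q_2,Q_3$ splitting of Theorem~\ref{thm:HSproduct}, and is where the extra logarithm, giving the power $m+1$, appears (through $\int dt/t$ over $[1,a]$ or $[1,e]$). The $\phi(a)/(a-t)$-part has been removed exactly and needs no further estimate.
\end{enumerate}

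\emph{Main obstacle.} I expect the hardest step to be the regime $e\ll a$. There the individual pieces $\phi(a)$ and the window error are only $O(KL^m)$ rather than $O(KL^m a/e\cdot a^{-1})$, and one must show that they cancel, or else pass to adjoints. I would first try the adjoint reduction via $\omega_b^\vee=-\omega_b$ and $\kappa^\vee$ (which also satisfies $(K_m)$ by the symmetry of $w_m$ under $v\mapsto1/v$ combined with $\min(1,v^{-1})$, up to the factor $v$). That reduction turns $W(\omega_b)W(\kappa)$ into $W(\kappa^\vee)W(\omega_b)$, so it also requires a right-sided version of Proposition~\ref{prop:pvpairing}, obtained by the same computation with the roles of $a$ and $e$ exchanged.
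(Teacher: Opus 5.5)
Your skeleton matches the paper's. You invoke Proposition~\ref{prop:pvpairing}, bound the universal pole term using $|\psi(a)-\log a|\le 1/a$, and control $\lim_K\Delta_Q(K)$ through the variation of the divided difference $Q$. As written, however, the proposal does not close. It ends by declaring the regime $e\ll a$ an unresolved obstacle, and that obstacle comes from an arithmetic slip.

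Since $\phi(a)=\kappa(\beta)/(2\pi i)$, condition $(K_m)$ gives $|\phi(a)|\le KL^m\min(1,a/e)$, where $L=1+\log a+\log e$. You then dropped the factor $|\psi(a)-\log a|\le 1/a$. Keeping it, for all $a,e\ge1$,
\[
  \frac1a\,|\phi(a)|\,|\psi(a)-\log a|
  \le KL^m\,\frac{\min(1,a/e)}{a^2}
  = KL^m\min\Bigl(\frac1{a^2},\frac1{ae}\Bigr)
  \le \frac{KL^m}{ae}.
\]
Your own window bound in (ii), $KL^m\min(a^{-1},e^{-1})$, is likewise uniform. This contradicts your later claim that the window error is only $O(KL^m)$ when $e\ll a$. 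Both terms are in fact smaller in that regime, while the bracket only has to be $O(KL^{m+1}/e)$. So no case distinction is needed.

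The adjoint detour should be dropped, for two reasons. First, it rests on a false identity: $\omega_b^\vee=\omega_b$, not $-\omega_b$, because $T_b=W(\omega_b)+\tfrac12 I$ is self-adjoint for real $b$. Second, even with the right sign it turns the defect into $W(\kappa^\vee)W(\omega_b)-W(\omega_b\star\kappa^\vee)$. That is a right-sided defect, for which no pairing formula is available.

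The paper gets uniformity by rescaling $t=as$, so that $Q(as)=R_\beta(s)/(2\pi i a)$, with $R_\beta$ the divided difference at $s=1$ of $A_\beta(s)=\kappa(\beta/s)/s$. It proves
\[
  \Var_{[1/a,\infty)}R_\beta\lesssim \frac{KL^{m+1}}{\beta},\qquad
  |R_\beta(1)|+|\kappa(\beta)|\lesssim \frac{KL^m}{\beta},\qquad
  \int_0^{1/a}|R_\beta|\lesssim \frac{KL^m}{e}.
\]
The identity $a^{-2}\beta^{-1}=(ae)^{-1}$ then finishes every term at once.

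Three further points in your lattice estimate need repair, though none is serious.
\begin{enumerate}[label=(\alph*)]
\item In (iii) the subtracted pole is part of $Q$, so it is not ``removed''. The variation of $t\mapsto\phi(a)/(a-t)$ on $[1,a/2]$ and on $[3a/2,\infty)$ together is at most $4|\phi(a)|/a$. This is harmless, but it must be added to $\Var_{[1,\infty)}Q$.
\item The sum in $\Delta_Q$ omits $c=a$. Comparing with the full lattice sum therefore produces the extra term $-Q(a)=\phi'(a)$, with $|\phi'(a)|\lesssim KL^m\min(a^{-1},e^{-1})$.
\item As $t\to\infty$, $Q(t)\sim\phi(a)/(t-a)$, so $Q\notin L^1(1,\infty)$ unless $\kappa(\beta)=0$. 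Hence Lemma~\ref{lem:euler-stieltjes} cannot be applied on $[1,\infty)$ as stated. You need the common-cutoff form used in \eqref{eq:DeltaQ},
\[
  E_M(Q)=Q(M)+\sum_{c=1}^{M-1}\int_c^{c+1}\bigl(Q(c)-Q(t)\bigr)\,dt,
\]
with $Q(M)\to0$ and the series dominated by $\Var_{[1,\infty)}Q$.
\end{enumerate}
With these corrections, your three-region estimate yields $|D_{jl}|\lesssim KL^{m+1}/(ae)$, which is the paper's argument in unscaled variables.
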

\begin{proof}
Condition $(K_m)$ implies $\kappa\in\FBV_m$ with no exceptional
points. Indeed, with $v=e^t$,
\[
  |f_\kappa(t)|
  =
  e^{t/2}|\kappa(v)|
  \le
  K\mu_m(t),
\]
while
\[
\begin{aligned}
  |f_\kappa'(t)|
  &=
  e^{t/2}
  \left|
    \frac12\kappa(v)+v\kappa'(v)
  \right| \\
  &\le
  e^{t/2}\bigl(|\kappa(v)|+|v\kappa'(v)|\bigr)
  \le
  K\mu_m(t).
\end{aligned}
\]
Thus Lemma~\ref{lem:regularity} and Lemma~\ref{lem:schur}
give boundedness of $W(\kappa)$.
Write
\[
  D
  =
  W(\omega_b)W(\kappa)-W(\omega_b\star\kappa),
\]
where, until boundedness is obtained at the end of the proof, the
second term denotes the sampled matrix of the pointwise
principal-value convolution.
We first verify the remaining convergence hypotheses of
Proposition~\ref{prop:pvpairing}. For large $t$, condition $(K_m)$
gives
\[
  |G(t)|
  \le
  C_{a,e,m}K(1+\log t)^m t^{-2},
\]
so the tail integral converges absolutely. As $t\to 0$, the
factor $\min(1,t/e)$ in $(K_m)$ yields
\[
  |G(t)|
  \le
  C_{a,e,m}K
  (1+\log(e/t))^m,
\]
which is integrable at the origin. Finally,
$\kappa\in C^2(0,\infty)$ makes $\phi$ continuously differentiable
near $a$, and
\[
  Q(t)
  =
  -\int_0^1
  \phi'\bigl(a+s(t-a)\bigr)\,ds
\]
is bounded near $a$. Hence Proposition~\ref{prop:pvpairing} applies.
Set
\[
  \beta=\frac ea,
  \qquad
  A_\beta(s)=\frac{\kappa(\beta/s)}{s},
  \qquad
  R_\beta(s)
  =
  \frac{A_\beta(s)-A_\beta(1)}{1-s}.
\]
We give $R_\beta$ its continuous value at $s=1$,
\[
  R_\beta(1)
  =
  -A_\beta'(1)
  =
  \kappa(\beta)+\beta\kappa'(\beta).
\]
Correspondingly, we give $Q$ its continuous value at $t=a$,
\[
  Q(a)
  =
  -\phi'(a)
  =
  \frac{R_\beta(1)}{2\pi ia},
\]
so that $Q$ has no jump at $a$.
Direct scaling in \eqref{eq:phiGQ} gives
\begin{equation}\label{eq:Qas}
  Q(as)
  =
  \frac{R_\beta(s)}{2\pi ia},
  \qquad
  \int_0^1|Q(t)|\,dt
  =
  \frac1{2\pi}
  \int_0^{1/a}|R_\beta(s)|\,ds,
\end{equation}
and, since $Q$ is continuous at $a$,
\[
  \Var_{[1,\infty)}Q
  =
  \frac1{2\pi a}
  \Var_{[1/a,\infty)}R_\beta.
\]
Near $s=1$, the divided difference has the integral representation
\[
  R_\beta(s)
  =
  -\int_0^1
  A_\beta'\bigl(1+\tau(s-1)\bigr)\,d\tau,
\]
and
\[
  R_\beta'(s)
  =
  -\int_0^1
  \tau
  A_\beta''\bigl(1+\tau(s-1)\bigr)\,d\tau.
\]
Since, with $v=\beta/s$,
\[
  A_\beta'(s)
  =
  -s^{-2}\bigl(\kappa(v)+v\kappa'(v)\bigr),
\]
and
\[
  A_\beta''(s)
  =
  s^{-3}
  \bigl(
    2\kappa(v)+4v\kappa'(v)+v^2\kappa''(v)
  \bigr),
\]
condition $(K_m)$ gives
\begin{equation}\label{eq:Rbeta-middle}
  \sup_{[1/2,2]}|R_\beta|
  +
  \Var_{[1/2,2]}R_\beta
  \le
  C_mK
  \frac{(1+|\log\beta|)^m}{\beta}.
\end{equation}
We now estimate the head and the two tails. From $(K_m)$,
\begin{equation}\label{eq:Abeta-bounds}
\begin{aligned}
  |A_\beta(s)|
  &\le
  K(1+|\log(\beta/s)|)^m
  \min\left(\frac1s,\frac1\beta\right),\\
  |A_\beta'(s)|
  &\le
  2K(1+|\log(\beta/s)|)^m
  \min\left(\frac1{s^2},\frac1{s\beta}\right).
\end{aligned}
\end{equation}
Set
\[
  L=1+\log a+\log e.
\]
Assume first that $a\ge2$. On $0<s\le1/a$ one has
$s\le\beta$ and $|1-s|\ge1/2$, and therefore
\[
  |R_\beta(s)|
  \le
  \frac{2K}{\beta}
  \left[
    (1+\log(\beta/s))^m
    +(1+|\log\beta|)^m
  \right].
\]
With $s=x/a$ and $a\beta=e$, this yields
\begin{equation}\label{eq:Rbeta-tail}
  \frac1a
  \int_0^{1/a}|R_\beta(s)|\,ds
  \le
  C_mK\frac{L^m}{ae}.
\end{equation}
For $a=1$, split $(0,1)$ at $1/2$: the preceding argument applies
on $(0,1/2]$, while \eqref{eq:Rbeta-middle} controls $[1/2,1)$.
Thus \eqref{eq:Rbeta-tail} holds for every $a\ge1$.
We next estimate the variation. Away from $s=1$, the quotient rule
gives
\[
  R_\beta'(s)
  =
  \frac{
    A_\beta'(s)(1-s)+A_\beta(s)-A_\beta(1)
  }{(1-s)^2}.
\]
Hence
\[
  |R_\beta'(s)|
  \le
  2|A_\beta'(s)|
  +
  4\bigl(|A_\beta(s)|+|A_\beta(1)|\bigr),
  \qquad
  \frac1a\le s\le\frac12,
\]
and
\[
  |R_\beta'(s)|
  \le
  \frac{2|A_\beta'(s)|}{s}
  +
  \frac{
    4\bigl(|A_\beta(s)|+|A_\beta(1)|\bigr)
  }{s^2},
  \qquad s\ge2.
\]
The principal derivative term is estimated by splitting at $s=\beta$:
\[
\begin{aligned}
  \int_{1/a}^\infty|A_\beta'(s)|\,ds
  &\le
  \frac{2K}{\beta}
  \left[
    \int_0^{\log e}(1+r)^m\,dr
    +
    \int_1^\infty
    \frac{(1+\log x)^m}{x^2}\,dx
  \right] \\
  &\le
  C_mK
  \frac{(1+\log e)^{m+1}}{\beta}.
\end{aligned}
\]
For $a\ge2$, the remaining terms on the two exterior regions
$[1/a,1/2]$ and $[2,\infty)$ satisfy
\[
  \int_{1/a}^{1/2}|A_\beta(s)|\,ds
  +
  \int_2^\infty|A_\beta(s)|s^{-2}\,ds
  +
  |A_\beta(1)|
  \le
  C_mK\frac{L^m}{\beta},
\]
again by splitting at $s=\beta$ and using
\eqref{eq:Abeta-bounds}. When $a=1$, the first exterior interval is
empty, and \eqref{eq:Rbeta-middle} together with the estimate on
$[2,\infty)$ gives the same conclusion.
Combining these estimates with \eqref{eq:Rbeta-middle} gives
\begin{equation}\label{eq:Rbeta-var}
  \Var_{[1/a,\infty)}R_\beta
  \le
  C_mK\frac{L^{m+1}}{\beta}.
\end{equation}
Finally, $(K_m)$ and the definition of $R_\beta(1)$ give
\begin{equation}\label{eq:Rbeta-point}
  |R_\beta(1)|+|\kappa(\beta)|
  \le
  C_mK\frac{L^m}{\beta}.
\end{equation}
We now estimate the common-cutoff term in
Proposition~\ref{prop:pvpairing}. For integers $M>a$, let
\[
  E_M(Q)
  =
  \sum_{c=1}^M Q(c)
  -
  \int_1^M Q(t)\,dt.
\]
Since $\Delta_Q(M)$ omits the lattice point $c=a$, the exact
finite-cutoff identity is
\begin{equation}\label{eq:DeltaQ}
  \Delta_Q(M)
  =
  -\int_0^1Q(t)\,dt
  -
  Q(a)
  +
  E_M(Q).
\end{equation}
The same cutoff $M$ is used in both the sum and the integral; in
particular, neither term is separated before taking their difference.
On each unit interval,
\[
  E_M(Q)
  =
  Q(M)
  +
  \sum_{c=1}^{M-1}
  \int_c^{c+1}
  \bigl(Q(c)-Q(t)\bigr)\,dt.
\]
Moreover,
\[
  \sum_{c\ge1}
  \left|
    \int_c^{c+1}
    \bigl(Q(c)-Q(t)\bigr)\,dt
  \right|
  \le
  \sum_{c\ge1}\Var_{[c,c+1]}Q
  =
  \Var_{[1,\infty)}Q
  <
  \infty.
\]
Thus the series converges absolutely.
It remains only to verify the boundary term. By
\eqref{eq:Abeta-bounds},
\[
  |A_\beta(s)|
  \le
  K\frac{(1+\log(s/\beta))^m}{s}
  \longrightarrow0,
  \qquad s\to\infty.
\]
Hence
\[
  R_\beta(s)
  =
  \frac{A_\beta(s)-A_\beta(1)}{1-s}
  \longrightarrow0,
\]
and, by \eqref{eq:Qas},
\[
  Q(M)
  =
  \frac{1}{2\pi ia}
  R_\beta\!\left(\frac Ma\right)
  \longrightarrow0.
\]
Consequently, $E_M(Q)$ has a finite limit and
\[
  \left|
    \lim_{M\to\infty}E_M(Q)
  \right|
  \le
  \Var_{[1,\infty)}Q.
\]
Combining \eqref{eq:DeltaQ} with \eqref{eq:Qas}, and using
\[
  |\psi(a)-\log a|
  \le
  \frac Ca,
  \qquad
  |\phi(a)|
  =
  \frac{|\kappa(\beta)|}{2\pi},
\]
Proposition~\ref{prop:pvpairing} gives
\[
  |D_{jl}|
  \le
  C
  \left[
    \frac1a
    \int_0^{1/a}|R_\beta(s)|\,ds
    +
    \frac1{a^2}
    \left(
      |R_\beta(1)|
      +
      \Var_{[1/a,\infty)}R_\beta
      +
      |\kappa(\beta)|
    \right)
  \right].
\]
Substituting \eqref{eq:Rbeta-tail}, \eqref{eq:Rbeta-var},
\eqref{eq:Rbeta-point}, and
\[
  a^{-2}\beta^{-1}=(ae)^{-1},
\]
we obtain
\[
  |D_{jl}|
  \le
  C_mK
  \frac{L^{m+1}}{ae},
\]
which is \eqref{eq:intertwining-entry}. Its square is summable over
$a,e\ge1$, and therefore
\[
  D\in\mathcal S_2.
\]
Finally, since $W(\omega_b)W(\kappa)$ is bounded,
\[
  W(\omega_b\star\kappa)
  =
  W(\omega_b)W(\kappa)-D
\]
is bounded as well.
\end{proof}
}

\part{Proofs of the main theorems}
\section{Proof of the Barr\'{\i}a--Halmos theorem}\label{sec:barria-halmos}

We can now assemble Theorem~\ref{thm:L3},
Proposition~\ref{prop:orbitdensity}, and
Theorem~\ref{thm:intertwining} into an exact module relation in the
Calkin algebra, which is the last step needed to prove
Theorem~\ref{thm:cesaro-main}. The crucial point is that
$\widetilde\vartheta \notin C_0(\mathbb R)$, so one cannot directly
write $\Lambda(\widetilde\vartheta)$; instead, multiplication by
$\widetilde\vartheta$ acts as a bounded multiplier of $C_0(\mathbb
R)$.

\begin{lemma}[Module relation]\label{lem:module}
For every $g \in C_0(\mathbb R)$,
\begin{equation}\label{eq:module}
  \pi(T_b)\,\Lambda(g) = \Lambda(\widetilde\vartheta\, g).
\end{equation}
Consequently, for every integer $n\ge0$,
\begin{equation}\label{eq:module-power}
  \pi(T_b)^n\, \Lambda(g) = \Lambda(\widetilde\vartheta^n g).
\end{equation}
\end{lemma}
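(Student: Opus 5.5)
The plan is to verify \eqref{eq:module} first on a dense set of symbols coming from the orbit algebra $D$, and then pass to all of $C_0(\R)$ by continuity. The power formula \eqref{eq:module-power} then follows by induction on $n$: apply \eqref{eq:module} to $\widetilde\vartheta^{n-1}g$, which is again in $C_0(\R)$ because $\widetilde\vartheta$ is bounded and continuous.

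\textbf{Step 1 (smooth kernels).} Take $\kappa\in\Finf_m$. By \eqref{eq:Tb-def}, $T_b=W(\omega_b)+\tfrac12 I$. Theorem~\ref{thm:intertwining} gives
\[
W(\omega_b)W(\kappa)-W(\omega_b\star\kappa)\in\mathcal S_2 ,
\]
so
\[
\pi(T_b)\pi(W(\kappa))=\pi\bigl(W(\omega_b\star\kappa+\tfrac12\kappa)\bigr).
\]
By Theorem~\ref{thm:pole-invariance}, the kernel $\omega_b\star\kappa+\tfrac12\kappa$ lies in $\Finf_{m+1}\subset\A$, and its symbol is $\widetilde\vartheta\,\sigma_\kappa$. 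Applying the definition \eqref{eq:Lambda} of $\Lambda$ on both sides yields
\[
\pi(T_b)\Lambda(\sigma_\kappa)=\Lambda(\widetilde\vartheta\sigma_\kappa)\qquad\text{for }\kappa\in\Finf_m .
\]
In particular this holds for every orbit kernel $\kappa_r$, since $\kappa_r\in\Finf_{r+1}$ by Proposition~\ref{prop:Dclosure}. By linearity it therefore holds for every kernel $\kappa_p$ built in Proposition~\ref{prop:orbitdensity}, that is, for every symbol in the set $I=\{p\circ\widetilde\vartheta : p(0)=p(1)=0\}$.

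\textbf{Step 2 (density).} By Proposition~\ref{prop:orbitdensity}, $I$ is uniformly dense in $C_0(\R)$. Given $g\in C_0(\R)$, I choose $g_k\in I$ with $g_k\to g$ uniformly. Then $\widetilde\vartheta g_k\to\widetilde\vartheta g$ uniformly as well, because $\|\widetilde\vartheta\|_\infty\le1$. Since $\Lambda$ is contractive on $C_0(\R)$ (Proposition~\ref{prop:contractive} and \eqref{eq:Lambda}) and left multiplication by $\pi(T_b)$ is norm continuous, both sides of \eqref{eq:module} converge. This gives \eqref{eq:module} for all $g$.

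\textbf{Main obstacle.} The key point is that Theorem~\ref{thm:intertwining} is stated with $W(\omega_b\star\kappa)$ as a pointwise sampled matrix of the principal-value convolution. I must check that this is the same pointwise representative that Theorem~\ref{thm:pole-invariance} places in $\Finf_{m+1}$; otherwise $\Lambda$ would be applied to the class of a different matrix. This is exactly the identification $H_0=f_{\omega_b\star\kappa}$ for every $t$, and that identification is proved in Theorem~\ref{thm:pole-invariance}.

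Once this is settled, the symbol identity $\sigma_{\omega_b\star\kappa+\frac12\kappa}=\widetilde\vartheta\sigma_\kappa$ legitimately feeds into $\Lambda$, which is well defined on $\sigma(\A)$ by Proposition~\ref{prop:contractive}. The rest of the argument is bookkeeping.
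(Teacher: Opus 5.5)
Your proposal is correct and is essentially the paper's proof: $T_b=W(\omega_b)+\tfrac12 I$ together with Theorem~\ref{thm:intertwining} gives the relation on the dense set $I$ of orbit symbols; density of $I$, contractivity of $\Lambda$ and $\|\widetilde\vartheta\|_\infty\le 1$ extend it to $C_0(\mathbb R)$; and induction gives the powers. The only difference is cosmetic: you verify Step~1 for every $\kappa\in\Finf_m$ using Theorem~\ref{thm:pole-invariance}, whereas the paper works directly with $\kappa_g=\sum_r a_r\kappa_r$ and reads off $\omega_b\star\kappa_g+\tfrac12\kappa_g=\sum_r a_r\kappa_{r+1}$ from the orbit recursion.
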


\begin{proof}
We first prove~\eqref{eq:module} on the dense subalgebra 
$I =
\{p\circ\widetilde\vartheta : p\in\mathbb C[v],\ p(0)=p(1)=0\} \subset
C_0(\mathbb R)$ introduced in~\eqref{eq:Idef}. Let $g\in I$.
By Proposition~\ref{prop:orbitdensity}, there is a finite linear
combination of orbit kernels $\kappa_g = \sum_{r=0}^N a_r\kappa_r$ such
that $\sigma_{\kappa_g} = g$. Since each $\kappa_r \in F^\infty_{r+1}$,
the finite sum $\kappa_g$ belongs to $F^\infty_M$ for some $M$. Hence
Theorem~\ref{thm:intertwining} applies and gives $W(\omega_b)W(\kappa_g)
- W(\omega_b\star\kappa_g) \in \mathcal S_2$. Passing to the Calkin
algebra and using $T_b = W(\omega_b)+\tfrac12I$ give
\[
\begin{aligned}
      \pi(T_b)\Lambda(g) = \pi(T_b)\pi(W(\kappa_g))
  &= \pi\Bigl( \bigl( W(\omega_b)+\tfrac12I \bigr) W(\kappa_g) \Bigr)\\
  &= \pi\Bigl( W\Bigl( \omega_b\star\kappa_g + \tfrac12\kappa_g \Bigr) \Bigr).
\end{aligned}
\]
The last kernel can be identified directly from the orbit recursion:
since $\kappa_{r+1} = \omega_b\star\kappa_r + \tfrac12\kappa_r$,
$\omega_b\star\kappa_g + \tfrac12\kappa_g = \sum_{r=0}^N a_r\kappa_{r+1}$,
again a finite linear combination of orbit kernels. By~\eqref{eq:polemult},
$\sigma_{\omega_b\star\kappa_g+\frac12\kappa_g} = \widetilde\vartheta
\sigma_{\kappa_g} = \widetilde\vartheta g$. By the definition of
$\Lambda$, $\pi(T_b)\Lambda(g) = \Lambda(\widetilde\vartheta g)$ for
$g\in I$.

It remains to pass from $I$ to $C_0(\mathbb R)$. By
Proposition~\ref{prop:orbitdensity}, $I$ is uniformly dense
in $C_0(\mathbb R)$. Moreover, the maps $g \mapsto \pi(T_b)\Lambda(g)$
and $g\mapsto\Lambda(\widetilde\vartheta g)$ are continuous in the
uniform norm: the first because $\Lambda$ is bounded, the second
because $\|\widetilde\vartheta g\|_\infty \le \|\widetilde\vartheta\|_\infty
\|g\|_\infty \le \|g\|_\infty$. Thus~\eqref{eq:module} extends to every
$g\in C_0(\mathbb R)$. Finally, since $\widetilde\vartheta g \in
C_0(\mathbb R)$, we may apply~\eqref{eq:module} repeatedly; induction
gives~\eqref{eq:module-power}.
\end{proof}

\begin{theorem}[Internal Toeplitz realization]\label{thm:internalT}
The range of the Calkin Mellin calculus is contained in the Calkin
image of the Toeplitz algebra:
\begin{equation}\label{eq:internalT}
  \Lambda(C_0(\mathbb R))\subset \pi(\Toep).
\end{equation}
Consequently,
\[
  W(\kappa)\in\Toep,
  \qquad \kappa\in\A,
\]
and in particular \(C\in\Toep\).
\end{theorem}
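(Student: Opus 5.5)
The plan is to show that a dense subset of $\Lambda(C_0(\mathbb R))$ consists of Calkin classes of Toeplitz-algebra elements; since $\pi(\Toep)$ is norm closed in the Calkin algebra and $\Lambda$ is contractive, the inclusion \eqref{eq:internalT} then follows. The natural dense set is $I=\{p\circ\thetaM : p(0)=p(1)=0\}$ from Proposition~\ref{prop:orbitdensity}. Fix such a $p$ and write $p(v)=v(v-1)q(v)$ as in \eqref{eq:qdef}. By Theorem~\ref{thm:L3},
\[
  p(T_b)-T_{p\circ b}+\frac1{4\pi^2}\,q(T_b)\Gamma^2\in\HS .
\]
Here $p(T_b)$ and $T_{p\circ b}$ lie in $\Toep$, and $\Comp\subset\Toep$. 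Hence $\pi\bigl(q(T_b)\Gamma^2\bigr)\in\pi(\Toep)$.

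Next I identify this class through the Mellin calculus. By \eqref{eq:Gamma2symbol}, $\pi(\Gamma^2)=\pi(W(\kappa_0))=\Lambda(g_0)$, where
\[
  g_0=\sigma_{\kappa_0}=4\pi^2\thetaM(1-\thetaM)\in C_0(\mathbb R).
\]
The module relation of Lemma~\ref{lem:module}, in its polynomial form \eqref{eq:module-power}, then gives
\[
  \pi\bigl(q(T_b)\Gamma^2\bigr)=q(\pi(T_b))\Lambda(g_0)=\Lambda\bigl((q\circ\thetaM)g_0\bigr)
  =\Lambda\bigl(4\pi^2(q\circ\thetaM)\thetaM(1-\thetaM)\bigr)=-4\pi^2\Lambda(p\circ\thetaM).
\]
Therefore $\Lambda(p\circ\thetaM)\in\pi(\Toep)$ for every admissible $p$, so $\Lambda(I)\subset\pi(\Toep)$. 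Density of $I$ in $C_0(\mathbb R)$ and continuity of $\Lambda$ yield $\Lambda(C_0(\mathbb R))\subset\overline{\pi(\Toep)}$. The image of a $C^*$-algebra under a $*$-homomorphism is closed, so $\overline{\pi(\Toep)}=\pi(\Toep)$.

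For the consequence, take $\kappa\in\A$. Then $\pi(W(\kappa))=\Lambda(\sigma_\kappa)\in\pi(\Toep)$, so $W(\kappa)=T+K$ with $T\in\Toep$ and $K$ compact. Since $\Comp\subset\Toep$, this gives $W(\kappa)\in\Toep$. The case $\kappa=\omega_C$ gives $C\in\Toep$.

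Most of the analytic work has already been done: Theorem~\ref{thm:intertwining} feeds Lemma~\ref{lem:module}, and Theorem~\ref{thm:L3} supplies the Toeplitz side. The step needing the most care is matching the two polynomial identities exactly, that is, checking that the factor $q(T_b)$ in Theorem~\ref{thm:L3} acts on $\Lambda(g_0)$ as multiplication by $q\circ\thetaM$. This requires $g_0\in C_0(\mathbb R)$, which holds because it is the symbol of $\kappa_0\in\A$, so that \eqref{eq:module-power} applies term by term. One must also check that the constant $-1/(4\pi^2)$ combines with $\sigma_{\kappa_0}=4\pi^2\thetaM(1-\thetaM)$ to give exactly $-\Lambda(p\circ\thetaM)$; up to that sign the normalizations agree.
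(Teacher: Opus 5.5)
Your proposal is correct and follows the paper's proof essentially line by line. Both arguments write $p=v(v-1)q$ and identify $\pi(\Gamma^2)=\Lambda(\sigma_{\omega_\Gamma^{\star2}})$ via \eqref{eq:Gamma2symbol}. Both then combine \eqref{eq:module-power} with Theorem~\ref{thm:L3} to get $\pi(p(T_b)-T_{p\circ b})=\Lambda(p\circ\thetaM)$; your identity $-\frac{1}{4\pi^2}\pi(q(T_b)\Gamma^2)=\Lambda(p\circ\thetaM)$ is the same statement. Both conclude by density of $I$, continuity of $\Lambda$, closedness of $\pi(\Toep)$, and $\Comp\subset\Toep$. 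The sign bookkeeping you flag is harmless, since only membership in the linear space $\pi(\Toep)$ is at stake.
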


\begin{proof}
Let \(p\in\mathbb C[v]\) satisfy
\(
  p(0)=p(1)=0
\),
and write
\(
  p(v)=v(v-1)q(v)\), \(
   q\in\mathbb C[v].
\)
Set
\(
  g=\sigma_{\omega_\Gamma^{\star2}}
   =\pi^2\operatorname{sech}^2(\pi\xi)
\).
By \eqref{eq:Gamma2symbol} and the definition of the Calkin Mellin
calculus in Theorem~\ref{thm:calkin-main},
\[
  \pi(\Gamma^2)
  =
  \pi\bigl(W(\omega_\Gamma^{\star2})\bigr)
  =
  \Lambda(g).
\]
Write
\(
  q(v)=\sum_{r=0}^N q_rv^r
\).
Using \eqref{eq:module-power} and linearity of \(\Lambda\), we obtain
\[
\begin{aligned}
  \pi(q(T_b))\pi(\Gamma^2)
  &=
  \sum_{r=0}^N q_r\,\pi(T_b^r)\Lambda(g)=
  \sum_{r=0}^N q_r\,\Lambda(\thetaM^r g)=
  \Lambda\bigl(q(\thetaM)g\bigr).
\end{aligned}
\]
Now Theorem~\ref{thm:L3} gives
\[
  p(T_b)-T_{p\circ b}
  =
  -\frac1{4\pi^2}q(T_b)\Gamma^2+K_p,
  \qquad K_p\in\mathcal S_2.
\]
Passing to the Calkin algebra therefore yields
\[
\begin{aligned}
  \pi\bigl(p(T_b)-T_{p\circ b}\bigr)
  &=
  -\frac1{4\pi^2}
  \Lambda\bigl(q(\thetaM)g\bigr)=
  \Lambda\bigl(p\circ\thetaM\bigr),
\end{aligned}
\]
because, by \eqref{eq:Gamma2symbol},
\(
  {g}/{4\pi^2}
  =
  \thetaM(1-\thetaM)
\),
and hence
\[
  -\frac1{4\pi^2}q(\thetaM)g
  =
  -q(\thetaM)\thetaM(1-\thetaM)
  =
  \thetaM(\thetaM-1)q(\thetaM)
  =
  p(\thetaM).
\]

Since \(T_b\in\Toep\), we have \(p(T_b)\in\Toep\). Moreover
\(T_{p\circ b}\in\Toep\).  Thus
\[
  \Lambda(p\circ\thetaM)\in\pi(\Toep).
\]
Consequently,
\[
  \Lambda(I)\subset\pi(\Toep),
\]
where \(I\) is the dense subalgebra defined in
Proposition~\ref{prop:orbitdensity}.  Since \(I\) is dense in
\(C_0(\mathbb R)\), \(\Lambda\) is continuous, and
\(\pi(\Toep)\) is a closed \(C^*\)-subalgebra of the Calkin algebra,
we obtain
\[
  \Lambda(C_0(\mathbb R))\subset\pi(\Toep).
\]

Finally, let \(\kappa\in\A\).  By Theorem~\ref{thm:calkin-main},
\[
  \Lambda(\sigma_\kappa)=\pi(W(\kappa)).
\]
By the inclusion just proved there exists \(A\in\Toep\) such that
\[
  \pi(A)=\Lambda(\sigma_\kappa)=\pi(W(\kappa)).
\]
Hence
\[
  W(\kappa)-A\in\Comp.
\]
Since \(\Comp\subset\Toep\), it follows that
\[
  W(\kappa)\in\Toep.
\]
Taking \(\kappa=\omega_C\), for which \(W(\omega_C)=C\), gives
\(C\in\Toep\).
\end{proof}

The preceding theorem already resolves the original membership
question. The following elementary observation upgrades it to the
commutator ideal and proves Theorem~\ref{thm:cesaro-main} in the form
stated in the introduction.

\begin{corollary}[Barr\'{\i}a--Halmos theorem]\label{cor:barriahalmos}
The classical Ces\`aro operator belongs to the commutator ideal
$\Comm$ of the Toeplitz algebra.
\end{corollary}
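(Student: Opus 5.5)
We already know $C\in\Toep$ from Theorem~\ref{thm:internalT}. The plan is to use the symbol homomorphism. We use the classical fact (see \cite{F}) that the commutator ideal $\Comm$ is the kernel of the symbol map $\Sigma:\Toep\to L^\infty(\mathbb T)$. This map is the unique $*$-homomorphism with $\Sigma(T_\varphi)=\varphi$, and it vanishes on compacts. Equivalently, for $T\in\Toep$ the decomposition $T=T_\varphi+Q$ with $Q\in\Comm$ has $\varphi=\Sigma(T)$. Hence it suffices to show $\Sigma(C)=0$.

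The key observation is that $\Sigma$ factors through the Calkin algebra, since $\Comp\subset\Comm=\ker\Sigma$. So there is a $*$-homomorphism $\widetilde\Sigma:\pi(\Toep)\to L^\infty(\mathbb T)$ with $\widetilde\Sigma\circ\pi=\Sigma$. By Theorem~\ref{thm:internalT}, $\Lambda(C_0(\mathbb R))\subset\pi(\Toep)$, so $\widetilde\Sigma\circ\Lambda$ is a $*$-homomorphism on $C_0(\mathbb R)$. We show it vanishes on the dense subalgebra $I$ of Proposition~\ref{prop:orbitdensity}.

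For $p\in\mathbb C[v]$ with $p(0)=p(1)=0$, the proof of Theorem~\ref{thm:internalT} gives
\[
\Lambda(p\circ\thetaM)=\pi\bigl(p(T_b)-T_{p\circ b}\bigr).
\]
Applying $\widetilde\Sigma$ and using multiplicativity gives
\[
\widetilde\Sigma\bigl(\Lambda(p\circ\thetaM)\bigr)=p(\Sigma(T_b))-p\circ b=p\circ b-p\circ b=0.
\]
By density of $I$ and continuity of $\widetilde\Sigma\circ\Lambda$ (a $*$-homomorphism between $C^*$-algebras is contractive), $\widetilde\Sigma\circ\Lambda\equiv0$ on $C_0(\mathbb R)$.

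Finally, $\pi(C)=\Lambda(\sigma_{\omega_C})$ by Theorem~\ref{thm:calkin-main}, so $\Sigma(C)=\widetilde\Sigma(\pi(C))=0$ and $C\in\Comm$. The same argument gives $W(\kappa)\in\Comm$ for every $\kappa\in\A$.

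The only delicate point is the identification $\Comm=\ker\Sigma$, together with its compatibility with the Calkin quotient. This is standard, and I would cite it rather than reprove it. If one prefers to avoid it, an alternative is to note that $\pi(C)$ lies in the closed ideal generated by the elements $\pi(p(T_b)-T_{p\circ b})$. These are images of sums of semicommutators, which lie in $\Comm$, because $\Comm$ is also the closed ideal generated by semicommutators $T_fT_g-T_{fg}$.
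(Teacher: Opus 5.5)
Your proof is correct, but it reaches $\varsigma(C)=0$ (your $\Sigma(C)=0$) by a different mechanism from the paper's.

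\textbf{The paper's route.} Once $C\in\Toep$ is known, the paper never uses $\Lambda$ at this step. It relies on the exact identity $CT_{1-z}=\operatorname{diag}(1/(n+1))_{n\ge0}$, which holds because $CT_{1-z}e_k=Ce_k-Ce_{k+1}=e_k/(k+1)$. The right-hand side is compact, so applying the symbol map gives $\varsigma(C)(1-z)=0$ a.e., hence $\varsigma(C)=0$. This is a one-line observation specific to $C$, essentially the Barr\'{\i}a--Halmos remark. It uses nothing from the Mellin machinery beyond $C\in\Toep$.

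The extension to all $W(\kappa)$ is then done separately in Theorem~\ref{thm:full-calculus}. There the paper uses $\pi(W(\kappa))=F_\kappa(\pi(C))$ from Theorem~\ref{thm:calkin-main}, together with the fact that $*$-homomorphisms commute with continuous functional calculus. That step in turn rests on the identification of the essential spectrum of $C$.

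\textbf{Your route.} You exploit the internal structure of the proof of Theorem~\ref{thm:internalT}. The elements $\Lambda(p\circ\thetaM)=\pi\bigl(p(T_b)-T_{p\circ b}\bigr)$ span the image of a dense subalgebra, and they are visibly annihilated by the symbol map. Continuity then gives $\Lambda(C_0(\mathbb R))\subset\pi(\Comm)$ all at once.

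\textbf{What each buys.} Your argument gives $W(\kappa)\in\Comm$ for every $\kappa\in\A$ in the same stroke. It also bypasses the functional-calculus step, and with it the dependence on $\pi(W(\kappa))=F_\kappa(\pi(C))$, for this conclusion. The cost is that you lean on the identity derived inside the proof of Theorem~\ref{thm:internalT} and on the density of $I$. The paper's argument for $C$ alone is shorter, elementary, and self-contained.

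Your closing alternative via semicommutators is also fine but unnecessary.
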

\begin{proof}
By Theorem~\ref{thm:internalT}, \(C\in\Toep\). The exact matrix identity
\begin{equation}\label{eq:CT1-z}
  CT_{1-z}
  =
  \operatorname{diag}\left(\frac{1}{n+1}\right)_{n\ge0}
\end{equation}
is compact.
Let
\[
  \varsigma:\Toep\longrightarrow L^\infty(\mathbb T)
\]
be the Toeplitz symbol homomorphism. Its kernel is the commutator ideal
\(\Comm\), and
\(
  \Comp\subset\Comm=\ker\varsigma
\),
see~\cite{Douglas}. Hence the compact operator on the right-hand side
of \eqref{eq:CT1-z} is annihilated by \(\varsigma\). Applying
\(\varsigma\) to \eqref{eq:CT1-z} therefore gives
\[
  \varsigma(C)(1-z)=0
  \qquad\text{a.e. on }\mathbb T.
\]
Since \(1-z\neq0\) almost everywhere on \(\mathbb T\), it follows that
\(
  \varsigma(C)=0
\).
Thus
\(
  C\in\ker\varsigma=\Comm
\).
\end{proof}

The fact that if $C \in \mathcal{T}$ then $C \in \Comm$, was already explained in \cite{BH} and discussed in \cite{BS}.

At this point, the independence from Sang's result becomes
formal: the proof of Corollary~\ref{cor:barriahalmos} uses only the
original results of the present paper.

\section{The full Mellin--Toeplitz calculus}\label{sec:full-calculus}

Having proved the Barr\'{\i}a--Halmos theorem, we now collect the
broader consequences of the machinery built for its proof, which
together give the complete statement of Theorem~\ref{thm:intro-main}
announced in the introduction. Let $\varsigma : \Toep \to
L^\infty(\mathbb T)$ again denote the Toeplitz symbol homomorphism,
with kernel $\Comm$ and $\Comp \subset \Comm$~\cite{Douglas}.

\begin{theorem}\label{thm:full-calculus}
For the class $\Kernels = \bigcup_{m\ge0} \FBV_m$ the conclusions of
Theorem~\ref{thm:intro-main} hold. More explicitly:
\begin{enumerate}[label=(\roman*)]
  \item $\Kernels$ is a $\vee$-closed commutative complex
    $\star$-algebra and every $W(\kappa)$ is bounded;
  \item for $\kappa,\eta \in \Kernels$, $W(\kappa)W(\eta) -
    W(\kappa\star\eta) \in \mathcal S_2$;
  \item $\Lambda$ is the isometric $*$-isomorphism of
    Theorem~\ref{thm:calkin-main};
  \item every $W(\kappa)$ belongs to $\Comm \subset \Toep$;
  \item $\|W(\kappa)\|_{\mathrm{ess}} = \|\sigma_\kappa\|_\infty$,
    $\spec_{\mathrm{ess}}(W(\kappa)) = \overline{\sigma_\kappa(\mathbb R)}$;
  \item $[W(\kappa),W(\eta)]$ and $[W(\kappa)^*,W(\kappa)]$ are
    Hilbert--Schmidt.
\end{enumerate}
\end{theorem}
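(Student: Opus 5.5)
This theorem is essentially an assembly of results already proved, so the plan is to verify each item by pointing to the relevant earlier statement. The only genuinely new step is (iv), the membership of every $W(\kappa)$ in $\Comm$.

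Items (i), (ii), (iii), (v), (vi) follow directly from earlier results.
\begin{enumerate}[label=(\roman*)]
\item The algebra and involution properties are Proposition~\ref{prop:closure}. Boundedness follows by combining Lemma~\ref{lem:regularity}, which gives $\kappa\in\Wiener$ together with condition \textbf{(R)}, with the Calkin--Schur estimate Lemma~\ref{lem:schur}.
\item This is exactly Theorem~\ref{thm:HSproduct}.
\item This is Theorem~\ref{thm:calkin-main}. That theorem rests on Proposition~\ref{prop:contractive} and Lemma~\ref{lem:density}, and additionally on the known essential spectrum $\spec_{\mathrm{ess}}(C)=\SigmaC$.
\item[(v)] This is \eqref{eq:essresults}.
\item[(vi)] This is Corollary~\ref{cor:commutators}.
\end{enumerate}

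For (iv), Theorem~\ref{thm:internalT} already gives $W(\kappa)\in\Toep$ for every $\kappa\in\Kernels$. It remains to show $\varsigma(W(\kappa))=0$. I would imitate Corollary~\ref{cor:barriahalmos}, but argue through the Calkin calculus rather than through an exact matrix identity. By Theorem~\ref{thm:calkin-main}, $\pi(W(\kappa))=F_\kappa(\pi(C))$ with $F_\kappa\in C(\SigmaC)$ and $F_\kappa(0)=0$. Choose polynomials $P_n(w,\bar w)$ with no constant term that converge uniformly to $F_\kappa$ on $\SigmaC$; this is possible by Stone--Weierstrass applied to the functions on $\SigmaC$ vanishing at $0$. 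Then $W(\kappa)-P_n(C,C^*)$ is, modulo compacts, small in norm. Since $\varsigma$ is a contractive $*$-homomorphism on $\Toep$ that kills $\Comp$, it factors through $\pi(\Toep)$, and we get
\[
\|\varsigma(W(\kappa))-P_n(\varsigma(C),\overline{\varsigma(C)})\|_\infty\le\|F_\kappa-P_n\|_{C(\SigmaC)}.
\]
Corollary~\ref{cor:barriahalmos} gives $\varsigma(C)=0$, and $P_n$ has no constant term, so $P_n(\varsigma(C),\overline{\varsigma(C)})=0$. Letting $n\to\infty$ yields $\varsigma(W(\kappa))=0$, hence $W(\kappa)\in\ker\varsigma=\Comm$.

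The main point to check is that $\varsigma$ factors through the Calkin image with norm control, so that the $*$-homomorphism $\pi(\Toep)\to L^\infty(\mathbb T)$ commutes with the continuous functional calculus. This is standard: $\Comp\subset\ker\varsigma$, and $*$-homomorphisms between $C^*$-algebras are contractive. An alternative argument is that $\Comm$ is a closed ideal containing $\Comp$ and $C$, and it contains the polynomials $P_n(C,C^*)$ because each has no constant term. Hence it contains their norm limit modulo $\Comp$, which is $W(\kappa)$.
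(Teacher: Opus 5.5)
Your proposal is correct and follows the paper's proof. You assemble items (i)--(iii), (v) and (vi) from the same earlier results. For (iv) you use the same three facts: $\varsigma(C)=0$, the factorization of $\varsigma$ through $\pi(\Toep)$, and $\pi(W(\kappa))=F_\kappa(\pi(C))$ with $F_\kappa(0)=0$. Your polynomial approximation, and equally your closed-ideal variant, simply spell out the paper's one-line remark that a $*$-homomorphism commutes with the continuous functional calculus.
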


\begin{proof}
The algebra, involution, and boundedness assertions are
Proposition~\ref{prop:closure}, Lemma~\ref{lem:regularity}, and
Lemma~\ref{lem:schur}; the Hilbert--Schmidt product formula is
Theorem~\ref{thm:HSproduct}; and the Calkin identification, essential
norm, and essential spectrum are Theorem~\ref{thm:calkin-main}. The
internal Toeplitz inclusion is Theorem~\ref{thm:internalT}.

It remains to upgrade membership from $\Toep$ to $\Comm$ for the whole
class. By Corollary~\ref{cor:barriahalmos}, $\varsigma(C)=0$. Because
$\Comp \subset \Comm = \ker\varsigma$, the symbol map factors through
the Calkin image of the Toeplitz algebra: there is a
$*$-homomorphism $\overline\varsigma : \pi(\Toep) \to L^\infty(\mathbb
T)$, $\overline\varsigma(\pi(A)) = \varsigma(A)$. From
Theorem~\ref{thm:calkin-main}, $\pi(W(\kappa)) = F_\kappa(\pi(C))$,
$F_\kappa(0)=0$. {A $*$-homomorphism commutes with continuous
functional calculus}, and $\overline\varsigma(\pi(C)) = \varsigma(C) =
0$; hence
\[
  \varsigma(W(\kappa)) = \overline\varsigma\bigl( F_\kappa(\pi(C)) \bigr)
  = F_\kappa\bigl( \overline\varsigma(\pi(C)) \bigr) = F_\kappa(0) = 0.
\]
Thus $W(\kappa)\in\Comm$.

Finally, apply the product formula to $(\kappa,\eta)$ and
$(\eta,\kappa)$ and subtract; since $\star$ is commutative, this gives
$[W(\kappa),W(\eta)]\in\mathcal S_2$. Taking $\eta=\kappa^\vee$ and
using $W(\kappa)^*=W(\kappa^\vee)$ gives the self-commutator
assertion.
\end{proof}

\begin{corollary}[Ces\`aro and Hilbert matrices]\label{cor:cesaro-hilbert}
The Ces\`aro operator belongs to the commutator ideal of the Toeplitz
algebra, and
\[
  \|C\|_{\mathrm{ess}} = 2, \qquad \spec_{\mathrm{ess}}(C) = \{w : |w-1|=1\}.
\]
For the classical Hilbert matrix $\Gamma$,
\[
  \|\Gamma\|_{\mathrm{ess}} = \pi, \qquad \spec_{\mathrm{ess}}(\Gamma) = [0,\pi].
\]
\end{corollary}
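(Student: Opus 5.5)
The plan is to read off both halves of the corollary from Theorem~\ref{thm:full-calculus}. For the essential norm and spectrum it suffices to compute the Mellin symbols and their ranges, using \eqref{eq:essresults}. The statement $C\in\Comm$ is Corollary~\ref{cor:barriahalmos}; equivalently it is item (iv) of Theorem~\ref{thm:full-calculus} with $\kappa=\omega_C\in\FBV_0$.

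For the Ces\`aro operator, \eqref{eq:symbol-C-Gamma} gives $\sigma_{\omega_C}(\xi)=(1/2+i\xi)^{-1}$. First, $|\sigma_{\omega_C}(\xi)|^2=(1/4+\xi^2)^{-1}$, whose supremum is $4$, attained at $\xi=0$. Hence $\|C\|_{\mathrm{ess}}=\|\sigma_{\omega_C}\|_\infty=2$. Second, by \eqref{eq:cesaro-symbol-geom} the range $\sigma_{\omega_C}(\mathbb R)$ is $\Sigma_C\setminus\{0\}$, so its closure is $\Sigma_C$. To check the circle directly, write $w=1/s$ with $\operatorname{Re}s=1/2$; then $|w-1|=|1-s|/|s|=1$, because $|1-s|=|\bar s|$. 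This recovers $\spec_{\mathrm{ess}}(C)=\Sigma_C$ and is consistent with \eqref{eq:essC}.

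For the Hilbert matrix, note that $\Gamma-W(\omega_\Gamma)\in\mathcal S_2$, as recorded after \eqref{eq:hilbert-shift}. The difference is compact, so $\pi(\Gamma)=\pi(W(\omega_\Gamma))$, and the essential norm and essential spectrum of $\Gamma$ coincide with those of $W(\omega_\Gamma)$. Since $\omega_\Gamma\in\FBV_0$, \eqref{eq:essresults} applies with $\sigma_{\omega_\Gamma}(\xi)=\pi\operatorname{sech}(\pi\xi)$. This function is real and even. It decreases on $[0,\infty)$ from its maximum $\pi$ at $\xi=0$ toward $0$. Thus $\|\Gamma\|_{\mathrm{ess}}=\pi$ and $\overline{\sigma_{\omega_\Gamma}(\mathbb R)}=\overline{(0,\pi]}=[0,\pi]$.

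Every step is bookkeeping on top of Theorem~\ref{thm:calkin-main}, so there is no real obstacle. The one point needing care is that essential quantities are invariant under compact perturbations: both the essential norm and the essential spectrum depend only on the Calkin class. This is what lets us replace $\Gamma$ by the shifted matrix $W(\omega_\Gamma)$.
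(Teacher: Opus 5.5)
Your proposal is correct and follows the paper's proof. Like the paper, you apply the essential norm and essential spectrum formulas of Theorem~\ref{thm:calkin-main} to $\omega_C$ and $\omega_\Gamma$, and use that $\Gamma-W(\omega_\Gamma)\in\mathcal S_2$ leaves the Calkin class unchanged. You also carry out the elementary computations of the suprema and ranges of the two symbols, which the paper leaves implicit.
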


\begin{proof}
For $C$, use $\omega_C \in \FBV_0$, $W(\omega_C)=C$, and
$\sigma_{\omega_C}(\xi) = (1/2+i\xi)^{-1}$. For the Hilbert matrix, use
the shifted kernel $\omega_\Gamma$, whose Mellin symbol is $\pi
\operatorname{sech}(\pi\xi)$; the shift from $(j+k+2)^{-1}$ to
$(j+k+1)^{-1}$ is Hilbert--Schmidt and hence invisible in the Calkin
algebra.
\end{proof}

Corollary~\ref{cor:cesaro-hilbert} is precisely the instance mentioned
in Section~\ref{sec:sang}: it is a family-level statement --- about
$\Gamma$, not about $C$ --- that follows immediately from
Theorem~\ref{thm:full-calculus} but has no counterpart in Sang's
single-operator approach.

\section{Concluding remarks}\label{sec:concluding}

We began with the Barr\'{\i}a--Halmos question of whether the
Ces\`aro operator belongs to the Toeplitz algebra. The proof led to a
larger structure: the sampled-ratio map $\kappa \mapsto W(\kappa)$ is
a discrete Mellin quantization whose failure of multiplicativity is
Hilbert--Schmidt. The regular part of the argument produces a Mellin
symbol calculus in the Calkin algebra, while the singular sawtooth
kernel provides the mechanism that connects this abstract calculus
with the Toeplitz algebra. The Ces\`aro operator is the kernel
$\mathbf 1_{(0,1]}$ inside this framework.

From this perspective, the Toeplitz membership of $C$ is both the
original problem and the first instance of the general theorem. The
quantitative lattice defect is the part of the construction that has
no analogue in the exact continuous Mellin model and is likely to
persist in other discrete settings. Natural questions include Schatten
refinements of the defect, extensions to $\ell^p$ and weighted
sequence spaces, and a localization theory that identifies the sampled
Mellin fibre directly from the Toeplitz side.

\section*{Methodology}
For the writing and preparation of this article, the authors used the large language models ChatGPT and Claude, including for proofreading and bibliographical research. The authors remain fully responsible for the content of the paper and adhere to the principles of the “Leiden Declaration on Artificial Intelligence and Mathematics.”



\begin{thebibliography}{99}

\bibitem{BH} J. Barr\'{\i}a and P. R. Halmos, \emph{Asymptotic
Toeplitz operators}, Trans.\ Amer.\ Math.\ Soc.\ \textbf{273} (1982),
no.\ 2, 621--630.

\bibitem{BS}C. Bellavita, G. Stylogianns, \emph{On asymptotic and essential Toeplitz and Hankel integral operator}, 	arXiv:2409.10014 [math.FA].

\bibitem{BDS} C. Bellavita, E.Dellepiane,  G. Stylogianns, \emph{Boundedness, compactness and Schatten class for Rhaly matrices}, J. Lond. Math. Soc. (2) 112 (2025), no. 4, Paper No. e70304, 36 pp.

\bibitem{BrownHalmos} A. Brown and P. R. Halmos, \emph{Algebraic
properties of Toeplitz operators}, J.\ Reine Angew.\ Math.\
\textbf{213} (1963/64), 89--102.

\bibitem{BHS} A. Brown, P. R. Halmos and A. L. Shields, \emph{Ces\`aro
operators}, Acta Sci.\ Math.\ (Szeged) \textbf{26} (1965), 125--137.

\bibitem{BSK} A. B\"ottcher, B. Silbermann and A. Yu. Karlovich,
\emph{Analysis of Toeplitz Operators}, 2nd ed., Springer, 2006.

\bibitem{Douglas} R. G. Douglas, \emph{Banach Algebra Techniques in
Operator Theory}, Pure and Applied Mathematics 49, Academic Press,
1972.

\bibitem{Duduchava} R. Duduchava, \emph{Integral Equations with Fixed
Singularities}, Teubner-Texte zur Mathematik, Leipzig, 1979.

\bibitem{DK03}H. Dym and V. Katsnelson, \emph{Contributions of Issai Schur to Analysis}, in Studies in Memory of Issai Schur, A. Joseph, A. Melnikov and R. Rentschler (eds.), Progress in Mathematics, Birkhäuser, 2003.

\bibitem{F} A. Feintuch,
\emph{On asymptotic Toeplitz and Hankel operators}, in The Gohberg Anniversary Collection: Volume I: The Calgary Conference and Matrix Theory Papers and Volume II: Topics in Analysis and Operator Theory, 
Birkh{\"a}user Basel, 1989,
733--746.


\bibitem{GK} I. Ts. Gohberg and N. Ya. Krupnik, \emph{On the algebra
generated by Toeplitz matrices}, Functional Anal.\ Appl.\ \textbf{3}
(1969), 119--127.

\bibitem{MR} J. Mashreghi and W. T. Ross, \emph{The Wonders of the
Ces\`aro Operator}, Operator Theory: Advances and Applications 311,
Birkh\"auser, Cham, 2026.

\bibitem{Plaschinsky} P. V. Plaschinsky, \emph{Discrete Mellin
convolution with dilation and its applications}, Math.\ Model.\
Anal.\ \textbf{3} (1998), no.\ 1, 160--167.

\bibitem{Rudin} W. Rudin, \emph{Fourier Analysis on Groups},
Interscience, 1962.

\bibitem{Sang} Y. Sang, \emph{The Ces\`aro operator is in the
Toeplitz algebra}, arXiv:2608.25740v1 [math.FA], 26 August 2026.

\bibitem{Schur04} I. Schur, \emph{Remarks on the theory of bounded bilinear forms with infinitely many variables }, Journ.\ für\ reine\ und\ angew.\ Math.\  \textbf{140} (1911), 1 --28.


\bibitem{S87} A. G. Siskakis, \emph{Composition semigroups and the Cesàro operator on  $H^p$},
J. London Math. Soc. (2) 36 (1987), no. 1, 153–164.


\bibitem{Young2004} 
 Scott W. Young,  \emph{Spectral properties of generalized Cesàro operators},
Integral Equations Operator Theory 50 (2004), no. 1, 129–146.
\end{thebibliography}
\end{document}